\documentclass[11pt,reqno]{amsart}
\usepackage{amssymb,mathrsfs,graphicx,enumerate}
\usepackage{amsmath,amsfonts,amssymb,amscd,amsthm,bbm}
\usepackage{subcaption}
\usepackage{colortbl}
\usepackage{mathtools}
\usepackage{cite}
\usepackage{hyperref}
\usepackage{upgreek}
\hypersetup{hyperindex=true}
\hypersetup{hidelinks}
\usepackage{makecell}
\allowdisplaybreaks
\graphicspath{{./figure/}}

\makeatletter
\@namedef{subjclassname@2020}{\textup{2020} Mathematics Subject Classification}
\makeatother

\title[Uniform stability of the KCS-type model ]{Uniform-in-time stability and mean-field limit of the Cucker--Smale-type model with noncompact support}
\author[Ha]{Seung-Yeal Ha}
\address[Seung-Yeal Ha]{\newline Department of Mathematical Sciences and Research Institute of Mathematics\newline Seoul National University, Seoul 08826, Republic of Korea}
\email{syha@snu.ac.kr}

\author[Wang]{Xinyu Wang}
\address[Xinyu Wang]{\newline Department of Mathematical Sciences \newline Seoul National University, Seoul 08826, Republic of Korea \newline School of Mathematics \newline Harbin Institute of Technology, Harbin  150001, People's Republic of China}
\email{wangxinyu97@snu.ac.kr}

\author[Yoon]{Wook Yoon}
\address[Wook Yoon]{\newline Department of Mathematics \newline Technical University of Munich, Garching bei M\"unchen 85748, Germany}
\email{ynwk178@gmail.com}

\newtheorem{theorem}{Theorem}[section]
\newtheorem{lemma}{Lemma}[section]

\newtheorem{proposition}{Proposition}[section]
\newtheorem{remark}{Remark}[section]

\newtheorem{definition}{Definition}[section]

\newcommand{\bbr}{\mathbb R}

\newcommand{\bbz}{\mathbb Z}
\newcommand{\e}{\varepsilon}

\newcommand{\bbn} {\mathbb N}

\newcommand{\bw}{\mbox{\boldmath $w$}}

\newcommand{\cC}{\mathcal C}
\newcommand{\cD}{\mathcal D}
\newcommand{\cI}{\mathcal I}

\newcommand{\cP}{\mathcal P}

\usepackage{verbatim}

\begin{document}


\subjclass[2020]{34D05,92D25,35Q83} \keywords{Cucker--Smale model, kinetic model, noncompact support, uniform-in-time mean-field limit, uniform-in-time stability}

\thanks{\textbf{Acknowledgment.} 
The work of S.-Y. Ha was supported by National Research Foundation (NRF) grant funded by the Korea government (MIST) (RS-2025-00514472). The work of X. Wang was supported by the Natural Science Foundation of China (grants 123B2003), the China Postdoctoral Science Foundation (grants 2025M774290), and Heilongjiang Province Postdoctoral Funding (grants  LBH-Z24167). All authors contributed equally to this work. }

\begin{abstract}
We study the uniform-in-time stability and mean-field limit of an infinite Cucker--Smale-type (ICS-type) model in a fully noncompact setting. First, we clarify the main difficulties arising from the original ICS model and its corresponding kinetic Cucker--Smale (KCS) model, when the spatial support is noncompact. In particular, we show that the velocity diameter may remain constant in time, which invalidates the classical approach based on position and velocity diameters. Moreover, we construct a counterexample showing that the original KCS model does not satisfy uniform-in-time stability in the noncompact spatial setting. To overcome these obstacles, we introduce a new ICS-type model whose communication weight depends on a pairwise spatial moment. Under suitable assumptions, we also establish the uniform-in-time stability of the ICS-type model with respect to initial data in the noncompact setting. Finally, we derive the uniform-in-time mean-field limit for the ICS-type model, together with uniform-in-time stability for the corresponding KCS-type model.
\end{abstract}
\maketitle 



\section{Introduction} \label{sec:1}

\setcounter{equation}{0} 
Understanding the mechanism of collective behaviors is a central topic in the study of complex systems \cite{Al,Buck,D-M2}.  For this, researchers proposed several particle models to describe collective dynamics, to name a few, the Winfree model \cite{Winfree},  the Kuramoto model \cite{Kuramoto}, the Vicsek model \cite{VICS-typeek}, the Hegselmann--Krause model \cite{HK},  the Cucker--Smale (in short, CS) model \cite{Cucker} and so on. Among them, the CS model \cite{Cucker,cucker2} and its corresponding kinetic model play a distinguished role due to their analytical treatment to capture velocity alignment via all-to-all interactions. A vast literature has been devoted to understanding the long-time behavior of CS-type models, including flocking, stability, and mean-field limits \cite{k4, kinetic1, kinetic4, kinetic5,k2, k12, k1, kinetic3} etc. When the initial datum is compactly supported in the position and velocity variables, the global well-posedness, flocking behavior,  and stability results for the kinetic CS (KCS) model have been extensively studied in \cite{ a2, k2,w2,w4,Ha2025,w7,k6,k1,HKZ}. Moreover, CS-type dynamics and its variants  have been extensively studied in literature, e.g., collision avoiding \cite{collision1,collision2}, hierarchical leadership \cite{leader,ex5}, rooted leadership flocking \cite{switch, L12}, stochastic flocking \cite{HQZ2022,ks1,ks2,CM-2008,DFT-2010}, multi-cluster flocking \cite{bi-cluster,bi-cluster2,bi-cluster3}, time-discrete flocking \cite{DS1,DS2}, infinite particle \cite{w1,w3,J1,w5,w11,HLR}, etc. 

In contrast to the well-studied stability and mean-field theory for compactly supported initial data, in this work we consider uniform-in-time stability and mean-field theory of the KCS-type model in a fully noncompact spatial-velocity setting. The study of noncompact initial data has a long history in  Vlasov-type equations, including the Vlasov--Poisson system \cite{C-Z-2016,L2016}, the Vlasov--Maxwell system \cite{D-L1989,S2004}, and related references. In particular, for kinetic flocking models with diffusion \cite{ks1,DFT-2010}, both spatial and velocity supports generally become noncompact for any positive time due to the diffusive effect. These observations indicate that the uniform-in-time stability and mean-field theory for the noncompact setting are both mathematically important and physically relevant. 

To set up the stage, we begin with the description of the original KCS model \cite{k1}. Let $f = f(t, x, v)$ be the one-particle distribution function, or simply kinetic density, for the KCS ensemble at phase space position $(x,v)\in\bbr^d\times\bbr^d$ and time $t$. Then, the temporal-phase space dynamics of $f$ is governed by the following Cauchy problem:
\begin{equation} \label{A-0}
	\begin{cases} 
		\displaystyle \partial_tf+ \nabla_x \cdot (v f) + \nabla_v \cdot (L[f]f)=0, \quad t > 0,~~(x, v) \in {\mathbb R}^{2d},  \vspace{6pt}\\
		\displaystyle L[f](t,x,v) :=-\kappa\int_{\mathbb{R}^{2d}} \phi(|x-x_*|_p) \left(v-v_*\right)  f(t,x_*,v_*) dx_* d v_*,\vspace{6pt} \\
		\displaystyle f \Big|_{t = 0}  =f^{\mathrm{in}},
	\end{cases}
\end{equation}
where $\phi$ is the communication weight defined by 
\[\phi(r) = \frac{1}{(1+r^2)^{\beta/2}},\quad\beta > 0.\] 
As far as the authors know, even finite time stability and mean-field theory for KCS dynamics in a fully noncompact setting, where both position and velocity distributions may have unbounded support, have not yet been systematically studied. This class includes Gaussian, sub-Gaussian, and other physically relevant profiles \cite{w8}. The existing theories typically rely either on compact velocity  support \cite{w6} or on spatial compactness \cite{HKZ,k2}.

Understanding uniform-in-time stability and the mean-field limit of KCS dynamics in such noncompact settings, therefore, remains a fundamental problem.  However, in such non-compact settings, a counterexample shows that the uniform-in-time stability estimate for the KCS model does not hold because the velocity diameter may remain as a constant (see Propositions \ref{P2.3}  and \ref{P2.6}). To overcome this difficulty, we introduce a modified alignment mechanism in which the communication weight depends on a pairwise spatial moment: 
\begin{equation} 
	\begin{cases} \label{A-1}
		\displaystyle \partial_tf+ \nabla_x \cdot (v f) + \nabla_v \cdot (L[f]f)=0, \quad t > 0,~~(x, v) \in {\mathbb R}^{2d},  \vspace{6pt}\\
		\displaystyle L[f](t,x,v) :=-\kappa\int_{\mathbb{R}^{2d}} \phi( \bw[f](t) ) \left(v-v_*\right)  f_*dx_* d v_*,\vspace{6pt} \\
				\displaystyle \bw[f](t) :=\int_{\mathbb{R}^{4d}}|x-x_*|_pff_*d x d vd x_* d v_*,\vspace{6pt} \\
		\displaystyle f \Big|_{t = 0}  =f^{\mathrm{in}}.
	\end{cases}
\end{equation}
Here, we use handy notations:
\[ f = f(t,x,v), \quad f_* = f(t, x_*, v_*), \quad x = (x^1, \cdots, x^d), \quad  |x|_p:=\left(\sum_{k=1}^d |x^k|^p\right)^{\frac{1}{p}}.\] 
Note that the KCS-type model \eqref{A-1} can be viewed as a variant of the original KCS model \eqref{A-0}. One transparent difference is that we employ a new ansatz for the communication weight function $\eqref{A-1}_3$ such that it depends on a common pairwise spatial moment, rather than on pairwise distances alone.

The above modification $\eqref{A-1}_3$ is conceptually related to several existing variants of the CS-type models that employ non-metric or distribution-dependent communication protocols. In the original KCS model \eqref{A-0}, the communication weight between two individuals depends only on their mutual distance, which assumes that the influence decays monotonically with respect to spatial separation. While this metric-based rule works well when the group is relatively concentrated, it becomes inadequate in situations where the spatial support is unbounded, as the interaction may vanish for far-apart individuals and the classical flocking analysis fails. However, empirical studies on animal groups, such as starling flocks and fish schools, suggest that interaction rules are often not purely metric \cite{08PNAS}. For instance, individuals may interact with a fixed number of nearest neighbours regardless of population density, the so-called topological interaction \cite{Haskovec2013}. Moreover, Motsch and Tadmor \cite{MT,MT22} proposed a normalized alignment model where the communication weight is renormalized by the total influence from all other particles, thereby removing the artificial dependence on the total population size and making the model more realistic in non-uniform spatial distributions. More recently, density-induced consensus protocols \cite{Mi2020M3AS} have been introduced, where agents only react to sufficiently dense crowds in their vicinity. Our model \eqref{A-1} shares a common idea with these approaches that the communication weight should depend not only on pairwise distances but also on properties of the overall particle distribution. In our case, this is achieved by introducing the global pairwise spatial moment $\eqref{A-1}_3$, which measures the spatial spread of the entire group. This global coupling provides a new physical mechanism: the influence between two individuals is modulated by how dispersed the whole flock is. Such a mechanism explains the scenarios that individuals may perceive the group's overall extent, and also enables rigorous mathematical treatment for initial data with many physically important distributions like Gaussian, sub-Gaussian, and so on.

Motivated by mean-field limit and emergent dynamics of CS-type model with noncompact support \cite{w6,w9,w8,w10}, we first consider the infinite CS-type (in short, ICS-type) model with a sender network topology in which the influence of each particle is registered by the fixed infinite nonnegative weight vector $(m_j)_{j \in {\mathbb N}}$ with the unit sum 
\[\sum_{j=1}^\infty m_{j}=1.\] 
In this setting, the Cauchy problem for the ICS-type model in a metric space $(\bbr^d, |\cdot|_p)$ reads as follows:
\begin{align}
	\begin{cases} \label{A-2}
		\displaystyle \frac{dx_i}{dt} = v_i, \quad t>0,\quad i \in {\mathbb N},\vspace{6pt}\\
		\displaystyle\frac{dv_i}{dt} = \kappa\sum_{j=1}^\infty m_{j}\phi(\bw(X))(v_j-v_i),\vspace{6pt}\\
		\displaystyle\bw(X)=\sum_{i,j=1}^\infty m_im_j|x_j-x_i|_p,	\vspace{6pt}\\
		\displaystyle	(x_i,v_i) \Big|_{t = 0} =(x_i^{\mathrm{in}},v_i^{\mathrm{in}}).
	\end{cases}
\end{align}
Here, the infinite state and mixed norm are defined as follows:
\begin{align*}
	X:~\{x_i\}_{i\in\bbn}, (x_1,x_2,\cdots), \quad  \|X\|_{q,p}:= \left(\sum_{i=1}^\infty m_i|x_i|_p^q\right)^{\frac{1}{q}},\quad \|X\|_{p}:=\|X\|_{p,p}.
\end{align*}
Note that the KCS-type model \eqref{A-1} corresponds to the kinetic model which can be obtained from \eqref{A-2} via the mean-field limit. In fact, the empirical measure generated by the solution to \eqref{A-2} is a weak solution to the KCS-type model \eqref{A-1}. A central feature of our framework is that the initial spatial configuration is not assumed to be bounded, so particles are not confined to a compact spatial domain. In this setting, classical flocking and stability arguments based on the uniform contraction of the velocity diameter cannot be applied. These phenomena create fundamental obstructions in the analysis of long-time stability. Therefore, we are interested in the following questions:
\vspace{0.1cm}
\begin{quote}
``Is this KCS-type model \eqref{A-1} uniformly stable in the noncompact setting? If so, under what conditions on system parameters, and initial datum with noncompact support, can one show the emergent dynamics for \eqref{A-1}?"
\end{quote}
\vspace{0.1cm}
In this work, we answer the above questions in an affirmative manner. First, we recall the notions of Wasserstein distance and uniform(-in-time) stability for the KCS-type model.
\begin{definition} \label{D1.1}
\begin{enumerate}
\item The Wasserstein distance $W_{q,p}$ between two measures $\mu, \nu\in\mathcal{P}(\bbr^{2d})$ is defined as follows: for $p\in[1,\infty]$ and $q\in[1,\infty)$, $W_{q,p}$ is defined as
\begin{equation*}
z = (x,v), \quad	W_{q,p}(\mu,\nu):=\inf_{\pi\in\Pi(\mu,\nu)}\left(\int_{\bbr^{2d}\times\bbr^{2d}}|z-z^*|_p^q\pi(dz,dz^*)\right)^{\frac{1}{q}},\quad W_p(\mu,\nu):=W_{p,p}(\mu,\nu).
\end{equation*}
\item
The Cauchy problem \eqref{A-1} is uniformly stable with respect to initial datum if and only if for measure-valued solutions $(\mu_t, \nu_t)$ to \eqref{A-1} with the corresponding initial data $(\mu_0, \nu_0)$, there exists a positive constant $C$ independent of $t$ such that 
\[ \sup_{0 \leq t < \infty} W_p(\mu_t, \nu_t)\leq C W_{p}(\mu_0, \nu_0).\]
\end{enumerate}
\end{definition}
\noindent Note that in a compact setting, uniform-in-time stability of the KCS model has already been studied in \cite{HKZ}. Thus, the above question asks whether we can extend the compact-support theory to the noncompact setting. \newline

The main results of this paper are four-fold. First, we present an example showing that, in the classical ICS model proposed in \cite{w1}, the velocity diameter may remain invariant in time in the noncompact spatial setting (see Proposition \ref{P2.3}). Moreover, we construct a counterexample showing that the original KCS model fails to satisfy uniform-in-time stability when the spatial support is noncompact (see Proposition \ref{P2.6}). These examples show that, in noncompact settings, a strengthened communication weight is needed to establish uniform-in-time stability of KCS-type dynamics.

Second, we establish uniform-in-time stability for the ICS-type model \eqref{A-2} with the sender network in a fully noncompact regime. Under suitable assumptions on the communication kernel and the initial configuration:
\begin{align*}
	&\sum_{i=1}^\infty m_i v_i^{\mathrm{in}}=\sum_{i=1}^\infty m_i \bar{v}_i^{\mathrm{in}}=0,\quad \kappa>\frac{4}{\tilde{c}_p}\max \left\{\frac{\|V^{\mathrm{in}}\|_{p}}{\int_{2\|X^{\mathrm{in}}\|_{p}}^\infty \phi(s)ds},\frac{\|\bar{V}^{\mathrm{in}}\|_{p}}{\int_{2\|\bar{X}^{\mathrm{in}}\|_{p}}^\infty\phi(s)ds}\right\},\quad\tilde{c}_p=\frac{2^{2-p}}{p},
\end{align*} 
we derive uniform-in-time stability estimates for global ICS-type solutions with respect to perturbations of the initial data (see Theorem \ref{T2.1}): there exists a positive constant $C$, independent of $t$, such that 
\begin{equation*} \label{A-3}
\sup_{0 \leq t < \infty} \Big( \|X(t)-\bar{X}(t)\|_{p}+\|V(t)-\bar{V}(t)\|_{p} \Big) \le C \Big (\|X^{\mathrm{in}}-\bar{X}^{\mathrm{in}}\|_{p}+\|V^{\mathrm{in}}-\bar{V}^{\mathrm{in}}\|_{p} \Big ).
\end{equation*} 
Here, $(X(t), V(t))$, $(\bar{X}(t), \bar{V}(t))$ are solutions of ICS-type model \eqref{A-2} with initial data $(X^{\mathrm{in}}, V^{\mathrm{in}})$, $(\bar{X}^{\mathrm{in}}, \bar{V}^{\mathrm{in}})$, respectively. 

Third, as a consequence of uniform-in-time stability of ICS-type model, we also establish the uniform-in-time mean-field limit for the associated KCS-type model \eqref{A-1} via particle-in-cell method. More precisely, we assume the initial data satisfy 
\begin{equation*}
	\int_{\bbr^{2d}} \Big( |x|_p^p+|v|_p^p \Big)~ \mu_0(dx,dv)<\infty,\quad \int_{\bbr^{2d}}v~\mu_0(dx,dv)=0,
	\quad \|V_{\mu_0}\|_{p}< \frac{\kappa \tilde{c}_p}{4} \int_{2\|X_{\mu_0}\|_{p}}^{\infty}\phi(s)d s.
\end{equation*}
Here, \begin{align*}
\|X_{\mu_0}\|_{p}:=\left[\int_{\bbr^{2d}}|x|_p^p~ \mu_0(dx,dv)\right]^{1/p} \qquad \text{and}\qquad 	\|V_{\mu_0}\|_{p}:=\left[\int_{\bbr^{2d}}|v|_p^p~ \mu_0(dx,dv)\right]^{1/p}.
\end{align*}
Then, the solution $\mu_t$ to the KCS-type model \eqref{A-1} can be approximated by $\mu_t^n$, the measure derived by the ICS-type model \eqref{A-2}, in Wasserstein distance uniformly in time (see Theorem \ref{T2.2}):
\[\lim_{n\to\infty} \sup_{0 \leq t <\infty}W_p(\mu_t^n,\mu_t)=0.\]
\newline

Fourth, let $\mu_t$ and $\nu_t$ be global measure-valued solutions to \eqref{A-1} corresponding to initial data $(\mu_0, \nu_0)$ satisfying the following set of conditions:
	\begin{align*}
	\begin{aligned}
		&\int_{\bbr^{2d}} \Big( |x|_p^p+|v|_p^p \Big)~ \mu_0(dx,dv)<\infty,\quad \int_{\bbr^{2d}} \Big( |x|_p^p+|v|_p^p \Big)~ \nu_0(dx,dv)<\infty,\\
		&	\int_{\bbr^{2d}}v~ \mu_0(dx,dv)=\int_{\bbr^{2d}}v~ \nu_0(dx,dv)=0,\quad \kappa>\frac{4}{\tilde{c}_p}\max \left\{\frac{\|V_{\mu_0}\|_{p}}{\int_{2\|X_{\mu_0}\|_{p}}^\infty \phi(s)ds},\frac{\|V_{\nu_0}\|_{p}}{\int_{2\|X_{\nu_0}\|_{p}}^\infty\phi(s)ds}\right\}.
	\end{aligned}
\end{align*}
Then, there exists a positive constant $C$ independent of $t$ such that 
\[ \sup_{0 \leq t < \infty} W_p(\mu_t, \nu_t)\leq C W_{p}(\mu_0, \nu_0).\]
For further details, we refer to Theorem \ref{T2.3}. \newline

The rest of this paper is organized as follows. In Section~\ref{sec:2}, we first recall previous results on the emergent dynamics of the original ICS model, and then present examples illustrating main difficulties in a noncompact setting. We also state our main results for the new ICS-type model and KCS-type model. In Section~\ref{sec:3},  we establish the uniform-in-time stability of the  ICS-type model with a sender network. In Section~\ref{sec:4}, we derive the uniform-in-time mean-field limit and uniform-in-time stability for the KCS-type equation. Finally, Section \ref{sec:5} is devoted to a brief summary of the main results and a discussion of some remaining issues for future work. In appendices, we provide proofs for Lemma \ref{L2.1}, Proposition \ref{P2.4} and Proposition \ref{P3.1}.

\vspace{.3cm}
\noindent\textbf{Gallery of Notation:} For notational simplicity, we set the infinite position and velocity vectors  as follows:
	\begin{align*}
		&X:~\{x_i\}_{i\in\bbn},~~(x_1,x_2,\cdots),\quad V:~\{v_i\}_{i\in\bbn},~~ (v_1,v_2,\cdots),\\
		&x_i=(x_i^1,x_i^2,\cdots,x_i^d)\in\bbr^d,\quad v_i=(v_i^1,v_i^2,\cdots, v_i^d)\in\bbr^d, \quad i \in {\mathbb N}.
	\end{align*}
	 We use $|\cdot|_p$ to denote the $\ell^p$-norm of a vector in $\bbr^d$, and define the mixed $(q,p)$-norms using $\{m_i\}_{i=1}^\infty$: 
\begin{align*}
	|x_i|_p&:=\left(\sum_{k=1}^d |x_i^k|^p\right)^{\frac{1}{p}},\quad \|X\|_{q,p}:= \left(\sum_{i=1}^\infty m_i|x_i|_p^q\right)^{\frac{1}{q}},\quad \|X\|_{p}:=\|X\|_{p,p},\quad \|X\|_{\infty,p} :=\sup_{i\in\bbn}|x_i|_p.
	\end{align*}
	Note that since $\sum_{i=1}^\infty m_i=1$, Jensen's inequality gives
\[\|X\|_{q_1,p}\leq \|X\|_{q_2,p}\quad \text{for}\quad0<q_1\leq q_2\leq \infty. \]
We define the infinite dimensional vector space $\ell^{q,p}$ as 
\begin{equation*}
	\ell^{q,p}:=\{X=(x_1,x_2,\cdots):\|X\|_{q,p}<\infty\}.
\end{equation*}
Then, we have
\begin{equation*}
	\ell^{q_2,p}\subset \ell^{q_1,p}\quad \text{for} \quad 0<q_1\leq q_2\leq \infty.
\end{equation*}
Let $\mu\in\cP(\bbr^d\times \bbr^d)$ be a probability measure on position-velocity space. For $p,q\in[1,\infty)$, we define
\begin{equation*}
	\|X_{\mu}\|_{q,p}:=\left[\int_{\bbr^{2d}}|x|_p^q~ \mu(dx,dv)\right]^{1/q}, \qquad 	\|V_{\mu}\|_{q,p}:=\left[\int_{\bbr^{2d}}|v|_p^q~ \mu(dx,dv)\right]^{1/q}.
\end{equation*}
If $p=q$, we write
\[  \|X_{\mu}\|_p:=\|X_\mu\|_{p,p}, \quad \|V_{\mu}\|_p:=\|V_\mu\|_{p,p}. \]

\vspace{.3cm}
\noindent For two functions $f,g$, we write $f\lesssim g$ if and only if there exists a positive constant $C>0$ such that $f\leq Cg$. We also write 
\[ f\simeq g \quad \mbox{if and only if}\quad f\lesssim g \quad \mbox{and} \quad g\lesssim f. \] 
\section{Preliminaries} \label{sec:2}
\setcounter{equation}{0} 
In this section, we first recall previous results and study several properties of the ICS model \cite{w1}. We then present examples that illustrate the main difficulties in the noncompact setting: specifically, we show that the velocity diameter of the ICS model may remain constant in time, and we construct a counterexample demonstrating that the original KCS model fails to satisfy the uniform-in-time stability. Finally, we summarize our main results for the new ICS-type model.
	\subsection{The infinite Cucker--Smale model}\label{sec:2.1}
	Consider the Cauchy problem for the infinite Cucker--Smale model in a metric space $(\bbr^d, |\cdot|_p)$:
\begin{align} \label{B-1}
	\begin{cases}
		\displaystyle \frac{dx_i}{dt} = v_i, \quad t>0,\quad i\in {\mathbb N}, \vspace{6pt}\\
		\displaystyle\frac{dv_i}{dt} = \kappa\sum_{j=1}^\infty m_{ij}\phi(|x_j-x_i|_p)(v_j-v_i),\vspace{6pt}\\
		(x_i,v_i) \Big|_{t = 0} =(x_i^{\mathrm{in}},v_i^{\mathrm{in}}),
	\end{cases}
\end{align}
where $\phi$ is a communication weight function which takes the following explicit ansatz:
\[\phi(r)=\frac{1}{(1+r^2)^{\beta/2}}, \quad \beta > 0. \]
Next, we focus on the ICS model with the sender network, i.e., $m_{ij}=m_j$ for all $i,j\in\bbn$:
\begin{align}
	\begin{cases} \label{B-2}
		\displaystyle \frac{dx_i}{dt} = v_i, \quad t>0,\quad i \in {\mathbb N},\vspace{6pt}\\
		\displaystyle\frac{dv_i}{dt} = \kappa\sum_{j=1}^\infty m_{j}\phi(|x_j-x_i|_p)(v_j-v_i),\vspace{6pt}\\
		(x_i,v_i) \Big|_{t =0} =(x_i^{\mathrm{in}},v_i^{\mathrm{in}}).
	\end{cases}
\end{align}
First, we recall the global well-posedness of \eqref{B-2}.
\begin{proposition} 
\emph{\cite{w1,w3,w5}} \label{P2.1}
	For $p\geq 2$, we assume that initial data $(X^{\mathrm{in}},V^{\mathrm{in}})$ satisfies 
	\[(X^{\mathrm{in}},V^{\mathrm{in}})\in\ell^{p,p}\times\ell^{\infty,p}.\] Then there exists a unique global solution $(X(t), V(t))$ to \eqref{B-2} such that 
\[  (X(t),V(t))\in \ell^{p,p}\times\ell^{\infty,p}, \quad   \|V(t)\|_{\infty,p}\leq \|V^{\mathrm{in}}\|_{\infty,p}\quad \forall~t\geq 0. \]
\end{proposition}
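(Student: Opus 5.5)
We treat \eqref{B-2} as an ODE on the Banach space $\ell^{p,p}\times\ell^{\infty,p}$. We first obtain a local solution by a Picard/Banach fixed-point argument. We then extend it globally using an a priori maximum principle for velocities, which also gives the bound $\|V(t)\|_{\infty,p}\le \|V^{\mathrm{in}}\|_{\infty,p}$.

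\textbf{Step 1 (well-defined, locally Lipschitz vector field).} Write the right-hand side as $F(X,V)=(V,G(X,V))$ with
\[
G_i(X,V)=\kappa\sum_{j} m_j\phi(|x_j-x_i|_p)(v_j-v_i).
\]
Since $0<\phi\le 1$ and $\sum_j m_j=1$, we get $|G_i|_p\le 2\kappa\|V\|_{\infty,p}$, so $G$ maps into $\ell^{\infty,p}$. Also $\|V\|_{p,p}\le\|V\|_{\infty,p}$, so the position component $V$ lies in $\ell^{p,p}$.

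For Lipschitz continuity we use that $\phi$ is globally Lipschitz, with constant $L_\phi$, and that $\big||a|_p-|b|_p\big|\le|a-b|_p$. This gives
\[
|G_i(X,V)-G_i(\bar X,\bar V)|_p\le 2\kappa\|V-\bar V\|_{\infty,p}+2\kappa L_\phi\|V\|_{\infty,p}\sum_j m_j\big(|x_j-\bar x_j|_p+|x_i-\bar x_i|_p\big).
\]
The sum over $j$ is controlled by $\|X-\bar X\|_{1,p}\le\|X-\bar X\|_{p,p}$. The difficulty is the term $|x_i-\bar x_i|_p$, which must be bounded uniformly in $i$ in $\ell^\infty$, and $\ell^{p,p}$ does not control this.

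To fix it, we work with the displacement $Y=X-X^{\mathrm{in}}$ in $\ell^{\infty,p}$ rather than with $X$ itself. Since $\dot x_i=v_i$ and $V\in\ell^{\infty,p}$, along solutions $\sup_i|x_i(t)-x_i^{\mathrm{in}}|_p\le t\sup_s\|V(s)\|_{\infty,p}$. So we run the fixed point in the space
\[
\{(Y,V)\in\ell^{\infty,p}\times\ell^{\infty,p}\},\qquad X=X^{\mathrm{in}}+Y.
\]
There the map is Lipschitz on bounded sets: the difference of $x$'s is exactly the difference of $Y$'s, which is bounded in sup. Then $X(t)\in\ell^{p,p}$ follows from $X^{\mathrm{in}}\in\ell^{p,p}$ and $Y$ bounded. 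Picard iteration gives a unique local solution on $[0,T_*)$, where $T_*$ depends only on $\|V^{\mathrm{in}}\|_{\infty,p}$.

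\textbf{Step 2 (maximum principle).} This step is the main obstacle, and it is where $p\ge2$ is used. Consider $\frac{d}{dt}|v_i|_p^p$. For $p\ge2$ the map $v\mapsto|v|_p^p$ is $C^1$, and
\[
\frac{d}{dt}|v_i|_p^p=p\kappa\sum_j m_j\phi_{ij}\sum_k |v_i^k|^{p-2}v_i^k\,(v_j^k-v_i^k).
\]
By H\"older, $\sum_k|v_i^k|^{p-2}v_i^kv_j^k\le|v_i|_p^{p-1}|v_j|_p$. Hence
\[
\frac{d}{dt}|v_i|_p^p\le p\kappa\sum_j m_j\phi_{ij}|v_i|_p^{p-1}\big(|v_j|_p-|v_i|_p\big)\le p\kappa|v_i|_p^{p-1}\big(M(t)-|v_i|_p\big)\sum_j m_j\phi_{ij},
\]
where $M(t)=\|V(t)\|_{\infty,p}$.

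With infinitely many particles the supremum need not be attained, so a pointwise argument at a maximizer is unavailable. Instead we argue by contradiction on the set where $|v_i|_p>\|V^{\mathrm{in}}\|_{\infty,p}+\varepsilon$, or more cleanly as follows. Using the bound above and Gr\"onwall uniformly in $i$, show that $M(t)$ is Lipschitz (since $|\dot v_i|_p\le 2\kappa M$) with upper Dini derivative $\le0$. The idea is to take near-maximizing indices $i_n$ with $|v_{i_n}(t)|_p\to M(t)$: their growth rate is at most of order $(M-|v_{i_n}|_p)\to0$. This gives $M(t)\le M(0)$.

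\textbf{Step 3 (continuation).} Because $\|V(t)\|_{\infty,p}$ stays bounded by $\|V^{\mathrm{in}}\|_{\infty,p}$ and $\|Y(t)\|_{\infty,p}\le t\|V^{\mathrm{in}}\|_{\infty,p}$ grows at most linearly, the local existence time from Step 1 is uniform. Iterating therefore yields a global solution. Uniqueness follows from the local Lipschitz property together with Gr\"onwall.
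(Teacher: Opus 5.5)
The paper gives no proof of Proposition \ref{P2.1}; it quotes the result from \cite{w1,w3,w5}. There is therefore no in-paper argument to match, and your proposal is a correct, self-contained alternative.

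The closest in-paper analogue is Remark \ref{NewR2.7}. There the modified model \eqref{B-12} is handled by Cauchy--Lipschitz directly in $\ell^{p,p}\times\ell^{p,p}$. That works because its weight $\phi(\bw(X))$ is a single scalar, Lipschitz in $\|X-\bar X\|_{1,p}$.

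For \eqref{B-2}, the pairwise weights produce exactly the term $|x_i-\bar x_i|_p$ you isolated. The norm $\ell^{p,p}$ does not control this uniformly in $i$, and $X$ itself need not lie in $\ell^{\infty,p}$. Passing to the displacement $Y=X-X^{\mathrm{in}}\in\ell^{\infty,p}$ is the right fix. The field $(V,G(X^{\mathrm{in}}+Y,V))$ is then Lipschitz on sets where $\|V\|_{\infty,p}$ is bounded, with constants independent of $\|Y\|_{\infty,p}$. Consequently:
\begin{itemize}
\item the local existence time depends only on $\|V\|_{\infty,p}$, so Step 3 goes through (the linear growth of the field would already exclude blow-up);
\item $X(t)\in\ell^{p,p}$ follows from $\|X(t)\|_{p,p}\le\|X^{\mathrm{in}}\|_{p,p}+\|Y(t)\|_{\infty,p}$.
\end{itemize}

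\textbf{Step 2 needs tightening.} The near-maximizer heuristic does not by itself bound the upper Dini derivative of $M$. The value $M(t+h)$ is a supremum over all indices at time $t+h$, and the nearly maximal indices may change with $h$. What closes the argument is the uniform-in-$i$ Gr\"onwall you mention.

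Since $0<\sum_j m_j\phi_{ij}\le 1$ and $M-|v_i|_p\ge 0$, your inequality gives, for every $i$ and a.e.\ $t$,
\[
\frac{d}{dt}|v_i|_p\le\kappa\big(M(t)-|v_i|_p\big),\qquad\text{hence}\qquad e^{\kappa t}|v_i(t)|_p\le |v_i^{\mathrm{in}}|_p+\kappa\int_0^t e^{\kappa s}M(s)\,ds .
\]
At times where $v_i=0$, the derivative of the Lipschitz function $|v_i|_p$ is $0$ whenever it exists, so the inequality still holds there.

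Now take $\sup_i$. The function $N(t):=e^{\kappa t}M(t)$ satisfies $N(t)\le M(0)+\kappa\int_0^t N(s)\,ds$. Gr\"onwall gives $N(t)\le M(0)e^{\kappa t}$, i.e.\ $M(t)\le M(0)$. No maximizers or Dini derivatives are needed.

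\textbf{Step 2 is also not where $p\ge 2$ is used.} Your computation needs only $p>1$. In fact every norm ball is invariant, because $\dot v_i=\kappa a_i(\hat v_i-v_i)$. Here $a_i:=\sum_j m_j\phi_{ij}\in(0,1]$, and $\hat v_i:=a_i^{-1}\sum_j m_j\phi_{ij}v_j$ is a convex combination of the $v_j$.
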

\begin{remark}\label{R2.1}
	If $X^{\mathrm{in}}\in\ell^{q,p}$ instead of $X^{\mathrm{in}}\in\ell^{p,p}$, we can still have a global well-posedness, where $X(t)\in\ell^{q,p}$ for all $t\geq0$. See Lemma \ref{L2.1}. 
	\end{remark}
In what follows, we mainly focus on the situation in which the initial velocities of particles are contained in a compact set, while the initial positions are not:
\[ X^{\mathrm{in}}\notin \ell^{\infty,p},\quad V^{\mathrm{in}}\in \ell^{\infty,p}.\]
and set 
\[ {\mathcal P}_\infty :=  \sup_{0 \leq t < \infty} \sup_{i \in {\mathbb N}} |v_i|_p=\|V^{\mathrm{in}}\|_{\infty,p} \]
as the uniform upper bound of $\|V(t)\|_{\infty,p}$. Since Proposition \ref{P2.1} gives
\begin{equation*}
	|x_i(t) - x_i^{\mathrm{in}}|_p\leq \int_0^t |v_i(s)|_p ds\leq {\mathcal P}_{\infty}t, 
\end{equation*}
we have 
\begin{equation*}
	|x_i(t)|_p\leq |x_i^{\mathrm{in}}|_p+ {\mathcal P}_{\infty}t,
\end{equation*}
and therefore
\[ x_i(t)\in \bbr^d \quad \mbox{for every $t\geq 0$, $i\in\bbn$}. \]
However, since 
\begin{equation*}
	\sup_{i\in\bbn} |x_i(t)|_p\geq \sup_{i\in\bbn} |x_i^{\mathrm{in}}|_p- {\mathcal P}_{\infty}t=\infty,
\end{equation*}
we have 
\[ X(t)\notin \ell^{\infty,p} \quad \mbox{for all $t\geq 0$}. \]
This implies that the positions of particles cannot be contained in a compact set for any time $t\geq 0$. 

\subsection{Basic estimates} \label{sec:2.2}
In this subsection, we study several basic properties of the solution $(X(t), V(t))$ to \eqref{B-2}.
\begin{lemma} \label{L2.1}
Let $(X,V)$ be a global solution to  \eqref{B-2} with initial data satisfying the following summability conditions:
\[ (X^{\mathrm{in}},V^{\mathrm{in}})\in\ell^{q,p}\times \ell^{\infty,p}, \quad \mbox{for $p, q \in [1, \infty]$}. \]
Then, the following assertions hold.
\begin{enumerate}
\item
Total weighted momentum is conserved along \eqref{B-2}:
\[\sum_{i=1}^\infty m_iv_i(t)=\sum_{i=1}^\infty m_i v_i^{\mathrm{in}}, \quad t \geq 0. \]
\item
The functionals $ \|X(t)\|_{q,p}$ and $ \|V(t)\|_{q,p}$ satisfy 
\[ \|V(t)\|_{q,p} \leq \|V^{\mathrm{in}} \|_{q,p}, \quad \|X(t)\|_{q,p}\leq \|X^{\mathrm{in}}\|_{q,p}+t\|V^{\mathrm{in}}\|_{q,p}, \quad t \geq 0. \]
\item
If the initial spatial configuration $X^{\mathrm{in}}$ satisfies 
\[ \sum_{i=1}^\infty m_ie^{\alpha |x_i^{\mathrm{in}}|_p}<\infty \]
for some constant $\alpha>0$, then we have
\[ \sum_{i=1}^\infty m_i e^{\alpha |x_i(t)|_p}\leq e^{\alpha \cP_\infty t}\sum_{i=1}^\infty m_i e^{\alpha |x_i^{\mathrm{in}}|_p}, \quad t \geq 0. \]
\end{enumerate}
\end{lemma}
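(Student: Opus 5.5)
Throughout, I work with the global solution from Proposition \ref{P2.1} and Remark \ref{R2.1}. The key structural fact is that the sender-network weights are symmetric under exchange: the kernel $\phi(|x_j-x_i|_p)$ is symmetric in $(i,j)$ and the weights factor as $m_im_j$. The uniform bound $\|V(t)\|_{\infty,p}\le \cP_\infty$ makes every double series below absolutely convergent, uniformly on compact time intervals, since $0<\phi\le 1$ and $\sum_j m_j=1$. This justifies interchanging summation with differentiation in time. The differentiation can also be done in integrated form, which avoids term-by-term differentiation issues.

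\emph{Step (1).} I multiply the velocity equation by $m_i$ and sum over $i$:
\[
\frac{d}{dt}\sum_i m_iv_i=\kappa\sum_{i,j}m_im_j\phi(|x_j-x_i|_p)(v_j-v_i).
\]
The summand is antisymmetric under $i\leftrightarrow j$, and the series converges absolutely because it is bounded by $2\kappa\cP_\infty$. Hence the right-hand side vanishes, and the total weighted momentum is conserved.

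\emph{Step (2).} The key point is a contraction property of the velocity map. I write the velocity equation as
\[
\dot v_i=-\kappa a_i v_i+\kappa\sum_j m_j\phi_{ij}v_j, \qquad a_i:=\sum_j m_j\phi_{ij}\le 1,
\]
where $\phi_{ij}:=\phi(|x_j-x_i|_p)$. Using the integrating factor, for $h>0$ this gives
\[
v_i(t+h)\approx (1-\kappa a_ih)v_i(t)+\kappa h\sum_j m_j\phi_{ij}v_j(t),
\]
which is a convex combination for small $h$. For $q<\infty$, Jensen's inequality then yields
\[
|v_i(t+h)|_p^q\le (1-\kappa a_ih)|v_i|_p^q+\kappa h\sum_jm_j\phi_{ij}|v_j|_p^q+o(h).
\]
Multiplying by $m_i$ and summing, the symmetry of $m_im_j\phi_{ij}$ makes the gain and loss terms cancel. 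Therefore $\frac{d^+}{dt}\|V\|_{q,p}^q\le 0$. Equivalently, one can use $\frac{d}{dt}|v_i|_p^q=q|v_i|_p^{q-p}\sum_k|v_i^k|^{p-2}v_i^k\dot v_i^k$ together with the estimate $\langle \partial|v_i|^q, v_j\rangle\le |v_i|^{q-1}|v_j|\le \frac{q-1}{q}|v_i|^q+\frac1q|v_j|^q$ and the same symmetrization. The case $q=\infty$ is Proposition \ref{P2.1}. For positions, I integrate $x_i(t)=x_i^{\mathrm{in}}+\int_0^tv_i$ and apply Minkowski's inequality in the weighted $\ell^{q}(m)$ norm, together with the integral form of Minkowski's inequality:
\[
\|X(t)\|_{q,p}\le\|X^{\mathrm{in}}\|_{q,p}+\int_0^t\|V(s)\|_{q,p}\,ds\le \|X^{\mathrm{in}}\|_{q,p}+t\|V^{\mathrm{in}}\|_{q,p}.
\]

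\emph{Step (3).} From $|x_i(t)|_p\le|x_i^{\mathrm{in}}|_p+\cP_\infty t$, established in Section \ref{sec:2.1}, I get $e^{\alpha|x_i(t)|_p}\le e^{\alpha\cP_\infty t}e^{\alpha|x_i^{\mathrm{in}}|_p}$. Multiplying by $m_i$ and summing gives the claim.

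The main obstacle is Step (2): making the differentiation of $|v_i|_p^q$ rigorous. This is delicate at $v_i=0$ and for $p<2$, where the $\ell^p$ norm is nondifferentiable. It also requires justifying the interchange of the infinite sum with the time derivative. I would handle both issues with the time-discrete convex-combination argument above, which needs only convexity of $|\cdot|_p^q$ and the uniform bounds. I would then pass $h\to0$ using Lipschitz continuity of each $v_i$ (with constant $2\kappa\cP_\infty$) and dominated convergence for the sums, since $\sum_i m_i|v_i|_p^q\le\cP_\infty^q$.
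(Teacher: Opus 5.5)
Your proof is correct. Part (1) is the paper's symmetrization argument. For (2) and (3), you use integrated or pointwise arguments where the paper differentiates.

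\begin{itemize}
\item \emph{Velocity bound.} The paper differentiates $\sum_i m_iG(v_i)$ with $G=|\cdot|_p^q$ and symmetrizes to $-\frac{\kappa}{2}\sum_{i,j}m_im_j\phi_{ij}(\nabla G(v_i)-\nabla G(v_j))\cdot(v_i-v_j)\le 0$, using monotonicity of $\nabla G$. You use the same convexity in Jensen form on a first-order time step, so the symmetric gain and loss terms cancel.
\item \emph{Position bound.} The paper differentiates $\|X\|_{q,p}$, applies H\"older twice, and divides by $q\|X\|_{q,p}^{q-1}$. You apply Minkowski to $x_i(t)=x_i^{\mathrm{in}}+\int_0^t v_i\,ds$.
\item \emph{Part (3).} The paper runs Gr\"onwall on $\sum_i m_ie^{\alpha|x_i|_p}$. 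You observe that the bound already holds term by term from $|x_i(t)|_p\le|x_i^{\mathrm{in}}|_p+\cP_\infty t$.
\end{itemize}

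Your route holds up better over the full stated range $p,q\in[1,\infty]$:
\begin{itemize}
\item It never differentiates $|\cdot|_p$ or $|\cdot|_p^q$, which are nonsmooth at the origin, and elsewhere when $p\in\{1,\infty\}$.
\item It never divides by $\|X\|_{q,p}^{q-1}$, which may vanish.
\item It justifies exchanging infinite sums and limits explicitly, by dominated convergence.
\end{itemize}
If you apply your Minkowski bound on $[s,t]$ in both directions, you also recover the a.e.\ inequality $|\frac{d}{dt}\|X\|_{q,p}|\le\|V\|_{q,p}$, which the paper reuses in Lemma \ref{L3.1}.

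The paper's gradient formulation has its own advantage: it upgrades directly to quantitative dissipation. Replacing monotonicity of $\nabla G$ by the coercivity estimate of Lemma \ref{L2.3} gives the decay rates in Proposition \ref{P2.4} and the exponential decay in Lemma \ref{L3.1}. The Jensen step alone does not directly produce those rates.
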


\begin{proof}
Since the proofs are similar to those in \cite{w8,w10}, we leave them in Appendix \ref{App-A}.
\end{proof}
\begin{remark} \label{R2.2}
As a corollary of Lemma \ref{L2.1}, we show that the model \eqref{B-1} is invariant. For this, we define new observables $(Y,U)$ where $Y= \{y_i\}_{i=1}^\infty,  U=\{u_i\}_{i=1}^\infty$:
	\begin{equation*}
	v_{0} := \sum_{i=1}^\infty m_i v_i^{\mathrm{in}},\quad u_i(t):=v_i(t) - v_0, \quad  y_i(t):= x_i(t) - v_0t.
	\end{equation*}
	Then (Y, U) satisfies 
	\begin{align*}
	\begin{cases} 
		\displaystyle \frac{dy_i}{dt} = u_i, \quad t>0,\quad i \in {\mathbb N},\vspace{3pt}\\
		\displaystyle\frac{du_i}{dt} = \kappa\sum_{j=1}^\infty m_{j}\phi(|y_j-y_i|_p)(u_j-u_i),\vspace{6pt}\\
		(y_i,u_i) \Big|_{t = 0} =(x_i^{\mathrm{in}},v_i^{\mathrm{in}}-v_0),
	\end{cases}
\end{align*}
and by Lemma \ref{L2.1}, we have
\begin{equation*}
	\sum_{i=1}^\infty m_iu_i(t) = \sum_{i=1}^\infty m_i(v_i^{\mathrm{in}}-v_0)=0.
\end{equation*}
\end{remark}
For an initial configuration $(X^{\mathrm{in}}, V^{\mathrm{in}})$, we introduce spatial and velocity diameters as follows:
\[ {\mathcal D}_{X}(0) := \sup_{i, j \in {\mathbb N}} |x_i^{\mathrm{in}} - x_j^{\mathrm{in}}|, \quad {\mathcal D}_{V}(0) := \sup_{i, j \in {\mathbb N}} |v_i^{\mathrm{in}} - v_j^{\mathrm{in}}|. \]
We first recall the result under the assumption that initial positions and velocities are contained in a compact set. 
\begin{proposition} \label{P2.2}
	\emph{\cite{HLW2025}} Suppose that initial data and coupling strength satisfy a set of conditions:
\begin{align*}
\begin{aligned}
& (X^{\mathrm{in}},V^{\mathrm{in}}),~~(\bar{X}^{\mathrm{in}},\bar{V}^{\mathrm{in}})\in \ell^{\infty,2} \times \ell^{\infty,2}, \quad \sum_{i=1}^\infty m_iv_i^{\mathrm{in}}=\sum_{i=1}^\infty m_i\bar{v}_i^{\mathrm{in}}=0, \\
& \kappa>\max \left\{\frac{\cD_V(0)}{\int_{\cD_X(0)}^\infty \phi(s)ds},\frac{\cD_{\bar{V}}(0)}{\int_{\cD_{\bar{X}}(0)}^\infty\phi(s)ds}\right\},
\end{aligned}
\end{align*}	
and let $(X(t),V(t))$, $(\bar{X}(t),\bar{V}(t))$ be global solutions to \eqref{B-2} with $p=2$. Then, there exists a positive constant $C$ only depending on initial data such that
	\begin{equation*}
	\sup_{0 \leq t < \infty} \Big( \|X(t)-\bar{X}(t)\|_{\infty,2}+\|V(t)-\bar{V}(t)\|_{\infty,2} \Big) \leq C \Big(\|X^{\mathrm{in}}-\bar{X}^{\mathrm{in}}\|_{\infty,2}+\|V^{\mathrm{in}}-\bar{V}^{\mathrm{in}}\|_{\infty,2} \Big ).
	\end{equation*}
\end{proposition}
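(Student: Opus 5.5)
We follow the classical compact-support strategy for \eqref{B-2} with $p=2$. We first prove uniform exponential flocking for each solution separately, and then feed that decay into a Gr\"onwall-type estimate for the difference of the two solutions.

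\textbf{Step 1 (flocking for each solution).} Set $\mathcal D_X(t)=\sup_{i,j}|x_i-x_j|$ and $\mathcal D_V(t)=\sup_{i,j}|v_i-v_j|$. We want the system of dissipative differential inequalities
\[
\Big|\frac{d}{dt}\mathcal D_X\Big|\le \mathcal D_V,\qquad \frac{d}{dt}\mathcal D_V\le -\kappa\,\phi(\mathcal D_X)\,\mathcal D_V ,
\]
understood in the upper-Dini sense. Since the index set is infinite, the supremum need not be attained. We will therefore work with near-extremal pairs $(i,j)$, or equivalently use the sender structure: $\sum_k m_k=1$ lets one rewrite $\dot v_i-\dot v_j$ as a convex combination plus a damping term. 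This yields the inequality with an $\varepsilon$-loss, and we then let $\varepsilon\to0$.

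From these inequalities we form the Lyapunov functional $\mathcal L=\mathcal D_V+\kappa\int_0^{\mathcal D_X}\phi(s)\,ds$, which is nonincreasing. The hypothesis $\kappa\int_{\mathcal D_X(0)}^\infty\phi>\mathcal D_V(0)$ then gives $\sup_t\mathcal D_X(t)\le x_M<\infty$. Consequently $\phi(\mathcal D_X)\ge\phi(x_M)$, and hence $\mathcal D_V(t)\le \mathcal D_V(0)e^{-\kappa\phi(x_M)t}$.

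Zero momentum gives $\sum_j m_jv_j=0$, so $|v_i|\le\mathcal D_V$. Therefore $|v_i(t)|\le Ce^{-\lambda t}$ uniformly in $i$, and the positions remain bounded: $|x_i(t)|\le |x_i^{\mathrm{in}}|+C/\lambda$. The same bounds hold for $(\bar X,\bar V)$ with a common rate $\lambda>0$ and a common radius $R$.

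\textbf{Step 2 (difference estimate).} Let $\Delta X=X-\bar X$ and $\Delta V=V-\bar V$. We write
\begin{align*}
\frac{d}{dt}(v_i-\bar v_i)=&\ \kappa\sum_j m_j\phi(|x_j-x_i|)\big[(v_j-\bar v_j)-(v_i-\bar v_i)\big]\\
&+\kappa\sum_j m_j\big[\phi(|x_j-x_i|)-\phi(|\bar x_j-\bar x_i|)\big](\bar v_j-\bar v_i).
\end{align*}
The first term is dissipative. Taking a near-maximizing index and using the sender structure gives a contribution $\le -\kappa\phi(x_M)\|\Delta V\|_{\infty}$ plus the averaged quantity $\sum_jm_j(v_j-\bar v_j)$. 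That average vanishes by momentum conservation for both systems, since both initial momenta are zero.

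For the second term, $\phi$ is Lipschitz, so it is bounded by $2\kappa\|\phi'\|_\infty\|\Delta X\|_\infty\,\mathcal D_{\bar V}(t)\lesssim\|\Delta X\|_\infty e^{-\lambda t}$. Altogether,
\[
\frac{d}{dt}\|\Delta X\|_\infty\le\|\Delta V\|_\infty,\qquad \frac{d}{dt}\|\Delta V\|_\infty\le -\kappa\phi(x_M)\|\Delta V\|_\infty + Ce^{-\lambda t}\|\Delta X\|_\infty .
\]

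\textbf{Step 3 (closing).} Integrating the second inequality with Duhamel's formula and inserting it into the first gives
\[
\|\Delta X(t)\|_\infty\le \|\Delta X(0)\|_\infty+C\|\Delta V(0)\|_\infty+C\int_0^t e^{-\lambda s}\|\Delta X(s)\|_\infty\,ds,
\]
after exchanging the order of integration; the kernel is integrable. Gr\"onwall's inequality then yields $\sup_t\|\Delta X(t)\|_\infty\le e^{C/\lambda}\big(\|\Delta X(0)\|_\infty+C\|\Delta V(0)\|_\infty\big)$. Substituting this back into the $\Delta V$ estimate bounds $\sup_t\|\Delta V(t)\|_\infty$, which proves the claim.

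\textbf{Main obstacle.} The delicate point is the rigorous differential inequality for a supremum over infinitely many indices in Steps 1 and 2. The supremum may be unattained, and the map $t\mapsto \sup_i|\cdot|$ is only Lipschitz. We plan to handle this with Dini derivatives and an $\varepsilon$-maximizer argument, exploiting the uniform Lipschitz continuity in $t$ of each $v_i$, which comes from the uniform bounds on $V$.
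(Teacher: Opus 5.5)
The paper gives no proof of Proposition~\ref{P2.2}; it quotes the result from the literature. The natural comparison is therefore with the paper's proof of Theorem~\ref{T2.1}, which has the same architecture as yours. First, a Lyapunov functional gives cohesion and exponential alignment for each solution separately. Second, a dissipative system is derived for the differences, with the cross term damped by the decay of $\bar V$. Your proposal is correct. It differs from that template in two places.

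\textbf{Choice of functionals.} You use diameters and $\sup_i$-norms rather than $\ell^p$-moments. For the pairwise weight in \eqref{B-2} this is forced, not optional. You need the pointwise bound $\phi(|x_j-x_i|)\ge\phi(\mathcal D_X)$, and moments cannot supply it; the paper's moment route works only because in \eqref{B-12} the weight is the global $\phi(\bw(X))\ge\phi(2\|X\|_p)$. The payoff is the sharp rate $\kappa\phi(\mathcal D_X)$, which matches the hypothesis $\kappa\int_{\mathcal D_X(0)}^\infty\phi>\mathcal D_V(0)$ exactly. The paper's route instead loses $\tilde c_p$ and a factor $2$.

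The price is the Dini-derivative bookkeeping, and it does go through. A near-maximizer at time $t+h$ remains an $O(h)$-near-maximizer on $[t,t+h]$, since all $v_i$ are uniformly Lipschitz in $t$. Set $w=v-\bar v$. At such an index each term $w_i\cdot(w_j-w_i)$ is nonpositive up to $O(h)$. So you may replace $\phi(|x_j-x_i|)$ by $\phi(x_M)$ term by term, and only then use $\sum_j m_jw_j=0$. You cannot pull out the average directly, because the weights vary with $j$; your phrase ``plus the averaged quantity'' should be read this way.

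\textbf{Closing step.} The paper invokes Lemma~\ref{L2.6}. That lemma needs a single rate in both the damping and the exponential source, which is why the paper takes a minimum when choosing $\alpha$, and in return it gives ready-made constants $L_0,L_1$. Your Duhamel--Gr\"onwall argument handles distinct rates $\alpha=\kappa\phi(x_M)$ and $\lambda=\kappa\phi(\bar x_M)$ directly. It yields an explicit constant of order $e^{\gamma/(\alpha\lambda)}$, with $\gamma=2\kappa\|\phi'\|_\infty\mathcal D_{\bar V}(0)$. This constant depends only on the initial data, $\kappa$ and $\phi$, as the statement requires.
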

\begin{remark}
Note that Proposition \ref{P2.2} cannot be extended to the case where the initial positions are not contained in a compact set, since we would have $\cD_X(0)=\cD_{\bar{X}}(0)=\infty$.
\end{remark}
In what follows, we assume that 
\[ X(t)\notin \ell^{\infty,p} \quad \mbox{for all $t\geq 0$}. \]
One of the main difficulties when dealing with the initial data with noncompact spatial support lies in the derivation of decay estimates for $\|V(t)\|_{\infty,p}$. In fact, it is possible to construct an initial data $(X^{\mathrm{in}}, V^{\mathrm{in}})$ such that, no matter how large the coupling strength $\kappa>0$ is, the corresponding solution $(X(t), V(t))$ satisfies
\[\|V(t)\|_{\infty,p}=\|V^{\mathrm{in}}\|_{\infty,p},\quad \forall~t\geq 0,\]
i.e., the $\ell^{\infty,p}$-norm of $V(t)$ remains constant in time as can be seen in the following proposition.
\begin{proposition}\label{P2.3}
Suppose that the initial data $(X^{\mathrm{in}}, V^{\mathrm{in}})$ satisfy the following property: there exists a subsequence $\{i_\ell\}_{\ell=1}^\infty$ such that 
	\begin{equation} \label{B-2-1}
		\lim_{\ell\to\infty}\ |v_{i_\ell}^{\mathrm{in}}|_2 = {\mathcal P}_\infty:=\ \|V^{\mathrm{in}}\|_{\infty,2}, \quad \lim\limits_{\ell\to\infty}|x_{i_\ell}^{\mathrm{in}}|_2=\infty, \quad \sum\limits_{i=1}^{\infty}m_ie^{|x_i^{\mathrm{in}}|_2}= c<\infty,
	\end{equation}
	where $c>0$ is a constant, and let $(X(t), V(t))$ be a global solution to \eqref{B-2}. Then,  the $\ell^{\infty,2}$-norm of $V(t)$ remains constant in time:
	\[\|V(t)\|_{\infty,2}=\|V^{\mathrm{in}}\|_{\infty,2},\quad \forall~t\geq 0. \]
\end{proposition}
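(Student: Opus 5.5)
We fix $t>0$ and prove the two inequalities $\|V(t)\|_{\infty,2}\le \cP_\infty$ and $\|V(t)\|_{\infty,2}\ge \cP_\infty$ separately. The upper bound is immediate from Proposition \ref{P2.1}. For the lower bound we will show that, along the subsequence $\{i_\ell\}$ of \eqref{B-2-1}, the velocity increment $|v_{i_\ell}(t)-v_{i_\ell}^{\mathrm{in}}|_2$ tends to $0$ as $\ell\to\infty$. Then
\[
\|V(t)\|_{\infty,2}\ \ge\ \limsup_{\ell\to\infty}|v_{i_\ell}(t)|_2\ \ge\ \lim_{\ell\to\infty}\Big(|v_{i_\ell}^{\mathrm{in}}|_2-|v_{i_\ell}(t)-v_{i_\ell}^{\mathrm{in}}|_2\Big)=\cP_\infty .
\]
The heuristic is that particles far away from the "bulk" of the mass barely feel the alignment force, because $\phi$ is small at large distances and the mass far away is exponentially small.

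To make this precise, we integrate $\eqref{B-2}_2$ and use $|v_j-v_i|_2\le 2\cP_\infty$ to get
\[
|v_i(t)-v_i^{\mathrm{in}}|_2\le 2\kappa\cP_\infty\int_0^t\sum_{j=1}^\infty m_j\,\phi(|x_j(s)-x_i(s)|_2)\,ds .
\]
We control the sum by splitting into near and far indices with a radius $R=R_\ell$. Since $|x_i(s)-x_i^{\mathrm{in}}|_2\le \cP_\infty s$, for $s\le t$ we have $|x_i(s)|_2\ge |x_i^{\mathrm{in}}|_2-\cP_\infty t$.

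For indices $j$ with $|x_j(s)|_2\le R$, we get $|x_j(s)-x_{i}(s)|_2\ge |x_{i}^{\mathrm{in}}|_2-\cP_\infty t-R$. Hence, for $i=i_\ell$, their contribution is at most $\phi(|x_{i_\ell}^{\mathrm{in}}|_2-\cP_\infty t-R)$.

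For indices with $|x_j(s)|_2>R$, we use $\phi\le1$ and Lemma \ref{L2.1}(3) with $\alpha=1$ together with Chebyshev's inequality:
\[
\sum_{j:\,|x_j(s)|_2>R} m_j\le e^{-R}\sum_j m_j e^{|x_j(s)|_2}\le e^{-R}e^{\cP_\infty t}c .
\]
Choosing $R_\ell=\tfrac12|x_{i_\ell}^{\mathrm{in}}|_2$, both pieces vanish uniformly for $s\in[0,t]$ as $\ell\to\infty$, since $\phi(r)\to0$ as $r\to\infty$ and $|x_{i_\ell}^{\mathrm{in}}|_2\to\infty$. Integrating over $[0,t]$ then gives $|v_{i_\ell}(t)-v_{i_\ell}^{\mathrm{in}}|_2\to0$, which completes the lower bound.

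The main obstacle is the tail control. The alignment sum involves all particles, and near-field particles could sit anywhere. The exponential-moment bound of Lemma \ref{L2.1}(3) is exactly what makes the mass outside the ball of radius $R$ uniformly small on $[0,t]$. Only a finite-time bound is needed, so its growth factor $e^{\cP_\infty t}$ is harmless. Any tightness bound, for example from $\ell^{q,2}$ moments via Lemma \ref{L2.1}(2), would also suffice, but the exponential moment is the one the hypothesis provides.
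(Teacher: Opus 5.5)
Your proof is correct and follows the paper's overall strategy: track the tagged particles $i_\ell$, bound the accumulated alignment force $2\kappa\mathcal{P}_\infty\int_0^t\sum_j m_j\phi(|x_j(s)-x_{i_\ell}(s)|_2)\,ds$, and split the sum into a part where $\phi$ is small and a part carrying little mass. The decomposition, however, is different.

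The paper fixes $R>2t\mathcal{P}_\infty$ (depending on $t$ and $\varepsilon$) and splits by the \emph{initial} distance to the tagged particle. Indices with $|x_j^{\mathrm{in}}-x_{i_\ell}^{\mathrm{in}}|_2\le R$ satisfy $|x_j^{\mathrm{in}}|_2\ge|x_{i_\ell}^{\mathrm{in}}|_2-R$, so their mass is at most $c\,e^{-(|x_{i_\ell}^{\mathrm{in}}|_2-R)}$ by Chebyshev on the \emph{initial} exponential moment \eqref{B-3}. The remaining indices stay at mutual distance at least $R-2t\mathcal{P}_\infty$ by the two-particle displacement bound \eqref{B-5}. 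Because these index sets do not move in time, no propagation of moments is needed, at the price of a nested $\varepsilon$--$R$--$\ell$ choice.

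You split instead by the \emph{current} distance to the origin, with a radius $R_\ell=\tfrac12|x_{i_\ell}^{\mathrm{in}}|_2$ that grows with $\ell$. This forces you to invoke Lemma \ref{L2.1}(3) for the outer mass at time $s$. In exchange you get, for all large $\ell$, the explicit estimate
\[
|v_{i_\ell}(t)-v_{i_\ell}^{\mathrm{in}}|_2\le 2\kappa\mathcal{P}_\infty t\Big[\phi\big(\tfrac12|x_{i_\ell}^{\mathrm{in}}|_2-\mathcal{P}_\infty t\big)+c\,e^{\mathcal{P}_\infty t-\frac12|x_{i_\ell}^{\mathrm{in}}|_2}\Big]
\]
and a single limit $\ell\to\infty$. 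You also control the vector increment directly rather than differentiating $|v_i|_2$, a step that needs care where $v_i=0$. And you state the upper bound from Proposition \ref{P2.1} explicitly, which the paper leaves implicit.

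As your closing remark anticipates, in both routes the exponential moment serves only as a tightness device, beyond placing $X^{\mathrm{in}}$ in $\ell^{2,2}$ so that Proposition \ref{P2.1} applies. Since $|x_j(s)-x_j^{\mathrm{in}}|_2\le\mathcal{P}_\infty t$, your outer mass is bounded by $\sum_{j:\,|x_j^{\mathrm{in}}|_2>R_\ell-\mathcal{P}_\infty t}m_j$. This tends to $0$ merely because $\sum_j m_j=1$.
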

\begin{proof}
It follows from $\eqref{B-2-1}_3$ that  for any $R>0$, we have 
	\begin{equation}	\label{B-3}
		\sum\limits_{i:|x_i^{\mathrm{in}}|_2\ge R}m_i\le c e^{-R}.
	\end{equation}
For any $i\in\bbn$, one has 
	\begin{align*}
		\frac{d}{dt}|v_i|_2
		= \frac{v_i}{|v_i|_2}\cdot \dot v_i
		&\ge - \kappa\sum_{j=1}^\infty m_j \phi\big(|x_j-x_i|_2\big)\big(|v_j|_2+|v_i|_2\big) \ge - 2\kappa {\mathcal P}_\infty\sum_{j=1}^\infty m_j \phi\big(|x_j-x_i|_2\big).
	\end{align*}
This yields
	\begin{align}\label{B-4}
		|v_i(t)|_2
		\ge |v_i^{\mathrm{in}}|_2-2 \kappa{\mathcal P}_\infty \int_0^t \sum_{j=1}^\infty m_j
		\phi\big(|x_j(s)-x_i(s)|_2\big)~ds .
	\end{align}
	Moreover, for any $i,j\in\bbn$, we have
	\begin{align*}
		|x_j(t)-x_i(t)|_2 &\geq |x_j^{\mathrm{in}}-x_i^{\mathrm{in}}|_2 -|x_j(t)-x_j^{\mathrm{in}}|_2-|x_i(t)-x_i^{\mathrm{in}}|_2 \geq |x_j^{\mathrm{in}}-x_i^{\mathrm{in}}|_2-2t {\mathcal P}_\infty.
	\end{align*}
Therefore, we have
	\begin{equation}	\label{B-5}
		\phi\big(|x_j(t)-x_i(t)|_2\big)
		\le \phi\big((|x_j^{\mathrm{in}}-x_i^{\mathrm{in}}|_2-2t {\mathcal P}_\infty)_+\big).
	\end{equation}
	Now, we fix 
	\[ t>0 \quad \mbox{and} \quad \e>0. \]
Since $\lim_{r \to \infty}\phi(r)=0,$ there exists $R>2t {\mathcal P}_\infty$ such that
	\begin{align}\label{B-6}
		2\kappa t {\mathcal P}_\infty~\phi(R-2t {\mathcal P}_\infty)<\frac{\varepsilon}{2}.\end{align}
	Moreover, we choose $i_\ell$ such that $|v_{i_\ell}^{\mathrm{in}}|_2$ is sufficiently close to $\|V^{\mathrm{in}}\|_{\infty,2}$ and $|x_{i_\ell}^{\mathrm{in}}|_2$ is sufficiently large:
	\begin{equation}\label{B-7}
		|v_{i_\ell}^{\mathrm{in}}|_2 > \|V^{\mathrm{in}}\|_{\infty,2}-\e,\quad |x_{i_\ell}^{\mathrm{in}}|_2>R,\quad \text{and} \quad 2c\kappa t {\mathcal P}_\infty ~e^{-(|x_{i_\ell}^{\mathrm{in}}|_2-R)}<\frac{\e}{2}.
	\end{equation}
	Then, we use \eqref{B-5} to derive
	\begin{align}
	\begin{aligned} \label{B-7-1}
		&\sum_{j=1}^\infty m_j \phi\big(|x_j(t)-x_{i_\ell}(t)|_2\big)\\
		&\hspace{1cm} = \sum_{j:|x_j^{\mathrm{in}}-x_{i_\ell}^{\mathrm{in}}|_2\le R} m_j\phi(|x_j(t)-x_{i_\ell}(t)|_2)~
		+ \sum_{j:|x_j^{\mathrm{in}}-x_{i_\ell}^{\mathrm{in}}|_2>R} m_j~\phi\big(|x_j(t)-x_{i_\ell}(t)|_2)\\
		&\hspace{1cm} \leq \sum_{j:|x_j^{\mathrm{in}}|\geq|x_{i_\ell}^{\mathrm{in}}|_2- R} m_j~
		+ \phi\big(R-2t {\mathcal P}_\infty \big).
	\end{aligned}
	\end{align}
	We combine \eqref{B-7-1} together with \eqref{B-3} and \eqref{B-4} to find 
	\begin{align}\label{B-8}
		\begin{aligned}
			|v_{i_\ell}(t)|_2
			\ge |v_{i_\ell}^{\mathrm{in}}|_2
			- 2c\kappa t\cP_\infty ~e^{-(|x_{i_\ell}^{\mathrm{in}}|_2-R)}
			- 2\kappa t\cP_\infty~\phi\big(R-2t\cP_\infty \big).
		\end{aligned}
	\end{align}
Again, we use \eqref{B-6}, \eqref{B-7}, and \eqref{B-8} to see
	\[
	\|V(t)\|_{\infty,2} \ \ge\ |v_{i_\ell}(t)|_2 \ \ge|v_{i_\ell}^{\mathrm{in}}|_2 -\varepsilon\  >\|V^{\mathrm{in}}\|_{\infty,2}-2\e.
	\]
	Since $\varepsilon>0$ is arbitrary, we obtain the desired estimate:
	\[ \|V(t)\|_{\infty,2} \ge \|V^{\mathrm{in}}\|_{\infty,2}\quad \forall~t\geq 0. \]
\end{proof}
\begin{remark}
	We can easily construct an example satisfying Proposition \ref{P2.3} as follows: let
	\[k_i = \left\lfloor\frac{i+1}{2}\right\rfloor,\quad  m_i = \frac{1}{2^{k_i+1}},\quad x_i^{\mathrm{in}} = k_i \log 2-2\log k_i,\quad v_i^{\mathrm{in}} = (-1)^{i}. \]
	Then we have
	\begin{equation*}
		\sum_{i=1}^\infty m_i = 1, \qquad \sum_{i=1}^{\infty} m_i v_i^{\mathrm{in}}=0,
	\end{equation*}
	and $(X^{\mathrm{in}}, V^{\mathrm{in}})$ satisfies \eqref{B-2-1} with $\cP_\infty=1$, and
	\[\sum_{i=1}^{\infty}m_ie^{|x_i^{\mathrm{in}}|} = \sum_{k=1}^{\infty}\frac{1}{k^2}<\infty.\]
	Therefore, we have
	\[\|V(t)\|_{\infty,2} = 1,\quad \forall~t\geq 0.\]
\end{remark}
Even though we do not have the decay estimate of $\|V(t)\|_{\infty,p}$ in general, we can still derive decay estimates for $\|V(t)\|_{q,p}$ when $q<\infty$ as in \cite{w8,w9,w10}. We introduce the time-dependent index set $\Lambda_t$ together with an increasing function $R(t)$ defined by
\[ \Lambda_t := \{i:|x_i(t)|_p\leq R(t)\}\quad \text{where} \quad R(t)\simeq (1+t)^{\gamma}.\]
Here, the exponent $\gamma>1$ will be specified later. The following lemma provides a decay estimate for the sum of  $m_i$'s corresponding to particles located outside the ball $B_{R(t)}$. 

\begin{lemma} \label{L2.2}
 Let $(X(t),V(t))$ be a global solution to \eqref{B-2} with initial data $(X^{\mathrm{in}},V^{\mathrm{in}})\in\ell^{q,p}\times \ell^{\infty,p}$. Then, we have
	\[\sum_{i\in \Lambda_t^c}m_i\lesssim (1+t)^{-q(\gamma-1)}. \]
\end{lemma}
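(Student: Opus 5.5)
The plan is a Chebyshev (Markov) inequality applied to the $q$-th weighted moment of positions, combined with the linear-in-time growth bound from Lemma \ref{L2.1}(2).

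First, by definition of $\Lambda_t^c$, every $i\in\Lambda_t^c$ satisfies $|x_i(t)|_p> R(t)$. Hence
\[
\sum_{i\in\Lambda_t^c} m_i \le \sum_{i\in\Lambda_t^c} m_i\frac{|x_i(t)|_p^q}{R(t)^q}\le \frac{1}{R(t)^q}\sum_{i=1}^\infty m_i|x_i(t)|_p^q=\frac{\|X(t)\|_{q,p}^q}{R(t)^q}.
\]
Second, I invoke Lemma \ref{L2.1}(2), together with Jensen's inequality $\|V^{\mathrm{in}}\|_{q,p}\le \|V^{\mathrm{in}}\|_{\infty,p}$, which holds since $\sum_i m_i=1$. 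This gives
\[
\|X(t)\|_{q,p}\le \|X^{\mathrm{in}}\|_{q,p}+t\|V^{\mathrm{in}}\|_{\infty,p}\le C_0(1+t),\qquad C_0:=\max\{\|X^{\mathrm{in}}\|_{q,p},\|V^{\mathrm{in}}\|_{\infty,p}\},
\]
and $C_0<\infty$ by the hypothesis $(X^{\mathrm{in}},V^{\mathrm{in}})\in\ell^{q,p}\times\ell^{\infty,p}$.

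Third, since $R(t)\simeq(1+t)^\gamma$, there is $c>0$ with $R(t)\ge c(1+t)^\gamma$. Combining the two steps yields
\[
\sum_{i\in\Lambda_t^c}m_i\le \frac{C_0^q(1+t)^q}{c^q(1+t)^{q\gamma}}\lesssim (1+t)^{-q(\gamma-1)}.
\]

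There is no real obstacle here. The only points needing care are that $q<\infty$ (for $q=\infty$ the statement is vacuous or must be read as a limit), and that Lemma \ref{L2.1} applies to the global solution under exactly the stated summability assumptions, which Remark \ref{R2.1} ensures. The implied constant depends only on $q$, the initial norms, and the constant in $R(t)\simeq(1+t)^\gamma$.
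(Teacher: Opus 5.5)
Your proposal is correct and matches the paper's proof essentially step for step. Like the paper, you apply Markov's inequality to the weighted $q$-th spatial moment, use the linear growth bound $\|X(t)\|_{q,p}\le\|X^{\mathrm{in}}\|_{q,p}+t\|V^{\mathrm{in}}\|_{q,p}$ from Lemma \ref{L2.1}(2), and divide by $R(t)^q\gtrsim(1+t)^{q\gamma}$; the only cosmetic difference is that you bound the moment by $C_0(1+t)$ (after noting $\|V^{\mathrm{in}}\|_{q,p}\le\|V^{\mathrm{in}}\|_{\infty,p}$) instead of using $(a+b)^q\le 2^{q-1}(a^q+b^q)$.
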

\begin{proof}
	Since $R(t)< |x_i(t)|$ for $i\in \Lambda_t^c$, we use Lemma \ref{L2.1} (2) to derive
	\begin{align} \label{B-9}
		\begin{aligned}
			R(t)^q\sum_{i\in \Lambda_t^c}m_i &\leq \sum_{i\in \Lambda_t^c}m_i|x_i|_p^q \leq \sum_{i=1}^\infty m_i|x_i|_p^q \leq \left(\|X^{\mathrm{in}}\|_{q,p}+t\|V^{\mathrm{in}}\|_{q,p}\right)^q\\
			&\leq 2^{q-1}\left(\|X^{\mathrm{in}}\|_{q,p}^q+t^q\|V^{\mathrm{in}}\|_{q,p}^q\right).
		\end{aligned}
	\end{align}
	We divide both sides of \eqref{B-9} by $R(t)^q$ to get
	\[\sum_{i\in \Lambda_t^c}m_i\leq \frac{2^{q-1}\Big(\|X^{\mathrm{in}}\|_{q,p}^q+t^q\|V^{\mathrm{in}}\|_{q,p}^q\Big)}{R(t)^q}\lesssim (1+t)^{-q(\gamma-1)}. \]
\end{proof}

Next, we derive the decay estimate of $\|V(t)\|_{q,p}$. In the following lemma, we show that the map $x\mapsto |x|_p^q$ is uniformly convex, which improves \eqref{A.2}. For $2\leq p\leq q$, we set
\[  G(x):=|x|_p^q, \quad c_q = 2^{2-q}>0. \]
\begin{lemma} \label{L2.3}
The function $G$ satisfies 
	\[(\nabla G(a)-\nabla G(b))\cdot (a-b)\geq c_q|a-b|_p^q\quad \forall~a,b\in\bbr^d.\]
\end{lemma}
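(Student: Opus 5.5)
The plan is to prove a midpoint (Clarkson-type) uniform convexity inequality for $G$. From it we get a strengthened first-order convexity inequality, and symmetrizing that inequality gives the monotonicity estimate with exactly the constant $c_q=2^{2-q}$.

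First, $G$ is $C^1$ on $\bbr^d$ for $2\le p\le q$. Away from the origin, $|x|_p$ is $C^1$ because $p\ge 2>1$, so $G=|x|_p^q$ is $C^1$ there. At the origin, $G(x)=O(|x|_p^q)$ with $q>1$, so $\nabla G(0)=0$, and $\nabla G$ is continuous at $0$ because $|\nabla G(x)|\lesssim |x|_p^{q-1}$. Also, $G$ is convex.

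\textbf{Step 1 (midpoint inequality).} We show that for all $a,b\in\bbr^d$,
\[
G\Big(\tfrac{a+b}{2}\Big)+2^{-q}|a-b|_p^q\le \tfrac12\big(G(a)+G(b)\big).
\]
Since $p\ge2$, Clarkson's inequality in $\ell^p(\bbr^d)$ gives
\[
\Big|\tfrac{a+b}{2}\Big|_p^p+\Big|\tfrac{a-b}{2}\Big|_p^p\le \tfrac12\big(|a|_p^p+|b|_p^p\big).
\]
It can be proved coordinatewise from the scalar inequality $|\tfrac{s+t}{2}|^p+|\tfrac{s-t}{2}|^p\le \tfrac12(|s|^p+|t|^p)$, $p\ge2$. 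Put $r=q/p\ge1$ and raise both sides to the power $r$. On the left we use superadditivity $A^r+B^r\le (A+B)^r$ for $A,B\ge0$. On the right we use the power-mean (Jensen) inequality $\big(\tfrac{A+B}{2}\big)^r\le \tfrac{A^r+B^r}{2}$. Together these yield the claimed inequality.

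I expect this step to be the main obstacle. It is the only place where the hypothesis $2\le p\le q$ is used: $p\ge2$ is needed for Clarkson's inequality, and $q\ge p$ for the two convexity facts about $t\mapsto t^{q/p}$. The key check is that the constant stays $2^{-q}$ and is not degraded by the exponent change.

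\textbf{Step 2 (strengthened first-order inequality).} Convexity and differentiability of $G$ give
\[
G\Big(\tfrac{a+b}{2}\Big)\ge G(a)+\tfrac12\nabla G(a)\cdot(b-a).
\]
Substituting this into the midpoint inequality of Step 1 and multiplying by $2$ gives
\[
G(b)\ge G(a)+\nabla G(a)\cdot(b-a)+2^{1-q}|a-b|_p^q .
\]

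\textbf{Step 3 (symmetrize).} Write the same inequality with $a$ and $b$ interchanged and add it to the one from Step 2. The $G$ terms cancel, and we obtain
\[
(\nabla G(a)-\nabla G(b))\cdot(a-b)\ge 2\cdot 2^{1-q}|a-b|_p^q=c_q|a-b|_p^q,
\]
which is the assertion of Lemma \ref{L2.3}.
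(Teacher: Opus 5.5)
Your proof is correct and follows essentially the same route as the paper. First you prove the midpoint inequality $G(\tfrac{a+b}{2})+2^{-q}|a-b|_p^q\le\tfrac12\bigl(G(a)+G(b)\bigr)$ via Clarkson's inequality and the two convexity properties of $s\mapsto s^{q/p}$; your Steps 2--3 then rearrange the paper's chord-slope bound $h'(1)-h'(0)\ge 2\bigl(h(1)+h(0)-2h(\tfrac12)\bigr)$ for $h(t)=G(b+t(a-b))$, and your explicit $C^1$ check fills in a regularity point the paper leaves implicit.
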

\begin{proof} We define
\[ y(t)=b+(a-b)t, \quad  h(t):= G(y(t)). \]
First, one can easily check that the map $t\mapsto |y(t)|_p$ is convex. Moreover, $y\mapsto y^q$ is convex and increasing. This implies $h$ is convex. On the other hand, one has 
	\begin{align}\label{B-10}
		\begin{aligned}
			& (\nabla G(a)-\nabla G(b))\cdot (a-b) \\
			& \hspace{0.5cm} =h'(1)-h'(0) \geq 2\left(h(1)-h\left(\frac{1}{2}\right)\right)-2\left(h\left(\frac{1}{2}\right)-h\left(0\right)\right)\\
			& \hspace{0.5cm} = 2\left(h(1)+h(0)-2h\left(\frac{1}{2}\right)\right) =2\left(G(a)+G(b)-2G\left(\frac{a+b}{2}\right)\right).
		\end{aligned}
	\end{align}
	Recall Clarkson's inequality \cite{C}: for $p\geq 2$, 
	\[2(|x|_p^p+|y|_p^p)\leq|x+y|_p^p+|x-y|_p^p,  \quad\forall~x,y\in\bbr^d.\]
	Since the map $s \mapsto s^{q/p}$ is convex, we use Clarkson's inequality and Jensen's inequality to get
	\begin{equation*}
		\left(\left|\frac{a+b}{2}\right|_p^p+\left|\frac{a-b}{2}\right|_p^p\right)^{\frac{q}{p}}\leq \left(\frac{|a|_p^p+|b|_p^p}{2}\right)^{\frac{q}{p}}\leq \frac{|a|_p^q+|b|_p^q}{2}.
	\end{equation*}
	Therefore, we have
	\begin{align}\label{B-11}
		\begin{aligned}
			& G(a)+G(b)-2G\left(\frac{a+b}{2}\right) \\
			& \hspace{1cm} = 2\left(\frac{|a|_p^q+|b|_p^q}{2} - \left|\frac{a+b}{2}\right|_p^q\right) \\
			& \hspace{1cm} \geq 2\left(\left(\left|\frac{a+b}{2}\right|_p^p+\left|\frac{a-b}{2}\right|_p^p\right)^{\frac{q}{p}}-\left|\frac{a+b}{2}\right|_p^q\right) \geq 2\left|\frac{a-b}{2}\right|_p^{q},
		\end{aligned}
	\end{align}
	where we used
	\[(x+y)^\alpha-x^\alpha\geq y^\alpha,\quad \forall~x,y>0\]
	for $\alpha=q/p\geq 1$.
	We combine \eqref{B-10} and \eqref{B-11} to conclude
	\begin{align*}
		(\nabla G(a)-\nabla G(b))\cdot (a-b)\geq 2^{2-q}|a-b|_p^q.
	\end{align*}
\end{proof}
Now, we provide the decay estimate of $\|V(t)\|_{q,p}$. 	Note that we need to assume $\beta<1$ for the following result.
\begin{proposition}\label{P2.4}
Let $\beta \in (0, 1)$ and $2\leq p\leq \bar{q}$. Suppose that $(X^{\mathrm{in}},V^{\mathrm{in}})\in\ell^{\bar{q},p}\times \ell^{\infty,p}$ satisfy
\[\sum_{i=1}^\infty m_iv_i^{\mathrm{in}}=0,\]
and let $(X(t),V(t))$ be a global solution to \eqref{B-2} with initial data $(X^{\mathrm{in}}, V^{\mathrm{in}})$. Then for any $q\in[p,\bar{q}]$ and any $\gamma\in(1,\frac{1}{\beta})$,  we have\[\|V(t)\|_{q,p}\lesssim (1+t)^{-\frac{\bar{q}}{q}(\gamma-1)}.\]
\end{proposition}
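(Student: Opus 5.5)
We follow the energy method of \cite{w8,w10}: differentiate the functional $\mathcal{E}_q(t):=\|V(t)\|_{q,p}^q=\sum_i m_i G(v_i)$ with $G(x)=|x|_p^q$, symmetrize, and use the uniform convexity of Lemma \ref{L2.3} restricted to the ball $B_{R(t)}$, where $R(t)\simeq(1+t)^{\gamma}$ and $\Lambda_t=\{i:|x_i(t)|_p\le R(t)\}$.

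\textbf{Dissipation inequality.} Using the sender network and exchanging $i\leftrightarrow j$,
\[
\frac{d}{dt}\mathcal{E}_q=\kappa\sum_{i,j}m_im_j\phi(|x_j-x_i|_p)\nabla G(v_i)\cdot(v_j-v_i)
=-\frac{\kappa}{2}\sum_{i,j}m_im_j\phi(|x_j-x_i|_p)(\nabla G(v_i)-\nabla G(v_j))\cdot(v_i-v_j).
\]
Every summand is nonnegative, so we may keep only $i,j\in\Lambda_t$. On that set $|x_i-x_j|_p\le 2R(t)$, hence $\phi\ge\phi(2R(t))\gtrsim(1+t)^{-\beta\gamma}$. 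Lemma \ref{L2.3} then gives
\[
\frac{d}{dt}\mathcal{E}_q\le-\frac{\kappa c_q}{2}\phi(2R(t))\sum_{i,j\in\Lambda_t}m_im_j|v_i-v_j|_p^q .
\]

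\textbf{Lower bound on the restricted sum.} We must compare $\sum_{i,j\in\Lambda_t}m_im_j|v_i-v_j|_p^q$ with $\mathcal{E}_q$. Convexity gives $\sum_{j\in\Lambda_t}m_j|v_i-v_j|_p^q\ge M_t|v_i-\bar v_{\Lambda}|_p^q$, where $M_t=\sum_{\Lambda_t}m_j$ and $\bar v_\Lambda$ is the weighted mean over $\Lambda_t$. Zero momentum (Lemma \ref{L2.1}(1)) gives
\[
M_t\bar v_\Lambda=-\sum_{\Lambda_t^c}m_jv_j, \qquad |\bar v_\Lambda|_p\lesssim \mathcal{P}_\infty\sum_{\Lambda_t^c}m_j\lesssim(1+t)^{-\bar q(\gamma-1)}
\]
by Lemma \ref{L2.2} with $\bar q$ in place of $q$. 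Also $\sum_{\Lambda_t^c}m_i|v_i|_p^q\le\mathcal{P}_\infty^q\sum_{\Lambda_t^c}m_i$. Combining these yields
\[
\sum_{i,j\in\Lambda_t}m_im_j|v_i-v_j|_p^q\ge c\,\mathcal{E}_q-C(1+t)^{-\bar q(\gamma-1)}.
\]
Here we use $M_t\ge1/2$ for large $t$ and $|a-b|^q\ge2^{1-q}|a|^q-|b|^q$. The result is the differential inequality
\[
\mathcal{E}_q'\le-C_1(1+t)^{-\beta\gamma}\mathcal{E}_q+C_2(1+t)^{-\beta\gamma-\bar q(\gamma-1)}.
\]

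\textbf{Gronwall step (main obstacle).} Since $\beta\gamma<1$, the integrating factor $\exp\big(C_1(1+t)^{1-\beta\gamma}/(1-\beta\gamma)\big)$ grows like a stretched exponential. The homogeneous part therefore decays faster than any power. For the inhomogeneous part, the standard estimate $\int_0^t e^{-(\Phi(t)-\Phi(s))}\Phi'(s)g(s)\,ds\lesssim g(t)$ applies to slowly varying $g(s)=(1+s)^{-\bar q(\gamma-1)}$: split at $s=t/2$, bound the early part by the stretched exponential, and the late part by $g(t/2)\simeq g(t)$. This gives $\mathcal{E}_q\lesssim(1+t)^{-\bar q(\gamma-1)}$, and taking the $q$-th root gives the claim. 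This step, together with the constant bookkeeping in the lower bound (ensuring $M_t$ is bounded below and handling small $t$ by the trivial bound $\mathcal{E}_q\le\mathcal{P}_\infty^q$), is where I expect the real work to be.
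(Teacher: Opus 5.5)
Your proposal is correct and follows essentially the same route as the paper's proof in Appendix~B. The shared steps are the symmetrized dissipation of $\|V\|_{q,p}^q$ restricted to $\Lambda_t$ via Lemma~\ref{L2.3}, the tail bound of Lemma~\ref{L2.2} with exponent $\bar q$, the resulting inhomogeneous differential inequality with rate $\phi(2R(t))\simeq(1+t)^{-\beta\gamma}$, and a Gr\"onwall estimate split at $t/2$. The only minor difference is how you bound $\sum_{i,j\in\Lambda_t}m_im_j|v_i-v_j|_p^q$ from below: you apply Jensen around the restricted mean $\bar v_\Lambda$, which is small by zero momentum, whereas the paper writes $\sum_{\Lambda_t\times\Lambda_t}\ge\sum_{i,j}-2\sum_i\sum_{j\in\Lambda_t^c}$ and applies Jensen with the global zero mean.
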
	
\begin{proof}
	The proof is based on a combination of the time-varying effective domain method proposed in  \cite{w9, w8,w10} and Lemma \ref{L2.3}, and is given in Appendix \ref{App-B}.
\end{proof}

\begin{remark} \label{R2.5}
The preceding analysis reveals a fundamental obstruction in establishing long-time stability for the classical infinite ICS model in a fully noncompact spatial setting. Although we can obtain decay of velocity moment, Proposition \ref{P2.3} shows that the velocity diameter may fail to decay in time even when the spatial configuration satisfies strong moment conditions and is not excessively dispersed. This indicates that the alignment mechanism based on pairwise distance interactions is insufficient to guarantee uniform-in-time stability once particles are allowed to spread throughout the whole Euclidean space.
\end{remark}
	\subsection{Counterexample}\label{sec:2.3}
	In this subsection, we provide a counterexample to show that uniform-in-time stability can be destroyed even when two initial data are close enough in a spatially extended setting.
For this, we recall the KCS model:
\begin{equation} \label{B-11-1}
	\begin{cases} 
		\displaystyle \partial_tf+ \nabla_x \cdot (v f) + \nabla_v \cdot (L[f]f)=0, \quad t > 0,~~(x, v) \in {\mathbb R}^{2d},  \vspace{6pt}\\
		\displaystyle L[f](t,x,v) :=-\kappa\int_{\mathbb{R}^{2d}} \phi(|x-x_*|_p) \left(v-v_*\right)  f(t,x_*,v_*) dx_* d v_*,\vspace{6pt} \\
		\displaystyle f \Big|_{t = 0}  =f^{\mathrm{in}}.
	\end{cases}
\end{equation}
To consider the distance between measure-valued solutions, we employ the Wasserstein distance to compute the distance between two measures $\mu, \nu\in\mathcal{P}(\bbr^{2d})$ as follows: for $p\in[1,\infty]$ and $q\in[1,\infty)$, $W_{q,p}$ is defined as
\begin{equation*}
	W_{q,p}(\mu,\nu):=\inf_{\pi\in\Pi(\mu,\nu)}\left(\int_{\bbr^{2d}\times\bbr^{2d}}|z-z^*|_p^q\pi(dz,dz^*)\right)^{1/q},\quad W_p(\mu,\nu):=W_{p,p}(\mu,\nu).
\end{equation*}
For $q=\infty$, we define $W_{\infty,p}:=\lim_{q\to\infty}W_{q,p}$ so that 
\begin{equation*}
	W_{\infty,p}(\mu,\nu):= \inf_{\pi\in\Pi(\mu,\nu)}\sup_{(z,z^*)\in \operatorname{supp}\pi}|z-z^*|_p.
\end{equation*}
For further details on Wasserstein distances, we refer to \cite{k8}. Now, we recall uniform-in-time stability estimate for \eqref{B-11-1}. First, we set 
\[\cD_X(t):=\sup_{x,y\in\mathrm{supp}_x\mu_t\cup \mathrm{supp}_x \nu_t}|x-y|_p,\quad\cD_V(t):=\sup_{v,w\in\mathrm{supp}_v\mu_t\cup \mathrm{supp}_v\nu_t}|v-w|_p.\]
\begin{proposition}\emph{\cite{HKZ}} \label{P2.5}
Suppose that the initial probability measures $\mu_0, \nu_0\in\cP(\bbr^{2d})$ have a compact support with 
\[\int_{\bbr^{2d}}v~ \mu_0(dx,dv)=\int_{\bbr^{2d}}v~ \nu_0(dx,dv) \quad \mbox{and} \quad 
\kappa>\frac{\cD_V(0)}{\int_{\cD_X(0)}^{\infty}\phi(s)ds},
\]
and let $\mu_t$ and $\nu_t$ be the measure-valued solutions to \eqref{B-11-1} with initial measure $\mu_0$ and $\nu_0$, respectively. Then there exists a nonnegative constant $G$ independent of $t$ such that 
\[\sup_{t\geq 0}W_p(\mu_t,\nu_t)\leq G~W_p(\mu_0, \nu_0).\]
\end{proposition}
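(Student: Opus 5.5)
This is a cited result from \cite{HKZ}, so the aim is to reconstruct its standard argument under compact support. The strategy is to combine exponential flocking of each solution with a Lagrangian (characteristic-flow) comparison. The comparison is then closed by a Grönwall-type argument whose forcing terms are integrable in time.

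\emph{Step 1: exponential flocking of each solution.} For compactly supported data the supports stay compact. Write $\cD_X^\mu(t)$ and $\cD_V^\mu(t)$ for the position and velocity diameters of $\mathrm{supp}\,\mu_t$. The standard SDDI argument along characteristics gives
\[
\Big|\frac{d}{dt}\cD_X^\mu\Big|\le \cD_V^\mu, \qquad \frac{d}{dt}\cD_V^\mu\le -\kappa\,\phi(\cD_X^\mu)\,\cD_V^\mu .
\]
Introduce the Lyapunov functional $\cD_V^\mu(t)+\kappa\int_{\cD_X^\mu(0)}^{\cD_X^\mu(t)}\phi(s)\,ds$, which is nonincreasing. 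Together with the assumption $\kappa\int_{\cD_X(0)}^\infty\phi>\cD_V(0)$, this yields a uniform bound $\cD_X^\mu(t)\le x_\infty<\infty$. Consequently $\cD_V^\mu(t)\le \cD_V(0)e^{-\kappa\phi(x_\infty)t}$, and the same bounds hold for $\nu_t$. The momenta are conserved and equal, so both velocity supports collapse exponentially to the same point $v_c$, while the positions of each solution remain in a bounded set.

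\emph{Step 2: coupling through characteristic flows.} Let $Z^\mu_t$ and $Z^\nu_t$ be the characteristic flows, so that $\mu_t=Z^\mu_t\#\mu_0$ and $\nu_t=Z^\nu_t\#\nu_0$. Fix an optimal coupling $\pi_0$ of $(\mu_0,\nu_0)$ and push it forward by $(Z^\mu_t,Z^\nu_t)$; this gives an admissible coupling at time $t$. Define
\[
\mathcal{X}(t)=\Big(\int |x^\mu_t-x^\nu_t|_p^p\,d\pi_0\Big)^{1/p}, \qquad \mathcal{V}(t)=\Big(\int |v^\mu_t-v^\nu_t|_p^p\,d\pi_0\Big)^{1/p}.
\]
Differentiating, and using that $\phi$ is Lipschitz and bounded below by $\phi(x_\infty)$ on the relevant range, gives
\[
\dot{\mathcal{X}}\le \mathcal{V}, \qquad \dot{\mathcal{V}}\le -\kappa\phi(x_\infty)\mathcal{V}+C\kappa\big(\cD_V^\mu+\cD_V^\nu\big)\big(\mathcal{X}+\mathcal{V}\big).
\]
The last term arises from the difference of the weights $\phi(|x-x_*|)-\phi(|\bar x-\bar x_*|)$ multiplied by velocity differences, which are bounded by the velocity diameters.

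\emph{Step 3 (main obstacle): closing without linear growth of $\mathcal{X}$.} The coefficient $\cD_V^\mu+\cD_V^\nu$ decays exponentially, so the forcing is integrable. Grönwall applied to $\mathcal{X}+\mathcal{V}$ then gives $\mathcal{X}+\mathcal{V}\le C(\mathcal{X}(0)+\mathcal{V}(0))$. For this it suffices to note that
\[
\frac{d}{dt}(\mathcal{X}+\mathcal{V})\le C\big(\cD_V^\mu+\cD_V^\nu\big)(\mathcal{X}+\mathcal{V}),
\]
which holds once $\kappa\phi(x_\infty)\ge1$ absorbs the $\mathcal{V}$ coming from $\dot{\mathcal{X}}$. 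If $\kappa\phi(x_\infty)<1$, I would instead use the weighted combination $\mathcal{X}+\lambda\mathcal{V}$ with $\lambda=1/(\kappa\phi(x_\infty))$; the argument is the same. The exponent $\int_0^\infty(\cD_V^\mu+\cD_V^\nu)\,dt<\infty$ makes the constant independent of $t$. Finally, $\mathcal{X}(0)+\mathcal{V}(0)\lesssim W_p(\mu_0,\nu_0)$ and $W_p(\mu_t,\nu_t)\lesssim\mathcal{X}(t)+\mathcal{V}(t)$, which gives $G$.

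The delicate point is the sign structure in Step 2: one must ensure the $-\kappa\phi\,\mathcal{V}$ dissipation survives after the $L^p$ differentiation. This needs a monotonicity inequality for $|\cdot|_p^{p-2}(\cdot)$ in the spirit of Lemma \ref{L2.3}.
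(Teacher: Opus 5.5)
Your proposal is correct, but it takes a different route from the one this result rests on. The paper does not prove Proposition \ref{P2.5}; it cites HKZ, and replicates HKZ's argument for its own model in Theorem \ref{T2.1} and Section \ref{sec:4.2}. That argument first derives, for $\ell^p$-differences of \emph{particle} trajectories, the SDDI $|\dot{\mathcal X}|\le\mathcal V$, $\dot{\mathcal V}\le-\alpha\mathcal V+\gamma e^{-\alpha t}\mathcal X$, and closes it with Lemma \ref{L2.6}. It then lifts the particle estimate to measures: approximate by empirical measures, relabel an optimal plan, and check that the constant is uniform in the approximation parameter.

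You instead work directly with the kinetic characteristics, pushing an optimal initial plan forward by the two flows. This removes the whole approximation/lifting step. The price is the Lagrangian representation $\mu_t=Z^\mu_t\#\mu_0$, which is standard here because compact support makes $L[\mu_t]$ locally Lipschitz.

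Your closing via $\mathcal X+\alpha^{-1}\mathcal V$ and Gr\"onwall is also more elementary and more robust than Lemma \ref{L2.6}. It only needs the forcing coefficient to be integrable in time, not to decay at the same rate $\alpha$ as the dissipation, and it tolerates forcing in $\mathcal X+\mathcal V$. In exchange, Lemma \ref{L2.6} gives explicit constants and the exponential decay of $\mathcal V$ itself.

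One point in Step 2 should be made explicit: where the hypothesis of equal momenta enters. Set $a=v^\mu_t-v^\nu_t$ and $G=|\cdot|_p^p$. Symmetrization and monotonicity of $\nabla G$ bound the alignment term by $-\frac{\kappa}{2}\phi(x_\infty)\iint(\nabla G(a)-\nabla G(a_*))\cdot(a-a_*)$. This equals $-p\kappa\phi(x_\infty)\mathcal V^p$ precisely because $\int a\,d\pi_0=0$ and $\nabla G(a)\cdot a=pG(a)$. So plain monotonicity suffices; the uniform convexity of Lemma \ref{L2.3} is not needed, and you get exactly your rate $\kappa\phi(x_\infty)$. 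Without equal momenta, $\mathcal V\ge|\int a\,d\pi_0|_p$ could not decay.
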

 Proposition \ref{P2.5} shows that a small perturbation of the initial data can be controlled uniformly in time, provided that the two initial measures have compact and similar supports, where similarity of supports is understood in terms of comparable diameters. This result, however, cannot be extended to the case where the initial position supports  are not contained in a compact set, since then $\cD_X(0)=\infty$. Furthermore, once a measure has noncompact support, the notion of similarity of supports, understood in terms of their diameters as in the compactly supported case, is no longer meaningful. In what follows, we explain why an analogous statement to Proposition \ref{P2.5} fails in the noncompact setting. We begin with two lemmas.
\begin{lemma} \label{L2.4}
	Consider two probability measures $\mu$ and $\nu$ in $\bbr$:
	\[\mu = \sum_{i=1}^{\infty} m_i \delta_{x_i},\quad \nu=\sum_{i=1}^{\infty} m_i \delta_{\bar{x}_i}.\]
	If $\{x_i\}_{i=1}^\infty$ and $\{\bar{x}_i\}_{i=1}^\infty$ are sets of distinct atoms with the following property:
	\[x_i<x_j\quad \Longleftrightarrow \quad \bar{x}_i<\bar{x}_j,\]
	then we have
	\[W_2^2(\mu,\nu) = \sum_{i=1}^\infty m_i|x_i-\bar{x}_i|^2.\]
\end{lemma}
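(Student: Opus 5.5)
The identity says that for one-dimensional measures the monotone (order-preserving) coupling is optimal for quadratic cost. I would prove it in two inequalities. The upper bound is immediate: the coupling $\pi_0 := \sum_i m_i \delta_{(x_i,\bar{x}_i)}$ has marginals $\mu$ and $\nu$, so
\[
W_2^2(\mu,\nu) \le \sum_{i=1}^\infty m_i |x_i-\bar{x}_i|^2 .
\]
If this sum is infinite, nothing more needs proof. Otherwise, it remains to show that no coupling does better.

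For the lower bound I would use the quantile characterization of one-dimensional optimal transport. Let $F_\mu, F_\nu$ be the distribution functions and $F_\mu^{-1}, F_\nu^{-1}$ their generalized inverses on $(0,1)$. It is classical (see \cite{k8}) that
\[
W_2^2(\mu,\nu)=\int_0^1 |F_\mu^{-1}(s)-F_\nu^{-1}(s)|^2\,ds .
\]
The key observation uses the order hypothesis. For each $i$,
\[
F_\mu(x_i)=\sum_{j:\,x_j\le x_i} m_j=\sum_{j:\,\bar{x}_j\le \bar{x}_i} m_j=F_\nu(\bar{x}_i),
\]
and likewise for the left limits $F_\mu(x_i^-)=F_\nu(\bar{x}_i^-)$. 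So the interval $I_i:=(F_\mu(x_i^-),F_\mu(x_i)]$ of length $m_i$ is exactly where $F_\mu^{-1}=x_i$, and the same interval is where $F_\nu^{-1}=\bar{x}_i$. These intervals are disjoint and their lengths sum to $1$, so they cover $(0,1)$ up to a null set. Hence the integral equals $\sum_i m_i|x_i-\bar{x}_i|^2$.

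The main obstacle is the infinite, possibly non-well-ordered atom set: the atoms may accumulate, so there need not be any "next" atom, and one must check that the intervals $I_i$ really exhaust $(0,1)$. This follows from $\sum_i m_i=1$, since a purely atomic measure leaves no continuous part of $F_\mu$. If I wanted to avoid citing the quantile formula, I would instead prove the lower bound by cyclical monotonicity. Any optimal plan for quadratic cost in $\mathbb{R}$ has monotone support. Because the order hypothesis makes the monotone rearrangement between these two atomic measures unique, such a plan must be supported on $\{(x_i,\bar{x}_i)\}$ and therefore equals $\pi_0$. Existence of an optimal plan follows by tightness and lower semicontinuity when second moments are finite. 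I would keep the quantile route as the main argument, since it handles the infinite case cleanly.
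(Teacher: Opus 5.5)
Your main argument is the paper's proof. The paper also invokes the quantile formula $W_2^2(\mu,\nu)=\int_0^1|F^{-1}(s)-G^{-1}(s)|^2\,ds$, citing Villani. It then uses the order hypothesis to show that both generalized inverses are constant, equal to $x_i$ and $\bar{x}_i$ respectively, on a common interval of length $m_i$. It takes $[S_i^-,S_i^+)$ with $S_i^-=\sum_{j:\,x_j<x_i}m_j$, which matches your $I_i$ up to endpoints, and notes that these intervals tile $[0,1)$ up to a null set.

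One small correction: your remark that ``nothing more needs proof'' when $\sum_i m_i|x_i-\bar{x}_i|^2=\infty$ is backwards. In that case the upper bound is vacuous, and the statement still requires the lower bound $W_2(\mu,\nu)=\infty$. This is harmless, because the quantile identity holds in $[0,\infty]$ and your interval computation never uses finiteness of the sum, so simply drop the case split.
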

\begin{proof}
	Let $F(t):=\mu\big((-\infty, t]\big), G(t):=\nu\big((-\infty, t]\big)$ and consider the generalized inverse:
	\[F^{-1}(t):=\inf \{x\in\bbr: F(x)>t\}.\]
	From \cite[Section 2.2]{V2}, we have
	\begin{equation} \label{B-11-2}
		W_2^2(\mu,\nu) = \int_0^1 |F^{-1}(s) - G^{-1}(s)|^2 ~ds.
	\end{equation}
	Now, for each $i\in\bbn$, we define 
	\[S_i^- := \sum_{j:~x_j<x_i}m_j = \sum_{j:~\bar{x}_j<\bar{x}_i} m_j, \quad S_i^+ :=S_i^- + m_i,\]
	then 
	\begin{equation*}
		F^{-1}(t) = x_i,\quad G^{-1}(t) = \bar{x}_i\quad  \text{for}\quad t\in [S_i^-,S_i^+).
	\end{equation*}
	Since $\{[S_i^-,S_i^+)\}_{i=1}^\infty$ forms a partition of $[0,1)$ up to a set of measure zero, we use \eqref{B-11-2} to derive
	\begin{align*}
		W_2^2(\mu, \nu) = \sum_{i=1}^\infty \int_{S_i^-}^{S_i^+}|F^{-1}(s)-G^{-1}(s)|^2 ds= \sum_{i=1}^\infty \int_{S_i^-}^{S_i^+}|x_i-\bar{x}_i|^2 ds= \sum_{i=1}^\infty m_i|x_i-\bar{x}_i|^2.
	\end{align*}
\end{proof}
In the following lemma, we deal with the ordering principle of the ICS model. 
\begin{lemma} \label{L2.5}
	Consider the ICS model on the real line $\bbr$. Suppose that the initial data $(X^\mathrm{in}, V^{\mathrm{in}})$ satisfies 
	\[x_i^{\mathrm{in}}<x_j^{\mathrm{in}}\quad \Longrightarrow\quad v_i^{\mathrm{in}}<v_j^{\mathrm{in}},\]
	and the set $X^{\mathrm{in}} = \{x_i^{\mathrm{in}}\}_{i=1}^\infty$ has no limit point in $\bbr$. We say that a pair $(i,j)$ is adjacent if 
	\[
	x_i<x_j\quad\text{and}\quad 
	\{k\in\mathbb N: x_i<x_k<x_j\}=\emptyset.
	\]
	Let $(X(t), V(t))$ be a solution to \eqref{B-2} with initial data $(X^{\mathrm{in}}, V^{\mathrm{in}})$. Define 
	\[
	t^*:=\inf\{t>0:\exists ~\text{adjacent pair }(i,j)~\text{with }v_i(t)=v_j(t)\}.
	\]
	Suppose we have $t^*>0$, then the following two assertions hold:
	\begin{enumerate}
		\item $t^*=\infty$ holds, i.e., for all adjacent pairs $(i,j)$, $v_i(t)<v_j(t)$ for all $t\geq 0$.\\
		\item The position ordering of the particles does not change for all $t\geq 0$:
		\[x_i^{\mathrm{in}}<x_j^{\mathrm{in}}\quad \Rightarrow\quad x_i(t)<x_j(t)\quad \forall~t\geq 0.\]
	\end{enumerate}
\end{lemma}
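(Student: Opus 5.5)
The plan is a continuity (first-hitting-time) argument on $[0,t^*)$, followed by a contradiction showing that $t^*$ cannot be finite. In one dimension the sender-network ICS velocity equation reads
\[
\dot v_j - \dot v_i = \kappa\sum_k m_k\big[\phi(|x_k-x_j|)(v_k-v_j)-\phi(|x_k-x_i|)(v_k-v_i)\big].
\]
The aim is to show that $v_j-v_i$ cannot reach $0$ for an adjacent pair $(i,j)$ while all adjacent pairs keep the strict velocity ordering.

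\textbf{Ordering persists on $[0,t^*)$.} Since $X^{\mathrm{in}}$ has no limit point, every particle has well-defined neighbours and the order type is that of a subset of $\bbz$. On $[0,t^*)$ each adjacent pair satisfies $v_i<v_j$, hence $\frac{d}{dt}(x_j-x_i)=v_j-v_i>0$. The gap between adjacent particles therefore grows, and adjacent particles never collide.

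Any pair with $x_i^{\mathrm{in}}<x_j^{\mathrm{in}}$ is joined by a finite chain of adjacent pairs, because the set has no limit point. So the whole position ordering is preserved on $[0,t^*)$, and the set of adjacent pairs does not change in time. Chaining the adjacent inequalities along the same finite chain also gives $v_i(t)<v_j(t)$ whenever $x_i(t)<x_j(t)$, for $t<t^*$.

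\textbf{$t^*$ cannot be finite.} Suppose $t^*<\infty$. By continuity there is an adjacent pair $(i,j)$ with $v_i(t^*)=v_j(t^*)$ and $v_i\le v_j$ on $[0,t^*]$. I would show that $w:=v_j-v_i$ satisfies a linear differential inequality $\dot w\ge -C(t)\,w$ with $C$ locally bounded; Gr\"onwall then gives $w(t^*)\ge w(0)e^{-\int_0^{t^*}C}>0$, a contradiction.

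To get the inequality, split the sum over $k$ into three groups.
\begin{enumerate}
\item For $k$ with $x_k\le x_i$ we have $v_k\le v_i\le v_j$. Then
\[
\phi(|x_k-x_j|)(v_k-v_j)-\phi(|x_k-x_i|)(v_k-v_i)=\phi(|x_k-x_j|)(v_k-v_i) - \phi(|x_k-x_i|)(v_k-v_i) - \phi(|x_k-x_j|)\,w .
\]
Here $(v_k-v_i)\le0$ and $\phi(|x_k-x_j|)\le\phi(|x_k-x_i|)$, because $\phi$ is decreasing and $x_j$ lies farther from $x_k$. Hence the first two terms together are $\ge 0$, and the whole expression is $\ge -\phi\,w\ge -w$.
\item For $k$ with $x_k\ge x_j$ the argument is symmetric, again giving a lower bound $\ge -w$.
\item There is no $k$ strictly between $x_i$ and $x_j$, by adjacency; the cases $k=i$ and $k=j$ contribute $-\phi\,w$ directly.
\end{enumerate}
Summing with the weights $m_k$ gives $\dot w\ge -2\kappa w$ on $[0,t^*]$, using only $\sum_k m_k=1$ and $\phi\le 1$.

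\textbf{Conclusion.} This yields $w(t^*)\ge w(0)e^{-2\kappa t^*}>0$, contradicting $v_i(t^*)=v_j(t^*)$. Therefore $t^*=\infty$, which is assertion (1), and the ordering argument of the first step then extends to all $t\ge0$, which is assertion (2).

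\textbf{Main obstacle.} I expect the delicate step to be justifying the sign bookkeeping in the infinite sum. The ordering $v_k\le v_i$ for all $x_k\le x_i$ must hold up to and including $t^*$ (by continuity from $[0,t^*)$), and the series must be differentiable term by term. The latter follows from the uniform velocity bound in Proposition \ref{P2.1} and dominated convergence.
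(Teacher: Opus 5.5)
Your proposal follows the paper's proof of Lemma~\ref{L2.5} step for step. Both regroup $\dot v_j-\dot v_i$ so that the sign of $\phi(|x_k-x_j|)-\phi(|x_k-x_i|)$ matches the sign of the velocity difference, then apply Gr\"onwall and derive a contradiction at $t^*$. Your constant $2\kappa$ is only looser than the paper's $\kappa$, and your chaining through adjacent pairs makes explicit what the paper leaves implicit in (2).

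\textbf{The gap.} You also inherit the paper's gap. It sits in your sentence ``by continuity there is an adjacent pair $(i,j)$ with $v_i(t^*)=v_j(t^*)$'', which is the paper's ``fix an adjacent pair $(i,j)$ with $v_i(t^*)=v_j(t^*)$''. Continuity gives this only for each \emph{fixed} pair: its first coincidence time $\tau_{ij}$ is attained. But $t^*=\inf_{(i,j)}\tau_{ij}$ is an infimum over infinitely many pairs and need not be attained. Nothing in the argument excludes $\tau_{i_nj_n}\downarrow t^*$ along adjacent pairs escaping to infinity with $v^{\mathrm{in}}_{j_n}-v^{\mathrm{in}}_{i_n}\to0$, which is exactly the kind of data used in Proposition~\ref{P2.6}. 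Your own Gr\"onwall bound shows every adjacent pair is still strictly ordered at $t^*$. So under the assumption $t^*<\infty$, the argument only shows that the infimum is not attained, which is no contradiction. Restarting the flow at $t^*$ does not help either: in that scenario the restarted flow violates exactly the hypothesis $t^*>0$. The obstacles you flagged (signs at $t^*$, termwise differentiation) are harmless. The real obstacle is uniformity over infinitely many pairs.

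\textbf{A way to close it.} Write $k\prec l$ if $x_k^{\mathrm{in}}<x_l^{\mathrm{in}}$, and set
\[
D(t):=\sup_{k\prec l}\big(v_k(t)-v_l(t)\big)_+,\qquad E(t):=\sup_{k\prec l}\big(x_k(t)-x_l(t)\big)_+ .
\]
These are Lipschitz (by the uniform bounds on $\dot x_k,\dot v_k$), vanish at $t=0$, and satisfy $E(t)\le\int_0^tD\,ds$.

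For $r\prec k\prec l$ one checks $|x_r-x_k|\le|x_r-x_l|+E$, and for $k\prec l\prec r$ one checks $|x_r-x_l|\le|x_r-x_k|+E$. Now write
\[
\dot v_k-\dot v_l=-\kappa\sum_r m_r\phi(|x_r-x_l|)(v_k-v_l)+\kappa\sum_r m_r\big(\phi(|x_r-x_k|)-\phi(|x_r-x_l|)\big)(v_r-v_k).
\]
Whenever $v_k>v_l$, the first sum is nonpositive, and each product $\big(\phi(|x_r-x_k|)-\phi(|x_r-x_l|)\big)(v_r-v_k)$ is at most $2D+2\mathcal{P}_\infty\|\phi\|_{\mathrm{Lip}}E$. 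Integrating the a.e.\ derivative of $(v_k-v_l)_+$ and taking the supremum over pairs gives
\[
D(t)\le2\kappa\int_0^t\big(D+\mathcal{P}_\infty\|\phi\|_{\mathrm{Lip}}E\big)\,ds .
\]
Gr\"onwall then gives $D\equiv E\equiv0$.

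Weak ordering of positions and velocities is therefore known on all of $[0,\infty)$. Your three-group estimate then holds for all $t$ and gives $v_j(t)-v_i(t)\ge(v_j^{\mathrm{in}}-v_i^{\mathrm{in}})e^{-2\kappa t}>0$ for every adjacent pair, which is (1). Your chaining argument then gives (2). This route does not even need the hypothesis $t^*>0$.
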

\begin{proof}
	(1) Suppose $t^*<\infty$. Then for $0\le t<t^*$ and for any adjacent pair $(i,j)$, we have
		\[\frac{d}{dt}(x_j(t)-x_i(t)) = v_j(t)-v_i(t)>0,\]
		and 
		\begin{equation} \label{B-11-2-0}
			x_j(t)-x_i(t) = x_j^{\mathrm{in}} - x_i^{\mathrm{in}}+\int_0^t (v_j(s)-v_i(s))ds>0.
		\end{equation}
		Therefore, none of the adjacent particles collide with each other and the position ordering remains the same as the initial ordering on $[0,t^*)$. Now, fix an adjacent pair $(i,j)$ with 
		\begin{equation} \label{B-11-2-1}
			v_i(t^*) = v_j(t^*).
		\end{equation}
		By \eqref{B-2}, we have
		\begin{align*}
			\dot v_j-\dot v_i &=\kappa\sum_{k=1}^\infty m_k \phi(|x_k-x_j|)(v_k-v_j)
			-\kappa\sum_{k=1}^\infty m_k \phi(|x_k-x_i|)(v_k-v_i) \\
			&=-\kappa\left(\sum_{k=1}^\infty m_k\phi(|x_k-x_j|)\right)(v_j-v_i)
			+\kappa\sum_{k=1}^\infty m_k\Big(\phi(|x_k-x_j|)-\phi(|x_k-x_i|)\Big)(v_k-v_i).
		\end{align*}
		If $k=i,j$ and $0\le t<t^*$, one can easily check
		\[\Big(\phi(|x_k-x_j|)-\phi(|x_k-x_i|)\Big)(v_k-v_i)\geq 0.\]
		For $k\neq i,j$ and $0\le t<t^*$, $x_k(t)$ satisfies
		\[x_k(t)<x_i(t)<x_j(t)\quad\text{and}\quad v_k(t)<v_i(t)<v_j(t),\]
		or
		\[x_i(t)<x_j(t)<x_k(t)\quad\text{and}\quad v_i(t)<v_j(t)<v_k(t).\]
		In both cases, since $\phi$ is non-increasing, we have
		\[\Big(\phi(|x_k-x_j|)-\phi(|x_k-x_i|)\Big)(v_k-v_i)\geq 0, \]
		which leads to 
		\[\dot{v}_j(t)-\dot{v}_i(t)\geq -\kappa\underbrace{\left(\sum_{k=1}^\infty m_k\phi(|x_k-x_j(t)|)\right)}_{\leq 1}(v_j(t)-v_i(t))
		\geq -\kappa(v_j(t)-v_i(t)),\quad 0\le t<t^*.
		\]
		We apply Gr\"onwall's lemma to derive
		\[
		v_j(t)-v_i(t)\geq (v_j(0)-v_i(0))e^{-\kappa t}>0,
		\qquad\text{for }~0\le t<t^*.
		\]
		Then $t\to t^*$ gives
		\[
		v_j(t^*)-v_i(t^*)\geq (v_j(0)-v_i(0))e^{-\kappa t^*}>0,
		\]
		which contradicts \eqref{B-11-2-1}. Therefore, $t^*=\infty$ and for all adjacent pairs $(i,j)$, we have
		\[v_i(t)<v_j(t),\quad \forall~t\geq 0.\]
		(2) Since $t^*=\infty$, for any adjacent pair $(i,j)$, \eqref{B-11-2-0} holds for all $t\geq 0$, which gives the desired result.
	\end{proof}
	\begin{remark}
		Unlike in the finite CS model, the condition $t^*>0$ in Lemma \ref{L2.5} is not automatic in the infinite-particle setting. In the following proposition, we show that $t^*>0$ holds for the particular example constructed in Proposition \ref{P2.6} below.
	\end{remark}
In the following proposition, we construct initial probability measures with noncompact spatial support using infinitely many atoms. We show that even strong moment conditions, such as finiteness of an exponential moment, are not sufficient to guarantee uniform-in-time stability of solutions.
\begin{proposition} \label{P2.6}
	There exists a probability measure $\mu_0\in\cP(\bbr^2)$ and a sequence of probability measures $(\nu_0^n)_{n\in\bbn}\subset \cP(\bbr^2)$,
	such that:
	\begin{enumerate}
		\item $\mu_0$ and each $\nu_0^n$ have noncompact spatial support, zero total momentum, and finite exponential moment:
		\[
		\int_{\bbr^2} e^{|x|}~\mu_0(dx,dv)<\infty,\quad \int_{\bbr^2}e^{|x|}~\nu_0^n(dx,dv)<\infty,\quad \forall~n\in\bbn.
		\]
		\item $W_2(\mu_0,\nu_0^n)\xrightarrow{n\to\infty}0$.\\
		\item For every $\kappa>0$, if $\mu_t$ and $\nu_t^n$ denote the corresponding measure-valued solutions of \eqref{B-11-1} with initial data $\mu_0$ and $\nu_0^n$, then
		\[
		\frac{\sup_{t\ge0}W_2(\mu_t,\nu_t^n)}{W_2(\mu_0,\nu_0^n)}
		\xrightarrow{n\to\infty}\infty.
		\]
	\end{enumerate}
	In particular, for this fixed initial datum $\mu_0$ and for every $\kappa>0$, there does not exist a common finite constant $G$ such that for all perturbations $\nu_0$ sufficiently close to $\mu_0$,
	\[
	\sup_{t\ge0}W_2(\mu_t,\nu_t)\le G~W_2(\mu_0,\nu_0).
	\]
\end{proposition}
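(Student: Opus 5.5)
The plan is to build $\mu_0$ as an infinite atomic measure on $\bbr\times\bbr$ with ordered atoms, and $\nu_0^n$ as a perturbation of the velocity of a single far-away atom (or a symmetric pair). The mechanism is the one behind Proposition \ref{P2.3}: a far atom barely feels the flock for a long time, so a velocity perturbation of size $\delta$ becomes a position discrepancy of size about $\delta t$, and this discrepancy can be detected in $W_2$.

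\textbf{Construction.} Take atoms indexed by $i\in\bbn$ in symmetric pairs, in the spirit of the Remark after Proposition \ref{P2.3}:
\[
x^{\mathrm{in}}_{\pm k}=\pm a_k,\qquad v^{\mathrm{in}}_{\pm k}=\pm b_k,\qquad m_{\pm k}=\tfrac{c}{2}\,e^{-a_k}k^{-2}.
\]
Here $a_k\uparrow\infty$ increases fast (e.g.\ $a_k=k^2$), so there is no limit point, and $0<b_k\uparrow b_\infty$ is strictly increasing. This gives:
\begin{itemize}
\item a finite exponential moment;
\item zero momentum, by symmetry;
\item noncompact spatial support;
\item the monotonicity $x_i^{\mathrm{in}}<x_j^{\mathrm{in}}\Rightarrow v_i^{\mathrm{in}}<v_j^{\mathrm{in}}$.
\end{itemize}
For $\nu_0^n$, replace $b_{n}$ by $b_n+\delta_n$ and $-b_n$ by $-b_n-\delta_n$. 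Choose $\delta_n<\tfrac12\min(b_{n+1}-b_n,b_n-b_{n-1})$ so that both momentum and monotonicity are preserved. The exponential moment stays finite. Coupling identical atoms trivially gives $W_2(\mu_0,\nu_0^n)\le \sqrt{2m_n}\,\delta_n\to0$.

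\textbf{Lower bound on $W_2(\mu_t,\nu^n_t)$.} Since both data are atomic (no density), the measure-valued solutions are the empirical measures of the ICS flows \eqref{B-2}. I want to apply Lemma \ref{L2.5} to both flows to get preservation of the position ordering. The orderings of $\mu_t$ and $\nu_t^n$ coincide because they coincide initially. Then, projecting to the $x$-marginal (which does not increase $W_2$) and using Lemma \ref{L2.4},
\[
W_2^2(\mu_t,\nu_t^n)\ \ge\ \sum_i m_i|x_i(t)-\bar x_i(t)|^2\ \ge\ 2m_n|x_n(t)-\bar x_n(t)|^2 .
\]
It therefore suffices to show $x_n(t)-\bar x_n(t)\ge \tfrac12\delta_n T_n$ at a time $T_n$ with $T_n\to\infty$. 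The ratio is then at least $c\,T_n\to\infty$, for every fixed $\kappa$.

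\textbf{Growth of the discrepancy.} Write $w=v_n-\bar v_n$. Then
\[
\dot w=-\kappa\,a(t)\,w+E(t),\qquad |E(t)|\le 2\kappa\,b_\infty\sum_k m_k\big(\phi(|x_k-x_n|)+\phi(|\bar x_k-\bar x_n|)\big),
\]
where $0\le a(t)\le$ the same kind of sum. As in the proof of Proposition \ref{P2.3}, I split the sum into $|x_k^{\mathrm{in}}-a_n|\le R$ and its complement. Using the exponential tail \eqref{B-3} and the displacement bound $2b_\infty t$, one gets $\sup_{[0,T]}(|E|+a)\le \varepsilon_n(T)$, with $\varepsilon_n(T)\to0$ as $n\to\infty$ for each fixed $T$. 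I then pick $T_n\to\infty$ slowly enough that $\kappa T_n\varepsilon_n(T_n)\le \tfrac14$ and $T_n\varepsilon_n(T_n)\le\delta_n/4$. The second condition is compatible with the choice of $\delta_n$ because $\varepsilon_n$ decays like $e^{-a_n}$, far faster than the gaps $b_{n+1}-b_n$, say with $b_k=1-1/k$. Gr\"onwall then gives $w\ge \delta_n/2$ on $[0,T_n]$, and integrating yields the claim.

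\textbf{Main obstacle.} The step I expect to be hardest is verifying the hypothesis $t^*>0$ of Lemma \ref{L2.5} in this infinite setting. Adjacent velocity gaps tend to $0$, so there is no uniform room a priori. I would first try to show $\dot v_j-\dot v_i\ge-\kappa(v_j-v_i)$ directly on a short interval. The idea is to note that for far pairs the forcing is bounded by $\kappa\varepsilon$-type tails that are much smaller than $b_{k+1}-b_k$, by the choice $a_k=k^2$, and to use a continuity/bootstrap argument on the set of adjacent pairs. Failing that, I would instead bound $W_2$ from below by a Lipschitz test function localized near the isolated atom $x_n(t)$, via $W_1\le W_2$; this avoids the ordering lemma altogether.
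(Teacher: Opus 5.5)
Your architecture matches the paper's: symmetric atomic pairs, a velocity kick on the $n$-th pair, order preservation via Lemmas \ref{L2.4} and \ref{L2.5}, and a window $[0,T_n]$ on which the kicked atom is nearly free, giving a ratio of order $T_n$. However, the quantitative step that makes it work fails for your parameters.

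The claim that $\varepsilon_n(T)$ decays like $e^{-a_n}$ confuses the mass tail with the interaction strength. In the splitting borrowed from Proposition \ref{P2.3}, the tail \eqref{B-3} only controls the mass lying \emph{near} the kicked atom. The far part contains the order-one mass near the origin, at distance about $a_n$, which acts with strength $\phi(a_n)\simeq a_n^{-\beta}$. Your bound on $|E|+a$ must dominate this contribution, so $\varepsilon_n(T)\gtrsim a_n^{-\beta}$ on the range $T\lesssim a_n$ where the displacement bound is usable. This is only polynomial in $a_n$; indeed the true deceleration of $v_n$ is of this order.

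With $a_k=k^2$ and $b_k=1-1/k$, your kick satisfies $\delta_n\lesssim n^{-2}$. The requirement $T_n\varepsilon_n(T_n)\le\delta_n/4$ then forces $T_n\lesssim n^{2\beta-2}$, which does not tend to infinity for $\beta\le 1$. The proposition must hold for every $\beta>0$. Your planned proof of $t^*>0$ rests on the same wrong rate. On a short time interval the forcing on atom $k$ is of order $\kappa k^{-2\beta}$, while the adjacent velocity gap is of order $k^{-2}$. So for $\beta<1$ the infimum of gap over forcing is zero, and no uniform $t_0>0$ is produced.

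The missing ingredient is a spacing that beats the algebraic decay of $\phi$ for every $\beta>0$, i.e.\ super-polynomially growing positions. This is what the paper's choice does: $R_j=e^{j^2}$, velocity gaps $2^{-j-1}$, kick $2^{-2n}$, window $T_n=2^{2n}$. Then $|R_j-R_k|\ge(1-e^{-1})R_j$ for all $k\neq j$, so on the relevant time intervals each atom stays at distance $\gtrsim R_j$ from \emph{every} other atom. Hence $|\dot v_j|\le 2\kappa\phi(cR_j)\simeq e^{-\beta j^2}$ directly, with no tail splitting. This is negligible compared with $2^{-j}$, which gives a uniform $t_0>0$ and hence $t^*>0$. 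It is also negligible compared with the squared kick $2^{-4n}$, which gives a position gap $\ge\tfrac12$ at time $T_n$.

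Your fallback would not rescue the argument either. Set $D=|x_n(t)-\bar x_n(t)|$. A Lipschitz test function together with $W_1\le W_2$ yields at best $W_2(\mu_t,\nu_t^n)\gtrsim m_nD$ instead of $\sqrt{m_n}\,D$. The ratio is then only $\gtrsim\sqrt{m_n}\,T_n$, which tends to zero, because the exponential moment forces $m_n=o(e^{-a_n})$ while $T_n\lesssim a_n$.

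If you want to avoid the ordering lemmas, argue directly in $W_2$. Every coupling must move the mass $m_n$ sitting at $(\bar x_n(t),\bar v_n(t))$ by at least its distance to $\operatorname{supp}\mu_t$. That distance is $\ge\min\{D,d_n(t)\}$ with $d_n(t):=\inf_{k\neq n}|x_k(t)-\bar x_n(t)|$, hence $W_2^2(\mu_t,\nu_t^n)\ge m_n\min\{D,d_n(t)\}^2$.

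A minor point: $b_k=1-1/k$ gives $b_1=0$, so the atoms at $\pm a_1$ have tied velocities. This violates the strict monotonicity required in Lemma \ref{L2.5}; shift the index.
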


\begin{proof}
	Before moving further, we summarize our proof strategy in three steps.
	
\begin{itemize}
	\item Step A: First, we construct an initial measure $\mu_0$ and its perturbations $\nu_0^n$ using infinitely many atoms where
	\[W_2(\mu_0,\nu_0^n)\xrightarrow{n\to\infty}0.\]
	\item Step B: 	Next, we show that for each particle system corresponding to the measures $\mu_0$ and $\nu_0^n$, the ordering of the particles does not change over time.
	\item Step C: 	 Finally, we conclude our proof by showing 
	\[\frac{\sup_{t\ge0}W_2(\mu_t,\nu_t^n)}{W_2(\mu_0,\nu_0^n)}
	\xrightarrow{n\to\infty}\infty.\]
\end{itemize} 
Next, we proceed with our proof step by step.
	
	\vspace{.3cm}
	\noindent$\bullet$ \textbf{Step A: Construct the perturbation sequence. }For each $j\in\bbn$, set
	\[R_j:=e^{j^2},\qquad R_{-j}:=-R_j,\qquad u_j:=1-2^{-j},\qquad u_{-j} = -u_j.\]
	Let
	\[
	S:=2\sum_{j=1}^\infty e^{-2R_j},\qquad m_j:=S^{-1}e^{-2R_j}=m_{-j}\quad \text{for}\quad j\neq 0 .
	\]
	For convenience, we set
	\[ R_0=u_0=m_0=0, \]
	and define
	\[
	\mu_0
	:=\sum_{j\in\bbz} m_{j}\delta_{(R_j,u_j)}.
	\]
	Since 
	\[
	\sum_{j\in\bbz}m_j=2\sum_{j=1}^\infty m_j =1\quad \text{and}\quad \lim_{j\to\pm\infty} R_j=\pm\infty,
	\]
	$\mu_0$ is a probability measure with noncompact spatial support. By symmetry, its total momentum vanishes:
	\[
	\int_{\mathbb R^2} v~d\mu_0(x,v)
	=
	\sum_{j=1}^\infty m_{j}u_j+\sum_{j=1}^\infty m_{-j}u_{-j}=\sum_{j=1}^\infty m_{j}u_j-\sum_{j=1}^\infty m_{j}u_{j}=0,
	\]
	and has a finite exponential moment:
	\[
	\int_{\mathbb R^2} e^{|x|}~\mu_0(dx,dv)
	=
	2\sum_{j=1}^\infty m_j e^{R_j}
	=
	2S\sum_{j=1}^\infty e^{-R_j}<\infty.
	\]
	Now, for each $n\in\mathbb N$, let 
	\[\e_n:=2^{-2n}\]
	and define $\nu_0^n$ by perturbing only the $n$-th symmetric pair of $\mu_0$:
	\[
	\nu_0^n
	:=
	\mu_0  - m_n\delta_{(R_n,u_n)} - m_{-n}\delta_{(R_{-n},u_{-n})}
	+ m_n\delta_{(R_n,u_n+\e_n)} + m_{-n}\delta_{(R_{-n},u_{-n}-\e_n)}.
	\]
	Then $\nu_0^n$ is again a probability measure with noncompact spatial support. Moreover, its total momentum is still zero:
	\[
	m_n(u_n+\e_n) + m_{-n}(u_{-n}-\e_n)=0.
	\]
	Since $\nu_0^n$ has the same spatial support as $\mu_0$, it also satisfies the moment condition:
	\[
	\int_{\mathbb R^2} e^{|x|}~\nu_0^n(dx,dv)<\infty.
	\]
	Next, we estimate the initial Wasserstein distance. By coupling each atom of $\mu_0$ with the atom of $\nu_0^n$ at the same spatial position, we obtain
	\[
	W_2^2(\mu_0,\nu_0^n) \leq  m_n\e_n^2+m_{-n}\e_n^2 = 2m_n\e_n^2.
	\]
	Therefore
	\[
	W_2(\mu_0,\nu_0^n)\leq \sqrt{2m_n}~\e_n \xrightarrow{n\to\infty}0.
	\]
	
	\vspace{.5cm}
	\noindent$\bullet$ \textbf{Step B: Order relationship between particles' position and velocity.} Now, let $(x_j(t),v_j(t))_{j\in\bbz}$ and $(\bar x_j(t),\bar v_j(t))_{j\in\bbz}$ denote the particle trajectories corresponding to $\mu_0$ and $\nu_0^n$, respectively. For instance, $(x_j(t), v_j(t))$ corresponds to the position and velocity of the particle with initial data $(R_j, u_j)$ and weight $m_j$, for all $j\in\bbz$. We first show that for each of these systems, the time $t^*$ in Lemma \ref{L2.5} is strictly positive. Once this is established, Lemma \ref{L2.5} implies that the position ordering is preserved for all time in both systems. \\\\
	 Recall that Proposition \ref{P2.1} gives
	\begin{equation} \label{B-11-3}
		|v_j(t)|\leq 1,\qquad |\bar v_j(t)|\leq 1\qquad \forall~ j\in\bbz, \quad \forall~ t\geq0.
	\end{equation}
	We consider the system $(x_j(t),v_j(t))$. Since $\{R_j\}_{j\in\bbz} = \{\pm e^{j^2}\}_{j\in\bbn}\cup \{0\}$, we have 
	\[e = \inf_{j\neq k}|R_j-R_k|.\]
	Then for $0\leq t\leq \frac{e}{4}$, we use \eqref{B-11-3} to get
	\begin{equation} \label{B-11-3-1}
		|x_j(t)-x_k(t)|\geq |R_j-R_k|- 2t \geq \frac{1}{2}|R_j-R_k|, \quad \forall~j\neq k.
	\end{equation}
	Fix $j\in\bbn$, a particle index beginning on the positive side. We use \eqref{B-11-3} and \eqref{B-11-3-1} to derive
	\begin{align*}
		|\dot{v}_j(t)|&\leq \kappa\sum_{k\neq j} m_k \phi(|x_k(t)-x_j(t)|)|v_k(t)-v_j(t)|\\
		&\leq 2\kappa \sum_{k\neq j} m_k \phi(|x_k(t)-x_j(t)|)\\
		&\leq 2\kappa \sum_{k\neq j} m_k \phi\left(\frac{1}{2}|R_j-R_k|\right).
	\end{align*}
	Note that since $R_j = e^{j^2}$, we have
	\[|R_j-R_k|\geq \left(1-\frac{1}{e}\right)R_j,\quad \forall~k\neq j.\]
This yields
	\begin{equation*}
		|\dot{v}_j(t)| \leq 2\kappa \phi \left(\frac{e-1}{2e}R_j\right):=2\kappa B_j \simeq R_j^{-\beta} = e^{-\beta j^2}.
	\end{equation*}
	Now for adjacent particles on the positive side, we have
	\[v_{j+1}(0) - v_j(0) = u_{j+1} - u_j = 2^{-j-1}.\]
	Thus, for $0\leq t\leq \frac{e}{4}$, we have
	\begin{align} \label{B-11-3-0}
		\begin{aligned}
		v_{j+1}(t)-v_j(t)&\geq u_{j+1}-u_j -|v_{j+1}(t) - u_{j+1}| - |v_j(t)-u_j|\\
		&\geq 2^{-j-1}-2\kappa (B_{j+1}+B_j)t.
		\end{aligned}
	\end{align}
	Since 
	\[\frac{2^{-j-1}}{B_{j+1}+B_j} \geq \frac{2^{-j-1}}{2B_{j}}\gtrsim \frac{e^{\beta j^2}}{2^{j}}\xrightarrow{j\to\infty}\infty,\]
	we can define
	\[t_0:= \min \left\{\frac{e}{4},\frac{1}{4\kappa}\inf_{j\in\bbn}\frac{2^{-j-1}}{B_{j+1}+B_{j}}\right\}>0.\]
	Then by \eqref{B-11-3-0}, for all $j\in\bbn$ and $t\in[0,t_0)$, we have
	\begin{equation*}
		v_{j+1}(t)-v_j(t)> 0,
	\end{equation*}
	i.e., for all adjacent particles on the positive side, their velocity does not become the same for some positive time. By symmetry, the same conclusion holds on the negative side. Therefore, $t^*>0$ for the system $(x_j(t), v_j(t))$. The same argument applies to $(\bar{x}_j(t), \bar{v}_j(t))$. Hence, by Lemma \ref{L2.5}, position ordering is preserved for all time in both systems.
	
	\vspace{.3cm}
	\noindent$\bullet$ \textbf{Step C: Blow up of the ratio between whole-time distance and initial distance.} We now fix $\kappa>0$. Since
	\[
	R_n=e^{n^2}\gg 2^{2n}=\e_n^{-1}=:T_n,
	\]
	for large $n>0$, we have
	\[
	T_n\leq \frac{e-2}{4e}R_n.
	\]
	Then by \eqref{B-11-3}, it follows that for every $t\in[0,T_n]$, we have
	\begin{align*}
		|x_n(t)-x_k(t)|&\geq |R_n-R_{k}| -\int_0^t |v_n(s)|ds - \int_0^t |v_k(s)|ds\\
		& \geq \left(1-\frac{1}{e}\right)R_n - \frac{e-2}{2e}R_n = \frac{R_n}{2},\quad \forall~k\neq n,
	\end{align*}
	i.e., the particles initially located at $R_n$ (and $R_{-n}$) stay at distance at least $R_n/2$ from all the other particles, in both systems $(x_j(t),v_j(t))_{j\in\bbz}$ and $(\bar x_j(t),\bar v_j(t))_{j\in\bbz}$. Therefore, we have
	\[
	|\dot v_n(t)| \leq \kappa \sum_{j\in\bbz} m_j \phi(|x_j-x_n|)|v_j-v_n|
	\leq
	\kappa \phi\left(\frac{R_n}{2}\right)\sup_{j\in\bbn}\big(|v_j|+|v_n| \big)
	\leq2\kappa \phi\left(\frac{R_n}{2}\right),
	\]
	for all $t\in[0,T_n]$, where we again used \eqref{B-11-3} in the last inequality. The same estimates hold for $v_{-n}(t)$, $\bar{v}_n(t)$, and $\bar{v}_{-n}(t)$. Since
	\[
	\phi\left(\frac{R_n}{2}\right)
	=
	\frac{1}{\big(1+R_n^2/4\big)^{\beta/2}}
	\lesssim R_n^{-\beta}
	=
	e^{-\beta n^2}\quad \text{and}\quad	\e_n^2 = 2^{-4n},
	\]
	we have
	\[
	\lim_{n\to\infty}\frac{\phi(R_n/2)}{\e_n^2}=0.
	\]
	Therefore, for all sufficiently large $n$, we have
	\[
	|\dot{v}_n(t)|\leq 2\kappa \phi\left(\frac{R_n}{2}\right)\leq \frac{\e_n^2}{4},\quad \forall ~0\leq t\leq T_n.
	\]
	Hence for $0\leq t\leq T_n=\e_n^{-1}$, we have
	\[
	|v_n(t)-u_n|\leq \int_0^t\left|\dot{v}_n(s)\right|ds \leq \frac{\e_n^2}{4}t \leq \frac{\e_n}{4},
	\]
	and similarly
	\[
	|\bar v_n(t)-(u_n+\e_n)|\leq \frac{\e_n}{4}.
	\]
	It follows that
	\begin{align} \label{B-11-4}
		\begin{aligned}
			\bar v_n(t)-v_n(t)&\geq \left(u_n+\frac{3}{4}\e_n\right) - \left(u_n + \frac{1}{4}\e_n\right) =\frac{\e_n}{2},\quad \forall~ t\in[0,T_n].
		\end{aligned}
	\end{align}
	Likewise, for the $n$-th negative particles, we have
	\[
	v_{-n}(t)-\bar v_{-n}(t)\ge \frac{\e_n}{2}
	\qquad \forall~ t\in[0,T_n].
	\]
	Integrating \eqref{B-11-4} leads to
	\begin{equation} \label{B-11-5}
		\bar x_n(T_n)-x_n(T_n)
		=
		\int_0^{T_n} \big(\bar v_n(s)-v_n(s)\big)~ds
		\ge \frac{\e_n}{2}~T_n
		= \frac{1}{2}.
	\end{equation}
	Similarly, we have
	\begin{equation} \label{B-11-6}
		x_{-n}(T_n)-\bar x_{-n}(T_n)\ge \frac{1}{2}.
	\end{equation}
	Now let $\mathrm{Pr}:(x,v)\mapsto x$ denote the projection onto the space variable. Then we have
	\[
	W_2(\mathrm{Pr}_{\#}\mu_{T_n},\mathrm{Pr}_{\#} \nu^n_{T_n}) \leq W_2(\mu_{T_n},\nu_{T_n}^n).
	\]
	On the other hand, by Lemma \ref{L2.4} and Lemma \ref{L2.5}, we have
	\begin{align} \label{B-11-7}
		\begin{aligned}
		& W_2^2(\mathrm{Pr}_\# \mu_t, \mathrm{Pr}_{\#} \nu^n_t) \\
		& \hspace{0.5cm} =W_2^2 \left(\sum_{j\in\bbz} m_j\delta_{x_j(t)}, \sum_{j\in\bbz} m_j \delta_{\bar{x}_j(t)}\right) = \sum_{j\in\bbz} m_j|x_j(t)-\bar{x}_j(t)|^2, \quad \forall~t\geq 0.
		\end{aligned}
	\end{align}
	We combine \eqref{B-11-5}, \eqref{B-11-6}, and \eqref{B-11-7} to get
	\begin{equation*}
		W_2^2(\mathrm{Pr}_{\#}\mu_{T_n},\mathrm{Pr}_{\#} \nu^n_{T_n})\geq m_n\left(\frac{1}{2}\right)^2 + m_{-n}\left(\frac{1}{2}\right)^2 = \frac{m_n}{2}.
	\end{equation*}
	Thus
	\[
	\sup_{t\ge0}W_2(\mu_t,\nu_t^n)\geq
	W_2(\mathrm{Pr}_{\#}\mu_{T_n},\mathrm{Pr}_{\#} \nu^n_{T_n})
	\geq\sqrt{\frac{m_n}{2}}.
	\]
	Therefore, for every fixed $\kappa>0$ and all sufficiently large $n$,
	\[
	\frac{\sup_{t\ge0}W_2(\mu_t,\nu_t^n)}{W_2(\mu_0,\nu_0^n)}
	\geq \sqrt{\frac{m_n}{2}}\frac{1}{\sqrt{2m_n}\e_n}
	=\frac{1}{2\e_n}=2^{2n-1},
	\]
	which gives
	\[
	\frac{\sup_{t\ge0}W_2(\mu_t,\nu_t^n)}{W_2(\mu_0,\nu_0^n)}
	\xrightarrow{n\to\infty}\infty.
	\]
	\end{proof}
	Motivated by these limitations of original CS model in a spatially extended setting, we introduce a modified ICS-type model in a metric space $(\bbr^d, |\cdot|_p)$ in which the communication weight depends on the sum of pairwise spatial moment:
	\begin{align}
		\begin{cases} \label{B-12}
			\displaystyle \frac{dx_i}{dt} = v_i, \quad t>0,\quad i \in {\mathbb N},\vspace{6pt}\\
			\displaystyle\frac{dv_i}{dt} = \kappa\sum_{j=1}^\infty m_{j}\phi(\bw(X))(v_j-v_i),\vspace{6pt}\\
			\displaystyle\bw(X)=\sum_{i,j=1}^\infty m_im_j|x_j-x_i|_p,	\vspace{6pt}\\
			\displaystyle	(x_i,v_i) \Big|_{t = 0} =(x_i^{\mathrm{in}},v_i^{\mathrm{in}}),\quad \sum_{j=1}^\infty m_{j}=1.
		\end{cases}
	\end{align}
	This modification preserves the alignment structure while enabling the uniform stability results in fully noncompact regimes. In the remainder of this paper, we therefore focus on this ICS-type model and develop a uniform-in-time stability theory together with the corresponding mean-field limit:
	\begin{equation} 
		\begin{cases} \label{B-13}
			\displaystyle \partial_tf+ \nabla_x \cdot (v f) + \nabla_v \cdot (L[f]f)=0, \quad t > 0,~~(x, v) \in {\mathbb R}^{2d},  \vspace{6pt}\\
			\displaystyle L[f](t,x,v)=-\kappa\int_{\mathbb{R}^{2d}} \phi( \bw[f](t)) \left(v-v_*\right)  f_*d x_* d v_*,\vspace{6pt} \\
			\displaystyle \bw[f](t)=\int_{\mathbb{R}^{4d}}|x-x_*|_pff_*d x d vd x_* d v_*,\vspace{6pt} \\
			\displaystyle f \Big|_{t = 0}  =f^{\mathrm{in}}.
		\end{cases}
	\end{equation}
	Before we proceed to the next subsection, we recall a system of dissipative differential inequalities that will play a key role in the subsequent analysis.
\begin{lemma}{\emph{\cite{HKZ}}}\label{L2.6}
Let $\mathcal{X}$ and $\mathcal{V}$ be Lipschitz continuous functions satisfying the system of dissipative differential inequalities (SDDI):
	\begin{align*}
		\left|\frac{d\mathcal{X}}{d t}\right|\le \mathcal{V}, \quad \frac{d\mathcal{V}}{d t}\le -\alpha\mathcal{V}+\gamma e^{-\alpha t} \mathcal{X}, \quad \mbox{a.e.}~t > 0,
	\end{align*}
	where $\alpha$ and $\gamma$ are positive constants. We set 
	\[
	L_0:=\max\left\{1,\frac{2\gamma}{\alpha e}\right\}+\frac{8\gamma}{\alpha^3e^3} \quad \mbox{and} \quad L_1:= \frac{2L_0}{\alpha}+L_0. 
	\]
	Then, the following assertions hold:
	\begin{enumerate}
		\item	
		The functionals $\mathcal{X}$ and $\mathcal{V}$ satisfy
		\begin{align*}
			\mathcal{X}(t)\le \frac{2L_0}{\alpha}\left(\mathcal{X}(0)+\mathcal{V}(0)\right), \quad \mathcal{V}(t)\le L _0\left(\mathcal{X}(0)+\mathcal{V}(0) \right)e^{-\alpha t}, \quad \mbox{a.e.}~~t > 0.
		\end{align*}
		\item	
		Uniform-in-time boundedness estimate holds:
		\begin{align*}
			\sup_{0 \leq t < \infty} (\mathcal{X}(t)+\mathcal{V}(t) ) \le L_1 \left(\mathcal{X}(0)+\mathcal{V}(0)\right).
		\end{align*}
	\end{enumerate}	
\end{lemma}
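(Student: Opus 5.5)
The plan is to turn the two differential inequalities into integral inequalities, feed the velocity bound into the position bound, and close with a Gr\"onwall-type argument driven by the decaying factor $e^{-\alpha t}$. Since $\mathcal{X}$ and $\mathcal{V}$ are Lipschitz, they are absolutely continuous, so the a.e.\ inequalities may be integrated. Throughout I take $\mathcal{X},\mathcal{V}\ge 0$, as in every application in the paper (they are norms of differences).

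\emph{Step 1 (velocity in terms of position).} Multiplying the second inequality by $e^{\alpha t}$ gives $\frac{d}{dt}\big(e^{\alpha t}\mathcal{V}\big)\le \gamma\mathcal{X}(t)$ a.e. Integrating,
\[
\mathcal{V}(t)\le e^{-\alpha t}\Big(\mathcal{V}(0)+\gamma\int_0^t\mathcal{X}(s)\,ds\Big).
\]

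\emph{Step 2 (closed inequality for the position).} From $|\dot{\mathcal{X}}|\le\mathcal{V}$ we have $\mathcal{X}(t)\le\mathcal{X}(0)+\int_0^t\mathcal{V}(s)\,ds$. Insert Step 1 and use Fubini, together with $\int_r^t e^{-\alpha s}\,ds\le \alpha^{-1}e^{-\alpha r}$:
\[
\mathcal{X}(t)\le \mathcal{X}(0)+\frac{\mathcal{V}(0)}{\alpha}+\frac{\gamma}{\alpha}\int_0^t e^{-\alpha r}\mathcal{X}(r)\,dr .
\]
Gr\"onwall's lemma, with the integrable kernel $\frac{\gamma}{\alpha}e^{-\alpha r}$, then yields the uniform bound
\[
\mathcal{X}(t)\le \Big(\mathcal{X}(0)+\frac{\mathcal{V}(0)}{\alpha}\Big)e^{\gamma/\alpha^2},
\]
which already gives a time-independent estimate of the form $\mathcal{X}(t)\le C(\mathcal{X}(0)+\mathcal{V}(0))$.

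\emph{Step 3 (exponential decay of $\mathcal{V}$ and assertion (2)).} Plug $\sup_{s}\mathcal{X}(s)\le M$ back into Step 1. This gives $\mathcal{V}(t)\le e^{-\alpha t}\big(\mathcal{V}(0)+\gamma M t\big)$, and the factor $t$ must be absorbed. To get decay at the full rate $\alpha$ with the stated constants, I would refine rather than crudely bound: use $\int_0^t\mathcal{X}\le t\,\mathcal{X}(0)+\int_0^t\int_0^s\mathcal{V}$, together with the elementary bounds $\sup_{t\ge0}te^{-\alpha t/2}=\frac{2}{\alpha e}$ and $\sup_{t\ge0} t^2e^{-\alpha t}$ of order $\frac{1}{\alpha^2e^2}$. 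These produce terms like $\frac{2\gamma}{\alpha e}$ and $\frac{8\gamma}{\alpha^3e^3}$, which are the ingredients of $L_0$. The resulting estimate is $\mathcal{V}(t)\le L_0(\mathcal{X}(0)+\mathcal{V}(0))e^{-\alpha t}$. Integrating this bound in $\mathcal{X}(t)\le\mathcal{X}(0)+\int_0^t\mathcal{V}$ gives
\[
\mathcal{X}(t)\le \mathcal{X}(0)+\frac{L_0}{\alpha}(\mathcal{X}(0)+\mathcal{V}(0)),
\]
which is bounded by $\frac{2L_0}{\alpha}(\mathcal{X}(0)+\mathcal{V}(0))$ once $L_0\ge1$ is combined with the relevant case split on $\alpha$. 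Summing the bounds for $\mathcal{X}$ and $\mathcal{V}$ gives assertion (2) with $L_1=\frac{2L_0}{\alpha}+L_0$.

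The main obstacle is Step 3. Qualitatively, uniform boundedness and exponential decay follow easily from Steps 1--2. Reproducing the precise constant $L_0$ requires a careful bootstrap in which the polynomial factors $t$ and $t^2$ produced by iterating Step 1 are absorbed against $e^{-\alpha t}$ via $\sup_t t^ke^{-\alpha t/2}$. If the exact constants resist, I would settle for the Gr\"onwall constant $e^{\gamma/\alpha^2}$ from Step 2. That constant is still independent of $t$ and suffices for all later uses of the lemma.
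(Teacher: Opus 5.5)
\textbf{There is a genuine gap in Step 3, and it cannot be closed, because the estimate you aim for is false.}

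\emph{The rate $e^{-\alpha t}$ is unattainable.} Take $X_0>0$, $V_0\ge0$, $\mathcal{X}(t)\equiv X_0$ and $\mathcal{V}(t)=e^{-\alpha t}(V_0+\gamma X_0 t)$. Both functions are Lipschitz on $[0,\infty)$ and $\mathcal{V}\ge0$. Also $|\dot{\mathcal{X}}|=0\le\mathcal{V}$, and $\dot{\mathcal{V}}=-\alpha\mathcal{V}+\gamma e^{-\alpha t}\mathcal{X}$ holds with equality. Yet $e^{\alpha t}\mathcal{V}(t)=V_0+\gamma X_0t\to\infty$, so no bound $\mathcal{V}(t)\le Ce^{-\alpha t}$ with $C$ independent of $t$ exists.

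Your own tools point the same way. Using $\sup_{t\ge0}te^{-\alpha t/2}=\frac{2}{\alpha e}$ absorbs the factor $t$ only by giving up half the rate. So the bootstrap you sketch yields at best $e^{-\alpha t/2}$, which is exactly what the term $\frac{2\gamma}{\alpha e}$ in $L_0$ is built for, not $e^{-\alpha t}$.

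\emph{The closing step also fails.} The inequality $\mathcal{X}(0)\le\frac{L_0}{\alpha}(\mathcal{X}(0)+\mathcal{V}(0))$ is false when $\alpha>L_0$, and no case split on $\alpha$ helps. The pair $\mathcal{V}\equiv0$, $\mathcal{X}\equiv X_0$ satisfies the SDDI but violates $\mathcal{X}(t)\le\frac{2L_0}{\alpha}(\mathcal{X}(0)+\mathcal{V}(0))$ whenever $\alpha>2L_0$, for instance with $\gamma$ small and $\alpha=3$.

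\emph{Assertion (2) with the stated $L_1$ fails too.} In the first example with $V_0=0$ one gets $\sup_t(\mathcal{X}+\mathcal{V})=X_0\bigl(1+\frac{\gamma}{\alpha e}\bigr)$. For $\alpha=100$, $\gamma=50$ this is about $1.18\,X_0$, while $L_1=(1+\frac{2}{\alpha})L_0\approx1.02$. The paper gives no proof of this lemma; it only quotes it. As transcribed, its quantitative conclusions are not provable, so no choice of bootstrap can reproduce $L_0$, $L_1$ and the rate $\alpha$.

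\textbf{What is salvageable.} What can be proved, and what Step B of Section~\ref{sec:3.2} actually uses, is the qualitative statement, and your Steps 1--2 already deliver it. They are correct provided $\mathcal{X}\ge0$, which your Fubini step needs and which holds in every application; $\mathcal{V}\ge0$ follows automatically from $|\dot{\mathcal{X}}|\le\mathcal{V}$ and continuity. Set $M:=\sup_t\mathcal{X}(t)$, so that $M\le e^{\gamma/\alpha^2}\bigl(\mathcal{X}(0)+\mathcal{V}(0)/\alpha\bigr)$ by Step 2. Then Step 1 gives
\[
\mathcal{V}(t)\le e^{-\alpha t}\mathcal{V}(0)+\gamma M\,te^{-\alpha t}\le\Bigl(\mathcal{V}(0)+\frac{2\gamma M}{\alpha e}\Bigr)e^{-\alpha t/2}.
\]
Hence $\sup_t(\mathcal{X}+\mathcal{V})\le C(\alpha,\gamma)\bigl(\mathcal{X}(0)+\mathcal{V}(0)\bigr)$ with an explicit $t$-independent constant, and $\mathcal{V}$ decays at rate $\alpha/2$ (or any rate below $\alpha$).

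So do not present the $e^{\gamma/\alpha^2}$ constant as a fallback for constants that ``resist''. State explicitly that the lemma must be corrected: decay $e^{-\alpha t/2}$, a position bound keeping an $\mathcal{X}(0)$ term not divided by $\alpha$, and an enlarged $L_1$. Then prove that version. It is all that Theorems~\ref{T2.1}--\ref{T2.3} require.
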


	\subsection{Description of main results}\label{sec:2.4}
	In this subsection, we provide our main results on the uniform stability of the ICS-type model \eqref{B-12} and the KCS-type model \eqref{B-13}. As a byproduct, we also provide the uniform mean-field limit from ICS-type model to KCS-type model with a fully noncompact support. As an application of Lemma \ref{L2.6}, we have the first main result on uniform stability estimate.
	\begin{theorem} \label{T2.1}
	For $p\in [2,\infty)$, we assume that two sets of initial data and system parameters satisfy
		\begin{align*}
		\begin{aligned}
			& (X^{\mathrm{in}},V^{\mathrm{in}}),~~(\bar{X}^{\mathrm{in}},\bar{V}^{\mathrm{in}})\in \ell^{p,p}\times \ell^{p,p}, \quad \sum_{i=1}^\infty m_i v_i^{\mathrm{in}}=\sum_{i=1}^\infty m_i \bar{v}_i^{\mathrm{in}}=0,\\
			& \kappa>\frac{4}{\tilde{c}_p}\max \left\{\frac{\|V^{\mathrm{in}}\|_{p}}{\int_{2\|X^{\mathrm{in}}\|_{p}}^\infty \phi(s)ds},\frac{\|\bar{V}^{\mathrm{in}}\|_{p}}{\int_{2\|\bar{X}^{\mathrm{in}}\|_{p}}^\infty\phi(s)ds}\right\}, \quad \tilde{c}_p=\frac{2^{2-p}}{p},
		\end{aligned}
		\end{align*} 
and let $(X(t), V(t))$, $(\bar{X}(t), \bar{V}(t))$ be global solutions to \eqref{B-12}. Then, there exists a constant $C>0$ independent of time $t$ such that
		\begin{align*}
		\sup_{0 \leq t < \infty} \Big( \|X(t)-\bar{X}(t)\|_{p}+\|V(t)-\bar{V}(t)\|_{p} \Big) \le C \Big (\|X^{\mathrm{in}}-\bar{X}^{\mathrm{in}}\|_{p}+\|V^{\mathrm{in}}-\bar{V}^{\mathrm{in}}\|_{p} \Big ).
		\end{align*}
	\end{theorem}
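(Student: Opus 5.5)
The plan is to reduce the statement to the SDDI of Lemma \ref{L2.6} with $\mathcal{X}(t)=\|X(t)-\bar{X}(t)\|_p$ and $\mathcal{V}(t)=\|V(t)-\bar{V}(t)\|_p$.

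\textbf{Step 1: structure of the flow.} Since $\sum_j m_j=1$ and the momentum vanishes (conservation as in Lemma \ref{L2.1}(1), which applies verbatim to \eqref{B-12}), the velocity equation collapses to
\[ \dot v_i=-\kappa\,\phi(\bw(X(t)))\,v_i. \]
Writing $\Phi(t):=\kappa\phi(\bw(X(t)))$, we get $v_i(t)=e^{-\int_0^t\Phi}v_i^{\mathrm{in}}$. Hence $\|V(t)\|_p=e^{-\int_0^t\Phi}\|V^{\mathrm{in}}\|_p$.

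\textbf{Step 2: flocking.} For the spatial moment we use $\bw(X)\le 2\|X\|_{1,p}\le 2\|X\|_p$. Set $\mathcal{L}(t):=\|V(t)\|_p+\kappa\int_{2\|X^{\mathrm{in}}\|_p}^{2\|X(t)\|_p}\phi(s)\,ds$. Then
\[ \frac{d}{dt}\mathcal{L}\le -\kappa\phi(\bw)\|V\|_p+2\kappa\phi(2\|X\|_p)\|V\|_p\le -\kappa\phi(2\|X\|_p)\|V\|_p+2\kappa\phi(2\|X\|_p)\|V\|_p. \]
This crude bound has the wrong sign, so the functional must be adjusted. We use $\dot{\|X\|_p}\le\|V\|_p$ and $\phi(\bw)\ge\phi(2\|X\|_p)$, and consider instead $\|V\|_p+\kappa\int_{2\|X^{\mathrm{in}}\|_p}^{2\|X\|_p}\tfrac12\phi$. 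The factor $4/\tilde c_p$ in the hypothesis suggests the authors work with $\|V\|_p^p$ and the convexity constant of Lemma \ref{L2.3}. Either way we obtain $\sup_t\|X(t)\|_p\le x^\infty$ with $\int_{2\|X^{\mathrm{in}}\|_p}^{2x^\infty}\phi\lesssim\|V^{\mathrm{in}}\|_p/\kappa<\int_{2\|X^{\mathrm{in}}\|_p}^\infty\phi$, so $x^\infty<\infty$ under the assumed bound on $\kappa$. Consequently $\Phi(t)\ge\alpha:=\kappa\phi(2x^\infty)>0$ for both solutions (take the worse $x^\infty$), and $\|V(t)\|_p,\|\bar V(t)\|_p\le \|V^{\mathrm{in}}\|_p e^{-\alpha t}$.

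\textbf{Step 3: SDDI.} Clearly $|\dot{\mathcal{X}}|\le\mathcal{V}$. For the velocity difference,
\[ \frac{d}{dt}(v_i-\bar v_i)=-\Phi(v_i-\bar v_i)-(\Phi-\bar\Phi)\bar v_i. \]
Taking $\|\cdot\|_p$ (via the derivative of $\mathcal{V}^p$ and Hölder) gives $\dot{\mathcal{V}}\le-\alpha\mathcal{V}+|\Phi-\bar\Phi|\,\|\bar V\|_p$. Since $\phi$ is Lipschitz with constant $\|\phi'\|_\infty\le\beta$, and $|\bw(X)-\bw(\bar X)|\le 2\|X-\bar X\|_{1,p}\le2\mathcal{X}$ by the triangle inequality and Jensen, we get $|\Phi-\bar\Phi|\le 2\kappa\beta\mathcal{X}$. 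Combined with Step 2,
\[ \dot{\mathcal{V}}\le-\alpha\mathcal{V}+2\kappa\beta\|\bar V^{\mathrm{in}}\|_p e^{-\alpha t}\mathcal{X}. \]
This is exactly the SDDI with $\gamma=2\kappa\beta\|\bar V^{\mathrm{in}}\|_p$. Lemma \ref{L2.6}(2) then yields the claim with $C=L_1$, which is independent of $t$. Lipschitz continuity of $\mathcal{X}$ and $\mathcal{V}$ follows from boundedness of the velocities in $\ell^{p,p}$. Points where $\mathcal{V}=0$ are handled by a standard $\sqrt{\mathcal{V}^2+\epsilon}$ or a.e. argument.

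\textbf{Main obstacle.} The hard part is Step 2: producing a uniform-in-time bound on $\|X(t)\|_p$ with exactly the stated threshold (constant $4/\tilde c_p$, lower limit $2\|X^{\mathrm{in}}\|_p$). The first attempt will be a Lyapunov functional pairing $\frac{d}{dt}\|V\|_p^p=-p\Phi\|V\|_p^p$ with $\frac{d}{dt}\|X\|_p\le\|V\|_p$. If the constants do not close, I would accept a possibly different numerical threshold, or use Lemma \ref{L2.3} on $\sum m_i|x_i|_p^p$ as the authors' constant suggests.
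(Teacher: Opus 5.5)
Your proof is correct, and it takes a different and more direct route than the paper's. You use the fact that the weight $\phi(\bw(X))$ is the same for every pair and $\sum_j m_j=1$. With zero momentum, the velocity equation then collapses to $\dot v_i=-\kappa\phi(\bw(X))v_i$. The paper never uses this.

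The paper instead proceeds as follows:
\begin{enumerate}
\item It proves the SDDI of Lemma \ref{L3.1} for $(\|X\|_{q,p},\|V\|_{q,p})$ using the uniform convexity of $|x|_p^q$ (Lemma \ref{L2.3}, via Clarkson's inequality) together with the zero-mean Jensen trick.
\item It closes this with the functional $\|V\|_{q,p}+\frac{\kappa\tilde c_q}{4}\int_{2\|X^{\mathrm{in}}\|_{q,p}}^{2\|X\|_{q,p}}\phi(s)\,ds$ (Proposition \ref{P3.1}).
\item For the difference, it splits $\frac{d}{dt}\|V-\bar V\|_p^p$ into two terms. The damping term is controlled by componentwise monotonicity of $a\mapsto|a|^{p-2}a$ plus Jensen. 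The cross term is controlled by $\|\phi\|_{\mathrm{Lip}}$ and the decay of $\|\bar V\|_p$.
\item Lemma \ref{L2.6} then finishes exactly as in your Step 3.
\end{enumerate}

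Your reduction gives the exact identity $\|V(t)\|_p=e^{-\int_0^t\Phi}\|V^{\mathrm{in}}\|_p$ and an exact linear equation for $v_i-\bar v_i$. It also gives better constants:
\begin{itemize}
\item damping rate $\kappa\phi(2x^\infty)$ instead of $\kappa 2^{1-p}\phi(2x_p^\infty)$;
\item $\gamma=2\kappa\beta\|\bar V^{\mathrm{in}}\|_p$ instead of $4\kappa\|\phi\|_{\mathrm{Lip}}\|\bar V^{\mathrm{in}}\|_p$;
\item coupling threshold $\kappa>2\|V^{\mathrm{in}}\|_p/\int_{2\|X^{\mathrm{in}}\|_p}^\infty\phi(s)\,ds$ instead of the factor $4/\tilde c_p=p2^p$.
\end{itemize}
What the paper's route buys is robustness. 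Its dissipation argument uses only the symmetry of the pair weight and a lower bound on it. So it is the template that survives for symmetric pair-dependent weights, as in the treatment of the original ICS model in Appendix \ref{App-B}, where your scalar reduction is unavailable.

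A few clean-ups are needed.
\begin{itemize}
\item Your ``main obstacle'' is not an obstacle. With $\mathcal{L}=\|V\|_p+\frac{\kappa}{2}\int_{2\|X^{\mathrm{in}}\|_p}^{2\|X\|_p}\phi(s)\,ds$, you get
\[ \frac{d}{dt}\mathcal{L}\le-\kappa\phi(2\|X\|_p)\|V\|_p+\kappa\phi(2\|X\|_p)\|V\|_p=0 \quad \text{a.e.} \]
Hence $\frac{\kappa}{2}\int_{2\|X^{\mathrm{in}}\|_p}^{2\|X(t)\|_p}\phi\le\|V^{\mathrm{in}}\|_p$.
\item Since $\tilde c_p/4\le 1/8<1/2$, the stated hypothesis already implies your threshold. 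So neither a modified threshold nor Lemma \ref{L2.3} is needed.
\item Drop the first, wrong-sign functional and the vague $\lesssim$ from Step 2.
\item The bound for the second solution should read $\|\bar V(t)\|_p\le e^{-\alpha t}\|\bar V^{\mathrm{in}}\|_p$. Take $\alpha=\kappa\phi(2\max\{x^\infty,\bar x^\infty\})$, so that one $\alpha$ serves as both the damping rate and the exponent required by Lemma \ref{L2.6}.
\end{itemize}
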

	\begin{proof}
	Although the detailed proof can be found in Section \ref{sec:3}, we briefly summarize a proof strategy in three steps:
	\vspace{0.1cm}
		\begin{itemize}
			\item
			Step A:~We use SDDI technique to show that there exist two $x_{q,p}^{\infty}$ and $\bar{x}_{q,p}^{\infty}$ (we assume that $x_{q,p}^{\infty}\geq \bar{x}_{q,p}^{\infty}$) such that the following two assertions hold.
			\begin{enumerate}
				\item (Spatial cohesion): There exists an $x_{q,p}^{\infty}<\infty$ such that 
			\[ \sup_{0 \leq t < \infty} \|X(t) \|_{q,p}<x_{q,p}^{\infty},\quad \sup_{0 \leq t < \infty} \|\bar{X}(t) \|_{q,p}<\bar{x}_{q,p}^{\infty}.\]
			Here, $x_{q,p}^{\infty}$ and $\bar{x}_{q,p}^{\infty}$ are nonnegative constants defined implicitly by the following relations:
			\[\|V^{\mathrm{in}}\|_{q,p}=\frac{\kappa \tilde{c}_q}{4} \int_{2\|X^{\mathrm{in}}\|_{q,p}}^{2x_{q,p}^{\infty}}\phi(s)d s,\quad \|\bar{V}^{\mathrm{in}}\|_{q,p}=\frac{\kappa \tilde{c}_q}{4} \int_{2\|\bar{X}^{\mathrm{in}}\|_{q,p}}^{2\bar{x}_{q,p}^{\infty}}\phi(s)d s.\]
			\item  (Velocity alignment): The velocity moment converges to zero exponentially fast, i.e., 
			\[\|V\|_{q,p}\leq e^{-\frac{\kappa \tilde{c}_q}{2} \phi(2x_{q,p}^{\infty})t}\|V^{\mathrm{in}}\|_{q,p},\quad \|\bar{V}\|_{q,p}\leq e^{-\frac{\kappa \tilde{c}_q}{2} \phi(2\bar{x}_{q,p}^{\infty})t}\|\bar{V}^{\mathrm{in}}\|_{q,p}.\]
			\end{enumerate}
			For further details, we refer to Proposition \ref{P3.1}.
			\vspace{0.1cm}
			\item
			Step B:~We combine weak flocking estimate in Proposition \ref{P3.1}  to see
			\[
			\hspace{1.5cm}
			\begin{cases}
			\displaystyle	\frac{d}{dt}	\|X-\bar{X}\|_{p} \le \|V-\bar{V}\|_{p}, \quad \mbox{a.e.,}~~ t > 0, \\[1em]
                         \displaystyle \frac{d}{dt}\|V-\bar{V}\|_p \le - \kappa 2^{1-p}\phi(2x_p^\infty)\|V-\bar{V}\|_p+4\kappa \|\phi\|_{\mathrm{Lip}}e^{-\frac{\kappa \tilde{c}_p}{2} \phi(2\bar{x}_{p}^{\infty})t}\|\bar{V}^{\mathrm{in}}\|_{p}\|X-\bar{X}\|_{p}. 
                         \end{cases}
                         \]
			For further details, we refer to Section \ref{sec:3.2}. 
			\vspace{0.1cm}
			\item
			Step C:~We combine the above two steps together with Lemma \ref{L2.6} to get  the uniform-in-time stability of ICS-type model.
			\end{itemize}
		 For further details, we refer to Section \ref{sec:3}. 
	\end{proof}
	\begin{remark}\label{NewR2.7}
	Suppose that the initial data satisfy $(X^{\mathrm{in}}, V^{\mathrm{in}}) \in \ell^{p,p} \times \ell^{p,p}$. Then, for the ICS-type model \eqref{B-12}, there exists a unique global solution $(X(t), V(t)) \in C^1([0, \infty); \ell^{p,p} \times \ell^{p,p})$. This follows from the standard Cauchy--Lipschitz theory, since the vector field of \eqref{B-12} has linear growth and, as established in Step B of Theorem \ref{T2.1}, is locally Lipschitz continuous.
	\end{remark}
Next, we provide the following uniform-in-time mean-field limit results.
\begin{theorem} \label{T2.2}
For $p\in[2,\infty)$, we assume that the initial datum $\mu_0\in \cP(\bbr^{2d})$ satisfy
	\begin{equation*}
		\int_{\bbr^{2d}} \Big( |x|_p^p+|v|_p^p \Big)~ \mu_0(dx,dv)<\infty,\quad \int_{\bbr^{2d}}v~ \mu_0(dx,dv)=0,\quad \|V_{\mu_0}\|_{p}< \frac{\kappa \tilde{c}_p}{4} \int_{2\|X_{\mu_0}\|_{p}}^{\infty}\phi(s)d s.
	\end{equation*}
	Then there exists a unique measure-valued solution $\mu_t\in L^\infty([0,\infty);\cP(\bbr^{2d}))$ to \eqref{B-13} with initial data $\mu_0$. Also, $\mu_t$ can be approximated by $\mu_t^n$, the measure derived by the infinite particle system, in Wasserstein distance uniformly in time:
	\[\lim_{n\to\infty} \sup_{0 \leq t < \infty}W_p(\mu_t^n,\mu_t)=0.\]
\end{theorem}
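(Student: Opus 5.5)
The plan is a particle-in-cell argument built on the uniform-in-time stability estimate of Theorem \ref{T2.1}.

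\textbf{Step 1: approximate $\mu_0$ by weighted atoms.} Partition $\bbr^{2d}$ into countably many dyadic cells $\{Q_i^n\}_{i\in\bbn}$ of diameter at most $2^{-n}$. Set $m_i^n:=\mu_0(Q_i^n)$, pick $z_i^{n,\mathrm{in}}=(x_i^{n,\mathrm{in}},v_i^{n,\mathrm{in}})\in Q_i^n$, and let $\mu_0^n:=\sum_i m_i^n\delta_{z_i^{n,\mathrm{in}}}$. Coupling each cell with its atom gives $W_p(\mu_0^n,\mu_0)\le 2^{-n}\cdot C_d\to 0$.

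To meet the hypotheses of Theorem \ref{T2.1}, I replace $v_i^{n,\mathrm{in}}$ by the cell barycenter $\frac{1}{m_i^n}\int_{Q_i^n}v\,d\mu_0$. This choice is legitimate because cells are convex, so the barycenter lies in the cell. It makes the total momentum exactly zero.

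Since $\|X_{\mu_0^n}\|_p\to\|X_{\mu_0}\|_p$ and $\|V_{\mu_0^n}\|_p\to\|V_{\mu_0}\|_p$, and the threshold condition is strict and continuous in these moments, the condition
\[
\|V^{n,\mathrm{in}}\|_p<\frac{\kappa\tilde c_p}{4}\int_{2\|X^{n,\mathrm{in}}\|_p}^\infty\phi(s)\,ds
\]
holds for all large $n$, with uniform margin. The empirical measure $\mu_t^n$ of the ICS-type solution to \eqref{B-12} with weights $(m_i^n)$ is then a measure-valued solution of \eqref{B-13}.

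\textbf{Step 2: uniform stability for atomic data with different weights.} The difficulty is that two atomic data $\mu_0^n$ and $\mu_0^k$ generally live on different weight vectors, while Theorem \ref{T2.1} compares configurations indexed by the same weights. I handle this by common refinement. Take the optimal coupling $\pi$ between $\mu_0^n$ and $\mu_0^k$, which is itself atomic, and relabel its atoms $(z_a,z'_b)$ with weights $\pi_{ab}$.

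Splitting an atom into several atoms with the same state yields the same ICS-type trajectory, since $\bw$ and the alignment force depend only on the empirical measure. Both systems can therefore be written on the common weight vector $(\pi_{ab})$. Theorem \ref{T2.1} then gives
\[
\sup_t W_p(\mu_t^n,\mu_t^k)\le \sup_t\big(\|X-X'\|_p+\|V-V'\|_p\big)\le C\,2^{1-1/p}W_p(\mu_0^n,\mu_0^k).
\]
I need $C$ to be uniform in $n$ and $k$. By Lemma \ref{L2.6}, $C$ depends only on $\alpha=\kappa2^{1-p}\phi(2x_p^\infty)$, on $\gamma\sim\kappa\|\phi\|_{\mathrm{Lip}}\|V^{\mathrm{in}}\|_p$, and on the decay rates. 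These quantities depend only on $\|X^{\mathrm{in}}\|_p$, $\|V^{\mathrm{in}}\|_p$ and the margin in the threshold condition. All three are uniformly controlled by Step 1, so $x_p^\infty$ stays uniformly bounded.

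\textbf{Step 3: uniform Cauchy property and limit.} Step 2 shows that $(\mu_t^n)$ is Cauchy in $W_p$ uniformly in $t$. Let $\mu_t$ be its limit in the complete space $\cP_p(\bbr^{2d})$, and note the moments stay uniformly bounded. I then pass to the limit in the weak formulation. The key point is that $\bw[\mu_t^n]\to\bw[\mu_t]$, because $\bw$ is $2W_1\le 2W_p$-Lipschitz. The first moments $\int v\,d\mu^n_t$ also converge, so the linear-in-$v$ force $L[\mu^n_t]$ converges locally uniformly. Hence $\mu_t$ is a measure-valued solution, and $\sup_t W_p(\mu_t^n,\mu_t)\to0$.

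\textbf{Step 4: uniqueness.} For uniqueness, let $\nu_t$ be another solution with the same initial datum $\mu_0$. I compare both $\mu_t$ and $\nu_t$ to the same particle approximations via the characteristic flow. Given $\nu_t$, the force field $-\kappa\phi(\bw[\nu_t])(v-\int v\,d\nu_t)$ is affine in $v$, so $\nu_t$ is the pushforward of $\mu_0$ under its characteristic flow. A Grönwall estimate on finite intervals, comparing the flows generated by $\nu$ and $\mu$, gives $W_p(\mu_t,\nu_t)=0$.

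\textbf{Main obstacle.} I expect the main difficulty to be Step 2: making Theorem \ref{T2.1} applicable to configurations with differing weights, and ensuring its constant does not depend on $n$.
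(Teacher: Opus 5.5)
Your existence and mean-field argument is the paper's. It uses dyadic particle-in-cell data with cell barycenters, so the momentum vanishes exactly, and the strict threshold carries over to $\mu_0^n$ through convergence of the $p$-moments. Theorem \ref{T2.1} is then applied with a constant depending only on $x_p^\infty$ and $\|V^{\mathrm{in}}\|_p$, hence uniform in $n$. This gives a Cauchy sequence in $W_p$ uniformly in $t$, and the weak formulation passes to the limit via $|\bw[\mu]-\bw[\nu]|\le 2W_p(\mu,\nu)$.

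Two points differ. First, to put $\mu_0^{n_1}$ and $\mu_0^{n_2}$ on a common weight vector, the paper uses the nesting of dyadic cubes: each fine cell is assigned the atom of the coarse cell containing it. This gives initial discrepancies of at most $d^{1/p}2^{-n_2}$ directly. Your splitting along a coupling is the device the paper only uses later, for Theorem \ref{T2.3}. It handles arbitrary non-nested atomic approximations, at the price of passing through $\mu_0$ by the triangle inequality. Both versions rest on the same fact that duplicating atoms does not change the ICS-type trajectory.

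The substantive difference is uniqueness. The paper gives no argument in this proof, and Remark \ref{NewR2.8} attributes uniqueness to Theorem \ref{T2.3}. But the proof of Theorem \ref{T2.3} approximates the given solutions by particle solutions through Theorem \ref{T2.2}, which tacitly identifies them with the constructed limit. Your characteristic-flow argument is direct and avoids this circularity.

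To complete it, fix the uniqueness class, e.g.\ solutions with $\int(|x|_p^p+|v|_p^p)\,d\nu_t$ bounded on bounded time intervals; the constructed $\mu_t$ belongs to it. This makes $L[\nu_t]$ well defined. It also lets you invoke uniqueness for the linear continuity equation with the globally Lipschitz field $\bigl(v,-\kappa\phi(\bw[\nu_t])(v-\int v\,d\nu_t)\bigr)$, so that $\mu_t=(\Phi^\mu_t)_\#\mu_0$ and $\nu_t=(\Phi^\nu_t)_\#\mu_0$. The Gr\"onwall estimate for $D(t):=\|\Phi^\mu_t-\Phi^\nu_t\|_{L^p(\mu_0)}$ then closes for three reasons:
\begin{itemize}
\item $|\bw[\mu_t]-\bw[\nu_t]|\le 2D(t)$;
\item $\bigl|\int v\,d\mu_t-\int v\,d\nu_t\bigr|\le D(t)$;
\item the momentum of $\nu_t$ stays zero along its flow, so $\int|v|_p^p\,d\nu_t\le\int|v|_p^p\,d\mu_0$ bounds the factor multiplying $|\phi(\bw[\mu_t])-\phi(\bw[\nu_t])|$.
\end{itemize}
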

\begin{proof}
We use the uniform-in-time stability results in Theorem \ref{T2.1} and particle-in-cell method to find the desired estimates. For further details, we refer to Section \ref{sec:4.1}. 
	\end{proof}
Finally, we present our last main result on the uniform stability result of the KCS-type model \eqref{A-1}.
\begin{theorem} \label{T2.3}
For $p\in[2,\infty)$, we assume that initial data $\mu_0, \nu_0 \in \cP(\bbr^{2d})$ satisfy a set of conditions:
	\begin{align*}
	\begin{aligned}
		&\int_{\bbr^{2d}} \Big( |x|_p^p+|v|_p^p \Big)~ \mu_0(dx,dv)<\infty,\quad \int_{\bbr^{2d}} \Big( |x|_p^p+|v|_p^p \Big)~ \nu_0(dx,dv)<\infty,\\
		&	\int_{\bbr^{2d}}v~ \mu_0(dx,dv)=\int_{\bbr^{2d}}v~ \nu_0(dx,dv)=0,\quad \kappa>\frac{4}{\tilde{c}_p}\max \left\{\frac{\|V_{\mu_0}\|_{p}}{\int_{2\|X_{\mu_0}\|_{p}}^\infty \phi(s)ds},\frac{\|V_{\nu_0}\|_{p}}{\int_{2\|X_{\nu_0}\|_{p}}^\infty\phi(s)ds}\right\},
	\end{aligned}
\end{align*}
	and let $\mu_t, \nu_t\in L^\infty([0,\infty);\cP(\bbr^{2d}))$ be global solutions to \eqref{B-13} with initial data $\mu_0, \nu_0$, respectively. Then, there exists a constant $C>0$ such that  
	\[ \sup_{0 \leq t < \infty} W_p(\mu_t, \nu_t)\leq C W_{p}(\mu_0, \nu_0).\]
\end{theorem}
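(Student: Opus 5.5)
We plan to deduce Theorem \ref{T2.3} from the particle-level estimate of Theorem \ref{T2.1}, using the mean-field approximation of Theorem \ref{T2.2} and an optimal-coupling discretization. The key observation is that the ICS-type system \eqref{B-12} is posed with an arbitrary weight sequence $(m_j)$ and countably many particles. Hence a pair of discrete measures built on a \emph{common} family of weights can be compared particle by particle, and the particle distance dominates the Wasserstein distance.

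\textbf{Step 1 (common discretization).} Fix an optimal coupling $\pi_0\in\Pi(\mu_0,\nu_0)$ for $W_p$. Partition $\bbr^{2d}\times\bbr^{2d}$ into countably many cells $Q_i$ of diameter at most $1/n$; the cells may be unbounded only at infinity, and we handle the tail by letting a finite-moment cutoff go to infinity. Set $m_i=\pi_0(Q_i)$ and choose representative pairs $(z_i,\bar z_i)\in Q_i$. This gives particle data $(X^{\mathrm{in}},V^{\mathrm{in}})$ and $(\bar X^{\mathrm{in}},\bar V^{\mathrm{in}})$ with the same weights, satisfying
\[
\|X^{\mathrm{in}}-\bar X^{\mathrm{in}}\|_p^p+\|V^{\mathrm{in}}-\bar V^{\mathrm{in}}\|_p^p\le \int|z-\bar z|_p^p\,d\pi_0+o(1)=W_p^p(\mu_0,\nu_0)+o(1).
\]
The empirical measures $\mu^n_0=\sum m_i\delta_{z_i}$ and $\nu_0^n=\sum m_i\delta_{\bar z_i}$ satisfy $W_p(\mu_0^n,\mu_0)\to0$ and $W_p(\nu_0^n,\nu_0)\to0$. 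Before using Theorem \ref{T2.1}, we subtract the small mean velocity introduced by discretization; by Remark \ref{R2.2} this is a Galilean shift, and its effect vanishes as $n\to\infty$. Since the inequality on $\kappa$ in Theorem \ref{T2.3} is strict, and the moments $\|X\|_p$, $\|V\|_p$ of the discrete data converge to those of $\mu_0,\nu_0$, the hypotheses of Theorem \ref{T2.1} hold for $n$ large.

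\textbf{Step 2 (uniform constant).} Theorem \ref{T2.1} gives
\[
\sup_t\big(\|X-\bar X\|_p+\|V-\bar V\|_p\big)\le C_n\big(\|X^{\mathrm{in}}-\bar X^{\mathrm{in}}\|_p+\|V^{\mathrm{in}}-\bar V^{\mathrm{in}}\|_p\big).
\]
We must check that $C_n$ is bounded in $n$. Tracing its origin through Lemma \ref{L2.6}, it depends only on $\alpha=\kappa2^{1-p}\phi(2x^\infty_p)$, on $\gamma\simeq\kappa\|\phi\|_{\mathrm{Lip}}\|\bar V^{\mathrm{in}}\|_p$, and on the decay rate involving $\bar x^\infty_p$. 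These in turn depend continuously on $\|X^{\mathrm{in}}\|_p$, $\|V^{\mathrm{in}}\|_p$ and their barred counterparts, through the implicit relations defining $x^\infty_{p}$. Because those moments converge, $C_n\to C$, a constant determined by the moments of $\mu_0$ and $\nu_0$. One technical point: the SDDI needs $e^{-\alpha t}$ as the forcing rate, whereas the actual rate is $\frac{\kappa\tilde c_p}{2}\phi(2\bar x^\infty_p)$. We handle this by taking $\alpha$ to be the minimum of the two rates, as Section \ref{sec:3} presumably does.

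\textbf{Step 3 (pass to the limit).} The pushforward coupling $\sum m_i\delta_{(z_i(t),\bar z_i(t))}$ gives
\[
W_p(\mu^n_t,\nu^n_t)\le \|X-\bar X\|_p+\|V-\bar V\|_p,
\]
up to a factor of $2^{1-1/p}$ from combining position and velocity terms. Theorem \ref{T2.2}, applied to $\mu_0$ and to $\nu_0$ separately (its hypothesis is implied by ours), gives $\sup_tW_p(\mu^n_t,\mu_t)\to0$ and the same for $\nu$. We must make sure that the discretization used in Theorem \ref{T2.2} can be taken to be ours; otherwise we apply Theorem \ref{T2.1} once more to compare two discretizations of the same measure, using the same constant argument. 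The triangle inequality then yields
\[
\sup_tW_p(\mu_t,\nu_t)\le 2^{1-1/p}C\,W_p(\mu_0,\nu_0)+o(1),
\]
and letting $n\to\infty$ concludes the proof.

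The main obstacle is Step 2: showing that the constant in Theorem \ref{T2.1} is uniform along the discretization and robust to the mean-velocity correction. This requires continuity of the implicitly defined $x^\infty_p$ in the initial moments, which we obtain from the strict inequality on $\kappa$ and the positivity of $\phi$.
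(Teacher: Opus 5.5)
Your argument is correct and follows the same skeleton as the paper: Theorem \ref{T2.1} with an $n$-uniform constant, the approximation of Theorem \ref{T2.2}, and the triangle inequality. The only real difference is how the two particle systems are put on a common weight sequence. You discretize the continuous optimal plan $\pi_0$. The paper instead starts from the particle-in-cell approximants $\mu_0^n,\nu_0^n$ of Theorem \ref{T2.2}, which carry different weights $m^n_\ell,\tilde m^n_k$. It takes an optimal plan $(m_{\ell,k})$ between these two discrete measures and re-indexes the pairs by a bijection $\mathbb{N}\to\mathbb{N}^2$, with weights $\mathbf{m}_j=m_{\ell_j,k_j}$. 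Duplicating particles does not change the ICS-type dynamics, so the new systems still generate $\mu_t^n,\nu_t^n$. It then uses $W_p(\mu_0^n,\nu_0^n)\to W_p(\mu_0,\nu_0)$.

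The paper's choice removes both extra burdens of your route. The cell averages give exactly zero momentum, so no correction is needed. And the approximants are precisely those for which Theorem \ref{T2.2} gives $\sup_t W_p(\mu_t^n,\mu_t)\to0$, so no second comparison is needed. Your fallback of comparing two discretizations of one measure through Theorem \ref{T2.1} would itself need a common-weight representation of two discrete measures, which is the same re-indexing device.

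Your idea can be made clean as follows. Take cells $A\times B$, with $A,B$ the dyadic cubes of Theorem \ref{T2.2}, and weights $\pi_0(A\times B)$. For representatives, use the cell averages $(x_A,\bar x_B)$ of $\mu_0$ and $\nu_0$ rather than arbitrary points of $Q_i$. Then:
\begin{itemize}
\item the two marginals are exactly the approximants of Theorem \ref{T2.2};
\item both momenta vanish;
\item Minkowski's inequality in $L^p(\pi_0)$ bounds the initial particle discrepancy by $W_p(\mu_0,\nu_0)+2(2d)^{1/p}2^{-n}$.
\end{itemize}
This is the paper's proof with its optimal discrete plan replaced by the discretized $\pi_0$.

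Two cautions. First, the momentum correction must be a shift of the initial velocities (an initial perturbation of size $|v_0^n|_p$). A Galilean shift of solutions will not do: if $\mu_0^n$ has mean velocity $v_0^n\neq0$, its center of mass drifts linearly in $t$ relative to $\mu_t$, and $\sup_t W_p(\mu_t^n,\mu_t)\to0$ fails. Second, no tail cutoff is needed, since countably many bounded cubes already cover $\mathbb{R}^{4d}$ and the ICS-type model allows countably many particles.
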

\begin{proof}
	We combine Theorem \ref{T2.1} and Theorem \ref{T2.2} to find the desired estimates. For further details, we refer to Section \ref{sec:4.2}. 
\end{proof}
\begin{remark}\label{NewR2.8}
	In Theorems \ref{T2.2} and \ref{T2.3}, we have established the global existence and uniqueness of solutions to the KCS-type model \eqref{B-13}. Specifically, the global existence is obtained via a particle-in-cell approximation scheme, while the uniqueness follows from the stability estimates established therein.
\end{remark}
\section{Uniform-in-time stability} \label{sec:3}
\setcounter{equation}{0} 
In this section, we study the uniform stability with respect to initial data for the ICS-type model on the sender network. Recall the Cauchy problem for the ICS-type model in  a metric space $(\bbr^d, |\cdot|_p)$:
\begin{align}
	\begin{cases} \label{C-1}
		\displaystyle \frac{dx_i}{dt} = v_i, \quad t>0,\quad i \in {\mathbb N},\vspace{6pt}\\
		\displaystyle\frac{dv_i}{dt} = \kappa\sum_{j=1}^\infty m_{j}\phi(\bw(X))(v_j-v_i),\vspace{6pt}\\
		\displaystyle\bw(X)=\sum_{i,j=1}^\infty m_im_j|x_j-x_i|_p,	\vspace{6pt}\\
		\displaystyle	(x_i,v_i) \Big|_{t = 0} =(x_i^{\mathrm{in}},v_i^{\mathrm{in}}),\quad \sum_{j=1}^\infty m_{j}=1.
	\end{cases}
\end{align}
Note that the H\"{o}lder inequality and $\sum_{j=1}^\infty m_{j}=1$ imply
\begin{align}\label{C-2}
	\bw(X)\le \Bigg(\sum_{i,j=1}^\infty m_im_j|x_j-x_i|_p^q\Bigg)^{\frac{1}{q}}\le 2\Bigg(\sum_{i=1}^\infty m_i|x_i|_p^q\Bigg)^{\frac{1}{q}}=2\|X\|_{q,p}
	\end{align}
for any $q>1$. Also, one can easily check that the total weighted momentum is conserved along \eqref{C-1}:
\begin{equation} \label{C-2-0}
	\frac{d}{dt}\Bigg(\sum_{i=1}^\infty m_i v_i \Bigg)= \kappa\sum_{i,j=1}^{\infty}m_i m_j\phi(\bw (X))(v_j-v_i)=0.
\end{equation}
Furthermore, as in the original ICS model \eqref{B-2} discussed in Remark \ref{R2.2}, we may assume without loss of generality that
\begin{equation*}
	\sum_{i=1}^\infty m_iv_i^{\mathrm{in}}=0.
\end{equation*}
\subsection{Preparatory estimates}\label{sec:3.1}
In this subsection, we provide several a priori estimates for the ICS-type model. First, we derive the following SDDI.
\begin{lemma}\label{L3.1}
		Suppose $2\leq p \leq q $. Let $(X(t), V(t))$ be a global solution to \eqref{C-1} with initial data $(X^{\mathrm{in}},V^{\mathrm{in}})\in \ell^{q,p}\times \ell^{q,p}$ satisfying the zero momentum condition:
\[ \displaystyle \sum_{i=1}^\infty m_i v_i^{\mathrm{in}}= 0.  \]
Then, we have the following estimates:
		\begin{align}\label{SDDI}
			\begin{cases}
				\displaystyle \Big| \frac{d}{dt}\|X\|_{q,p} \Big| \leq \|V\|_{q,p}, \quad \mbox{a.e.,}~t > 0, \vspace{6pt}\\
								\displaystyle\frac{d}{dt}\|V\|_{q,p}\leq -\frac{\kappa \tilde{c}_q}{2} \phi(2\|X\|_{q,p})\|V\|_{q,p}.
			\end{cases}
		\end{align}
		Here, $\tilde{c}_q=\frac{2^{2-q}}{q}$.
\end{lemma}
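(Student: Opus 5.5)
The first inequality is immediate. We differentiate $\|X\|_{q,p}^q=\sum_i m_i|x_i|_p^q$ and use $\dot x_i=v_i$:
\[ \frac{d}{dt}\|X\|_{q,p}^q = \sum_i m_i \nabla G(x_i)\cdot v_i, \qquad G(x)=|x|_p^q. \]
Since $|\nabla G(x)|_{p'} = q|x|_p^{q-1}$ with $1/p+1/p'=1$, H\"older's inequality in $\bbr^d$ and then in the weighted sum (exponents $q/(q-1)$ and $q$) give $\big|\frac{d}{dt}\|X\|_{q,p}^q\big|\le q\|X\|_{q,p}^{q-1}\|V\|_{q,p}$. This yields $|\frac{d}{dt}\|X\|_{q,p}|\le\|V\|_{q,p}$ wherever $\|X\|_{q,p}>0$. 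The a.e. qualifier handles points where the norm vanishes: $\|X\|_{q,p}$ is Lipschitz in time, so we argue via $\frac{d}{dt}(\|X\|_{q,p}^q+\varepsilon)^{1/q}$ and let $\varepsilon\to0$. Termwise differentiation of the infinite sum is justified because $(X,V)\in C^1([0,\infty);\ell^{q,p}\times\ell^{q,p})$ (Remark \ref{NewR2.7}, with $q$ in place of $p$ as in Remark \ref{R2.1}).

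For the velocity, note that $\phi(\bw(X))$ does not depend on $i,j$. Together with $\sum_j m_j=1$ and momentum conservation \eqref{C-2-0} (which keeps $\sum_j m_jv_j=0$), this gives
\[ \dot v_i=\kappa\phi(\bw(X))\sum_j m_j(v_j-v_i)=-\kappa\phi(\bw(X))\,v_i. \]
Then
\[ \frac{d}{dt}\|V\|_{q,p}^q=\sum_i m_i\nabla G(v_i)\cdot\dot v_i = -\kappa\phi(\bw)\sum_i m_i\nabla G(v_i)\cdot v_i. \]
To bring in the constant $\tilde c_q$, I would symmetrize rather than use Euler's identity directly. Write $\sum_i m_i\nabla G(v_i)\cdot v_i=\sum_i m_i \nabla G(v_i)\cdot\sum_j m_j(v_i-v_j)$, then exchange $i,j$ and average:
\[ \sum_i m_i\nabla G(v_i)\cdot v_i = \frac12\sum_{i,j}m_im_j(\nabla G(v_i)-\nabla G(v_j))\cdot(v_i-v_j)\ge \frac{c_q}{2}\sum_{i,j}m_im_j|v_i-v_j|_p^q \]
by Lemma \ref{L2.3} (which requires $2\le p\le q$). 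Next, Jensen's inequality with $\sum_jm_jv_j=0$ gives $|v_i|_p^q=|\sum_j m_j(v_i-v_j)|_p^q\le\sum_j m_j|v_i-v_j|_p^q$, so the double sum is at least $\|V\|_{q,p}^q$. Hence $\frac{d}{dt}\|V\|_{q,p}^q\le-\frac{\kappa c_q}{2}\phi(\bw)\|V\|_{q,p}^q$. Dividing by $q\|V\|_{q,p}^{q-1}$ (again regularizing at zero) gives the rate $\frac{\kappa c_q}{2q}=\frac{\kappa\tilde c_q}{2}$, exactly matching $\tilde c_q=2^{2-q}/q$.

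Finally, $\phi$ is decreasing and \eqref{C-2} gives $\bw(X)\le2\|X\|_{q,p}$, so $\phi(\bw(X))\ge\phi(2\|X\|_{q,p})$, which yields the second inequality of \eqref{SDDI}. (Euler's identity alone would give the better rate $\kappa\phi$, but the symmetrized route is what reproduces the stated constant, so I would follow it.) The only genuinely delicate point is rigor in infinite dimensions: interchanging $d/dt$ with the series, and the a.e. differentiability of the norms where they vanish. Both are handled by the $C^1$ regularity in $\ell^{q,p}$ and the $\varepsilon$-regularization.
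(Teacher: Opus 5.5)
Your proposal is correct and follows the paper's proof essentially step for step. The H\"older argument for $\|X\|_{q,p}$ is the paper's \eqref{A.3}. For $\|V\|_{q,p}$, the paper likewise symmetrizes to $-\frac{\kappa}{2}\sum_{i,j}m_im_j\phi(\bw(X))(\nabla G(v_i)-\nabla G(v_j))\cdot(v_i-v_j)$, then applies Lemma \ref{L2.3}, the zero-momentum Jensen bound \eqref{C-3}, and $\phi(\bw(X))\ge\phi(2\|X\|_{q,p})$ from \eqref{C-2}.

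Your side observation that $\dot v_i=-\kappa\phi(\bw(X))v_i$ is also right. In fact, the Euler-identity rate $\kappa\phi$ already implies the stated inequality a fortiori, since $\tilde c_q/2<1$, so you did not need the symmetrized route to reach the stated constant.
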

\begin{proof}
\noindent (i)~The first inequality is the same as \eqref{A.3} in the proof of  Lemma \ref{L2.1}, so we omit it here. \newline

\noindent (ii)~It follows from \eqref{C-2-0} that 
\begin{equation} \label{C-2-1}
\sum_{i=1}^\infty m_iv_i(t) = 0, \quad t \geq 0.
\end{equation}
Next, we claim that 
	\begin{equation} \label{C-3}
	\sum_{i,j=1}^\infty m_i m_j |v_i-v_j|_p^q
	\ge
	\sum_{i=1}^\infty m_i |v_i|_p^q.
	\end{equation}
{\it Proof of \eqref{C-3}}: Note that for each $i$, we use \eqref{C-2-1} to find 
	\[
	v_i = v_i-\sum_{j=1}^\infty m_j v_j
	= \sum_{j=1}^\infty m_j (v_i-v_j).
	\]
Since the map $z\mapsto |z|_p^q$ is convex for $q\ge1$,  we use the above relation and Jensen's inequality to get
	\[
	|v_i|_p^q
	=
	\left|\sum_{j=1}^\infty m_j (v_i-v_j)\right|_p^q
	\le
	\sum_{j=1}^\infty m_j |v_i-v_j|_p^q.
	\]
	We multiply the above relation by $m_i$ and sum up the resulting relation over $i$ to find the desired inequality \eqref{C-3}.  On the other hand, recall that 
\[  G(x):=|x|_p^q, \quad c_q = 2^{2-q}>0. \]
By Lemma \ref{L2.3}, one has 
\begin{equation} \label{C-3-1}
(\nabla G(a)-\nabla G(b))\cdot (a-b)\geq c_q |a-b|_p^q\quad \forall~a,b\in\bbr^d.
\end{equation}
We use  \eqref{C-2}  and \eqref{C-3-1} to get
\begin{align} \label{C-4}
		\begin{aligned}
			\frac{d}{dt}\sum_{i=1}^\infty m_i|v_i|_p^q &= -\frac{\kappa}{2} \sum_{i,j=1}^\infty m_im_j\phi(\bw(X))(\nabla G(v_i)-\nabla G(v_j))\cdot(v_i-v_j)\\
			&\leq -\frac{\kappa c_{q}}{2}\sum_{i,j=1}^\infty m_im_j\phi(\bw(X))|v_i-v_j|_p^q\\
			&\leq -\frac{\kappa c_{q}}{2}\phi(2\|X\|_{q,p})\sum_{i,j=1}^\infty m_im_j|v_i-v_j|_p^q\\
			&\leq-\frac{\kappa c_{q}}{2}\phi(2\|X\|_{q,p})\sum_{i=1}^\infty m_i|v_i|_p^q.
		\end{aligned}
	\end{align}
We divide both sides of \eqref{C-4} by $q\|V\|_{q,p}^{q-1}$ to obtain $\eqref{SDDI}_2$.
\end{proof}

Now, we are ready to provide the weak flocking dynamics of the ICS-type model \eqref{C-1}.
\begin{proposition}\label{P3.1}
For $2\leq p\leq q$, we assume that initial data and system parameters satisfy
\begin{align*}
(X^{\mathrm{in}},V^{\mathrm{in}})\in \ell^{q,p}\times \ell^{q,p}, \quad \sum_{i=1}^\infty m_i v_i^{\mathrm{in}}=0,\quad \|V^{\mathrm{in}}\|_{q,p}< \frac{\kappa \tilde{c}_q}{4} \int_{2\|X^{\mathrm{in}}\|_{q,p}}^{\infty}\phi(s)d s,
\end{align*} 
and let $(X(t), V(t))$ be a global solution to \eqref{C-1}. Then, the following assertions hold. 
\vspace{0.1cm}
\begin{enumerate}
	\item (Spatial cohesion): There exists a positive constant $x_{q,p}^{\infty}<\infty$ such that 
	\[\|X\|_{q,p}<x_{q,p}^{\infty}.\]
	Here, $x_{q,p}^{\infty}$ is the largest positive number such that 
	\[\|V^{\mathrm{in}}\|_{q,p}=\frac{\kappa \tilde{c}_q}{4} \int_{2\|X^{\mathrm{in}}\|_{q,p}}^{2x_{q,p}^{\infty}}\phi(s)d s.\]
	\item  (Velocity alignment): The velocity moment converges to zero exponentially fast:
	\[\|V\|_{q,p}\leq e^{-\frac{\kappa \tilde{c}_q}{2} \phi(2x_{q,p}^{\infty})t}\|V^{\mathrm{in}}\|_{q,p}, \quad t \geq 0.\]
\end{enumerate}
\end{proposition}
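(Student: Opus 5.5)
The plan is a Lyapunov-functional argument built on the SDDI \eqref{SDDI} of Lemma \ref{L3.1}, in the spirit of the classical Ha--Liu energy method. Write $\mathcal{X}(t):=\|X(t)\|_{q,p}$ and $\mathcal{V}(t):=\|V(t)\|_{q,p}$. I introduce
\[
\mathcal{L}_\pm(t):=\mathcal{V}(t)\pm\frac{\kappa\tilde{c}_q}{4}\int_{2\|X^{\mathrm{in}}\|_{q,p}}^{2\mathcal{X}(t)}\phi(s)\,ds .
\]
The factor $\frac{\kappa \tilde c_q}{4}$ together with the upper limit $2\mathcal{X}$ is chosen so that, by the chain rule, the derivative of the integral term is
\[
\frac{\kappa\tilde c_q}{4}\cdot 2\phi(2\mathcal{X})\,\frac{d\mathcal{X}}{dt}=\frac{\kappa\tilde c_q}{2}\phi(2\mathcal{X})\frac{d\mathcal{X}}{dt}.
\]
Combining this with $|\dot{\mathcal{X}}|\le \mathcal{V}$ and $\dot{\mathcal{V}}\le -\frac{\kappa\tilde c_q}{2}\phi(2\mathcal{X})\mathcal{V}$ gives $\frac{d}{dt}\mathcal{L}_\pm\le 0$ for a.e. $t$. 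Integrating from $0$ to $t$, and using $\mathcal{V}\ge0$, yields
\[
\frac{\kappa\tilde c_q}{4}\left|\int_{2\|X^{\mathrm{in}}\|_{q,p}}^{2\mathcal{X}(t)}\phi(s)\,ds\right|\le \mathcal{V}(0)-\mathcal{V}(t)\le\|V^{\mathrm{in}}\|_{q,p}.
\]

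For the spatial cohesion, the hypothesis
\[
\|V^{\mathrm{in}}\|_{q,p}<\frac{\kappa\tilde c_q}{4}\int_{2\|X^{\mathrm{in}}\|_{q,p}}^{\infty}\phi(s)\,ds
\]
lets me define $x^\infty_{q,p}$ by the stated equality: the map $y\mapsto \frac{\kappa\tilde c_q}{4}\int_{2\|X^{\mathrm{in}}\|}^{2y}\phi$ is continuous and strictly increasing (since $\phi>0$), goes from $0$ to a value exceeding $\|V^{\mathrm{in}}\|_{q,p}$, so such a point exists, is unique, and is finite. The bound above gives
\[
\frac{\kappa\tilde c_q}{4}\int_{2\|X^{\mathrm{in}}\|}^{2\mathcal{X}(t)}\phi\le \|V^{\mathrm{in}}\|_{q,p}-\mathcal{V}(t)\le\|V^{\mathrm{in}}\|_{q,p},
\]
and hence $\mathcal{X}(t)\le x^\infty_{q,p}$ by monotonicity.

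The main obstacle is upgrading this to the strict inequality claimed in the statement. Strictness holds automatically if $\mathcal{V}(t)>0$. If instead $\mathcal{V}(t_0)=0$ at some time, then the SDDI forces $\mathcal{V}\equiv0$ afterwards, so $\mathcal{X}$ freezes at its value at $t_0$. Equality $\mathcal{X}=x^\infty_{q,p}$ would then require the $\mathcal{L}_-$ dissipation to be an exact identity on $[0,t_0]$, i.e. $\dot{\mathcal X}=\mathcal V$ and equality in the SDDI throughout. If $\|V^{\mathrm{in}}\|_{q,p}>0$, the velocity decays exponentially (see the next paragraph), so it cannot reach zero in finite time; this leaves $\mathcal V\equiv 0$, in which case $\mathcal{X}\equiv\|X^{\mathrm{in}}\|_{q,p}<x^\infty$ unless $x^\infty=\|X^{\mathrm{in}}\|$. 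That degenerate case (and the convention "largest positive number") I would handle by simply reading the bound as $\le$, or by noting that the statement is meaningful for nontrivial data.

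For the velocity alignment, since $\phi$ is nonincreasing and $2\mathcal{X}(t)\le 2x^\infty_{q,p}$, we have $\phi(2\mathcal{X}(t))\ge\phi(2x^\infty_{q,p})$. Substituting this into $\eqref{SDDI}_2$ gives
\[
\dot{\mathcal V}\le-\frac{\kappa\tilde c_q}{2}\phi(2x^\infty_{q,p})\mathcal V,
\]
and Gr\"onwall's lemma yields the exponential decay. One technical point remains: $\mathcal V$ may fail to be differentiable where it vanishes, since Lemma \ref{L3.1} divides by $\mathcal V^{q-1}$. I would circumvent this by working with $\mathcal V^q$, whose differential inequality \eqref{C-4} is valid everywhere, and then taking the $q$-th root at the end.
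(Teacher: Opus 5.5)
Your main line is the paper's own proof (Appendix C). The paper uses exactly the functionals $\mathcal{E}_\pm=\|V\|_{q,p}\pm\frac{\kappa\tilde c_q}{4}\int_{2\|X^{\mathrm{in}}\|_{q,p}}^{2\|X\|_{q,p}}\phi(s)\,ds$ and gets $\mathcal{E}_\pm\le\|V^{\mathrm{in}}\|_{q,p}$ from Lemma \ref{L3.1}. It then reads off the bound on $\|X\|_{q,p}$ and inserts $\phi(2\|X\|_{q,p})\ge\phi(2x^\infty_{q,p})$ into $\eqref{SDDI}_2$.

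\textbf{The flawed step.} In your strictness discussion you argue that the velocity ``decays exponentially (see the next paragraph), so it cannot reach zero in finite time.'' This is a non sequitur. The next paragraph gives only an \emph{upper} bound $\mathcal{V}(t)\le e^{-ct}\mathcal{V}(0)$, which is fully compatible with $\mathcal{V}$ hitting zero. The SDDI by itself gives no lower bound on $\mathcal{V}$ at all.

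\textbf{The fix.} Go back to the equation. Momentum conservation \eqref{C-2-0} gives $\sum_j m_jv_j\equiv 0$, so $\eqref{C-1}_2$ reduces to $\dot v_i=-\kappa\phi(\bw(X))v_i$. Hence $V(t)=e^{-\kappa\int_0^t\phi(\bw(X(s)))ds}\,V^{\mathrm{in}}$, and since $\phi\le 1$, $\|V(t)\|_{q,p}\ge e^{-\kappa t}\|V^{\mathrm{in}}\|_{q,p}>0$ whenever $V^{\mathrm{in}}\neq 0$. With this, your first observation ($\mathcal{V}(t)>0$ forces strictness) finishes the argument directly. Consequently:
\begin{enumerate}
\item The detour through equality cases becomes unnecessary. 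Incidentally, it is $\mathcal{L}_+$, not $\mathcal{L}_-$, whose monotonicity bounds $\mathcal{X}$ from above; your ``i.e.\ $\dot{\mathcal X}=\mathcal V$'' matches $\mathcal{L}_+$.
\item Your differentiability worry also disappears, because $\mathcal{V}$ is either identically zero or strictly positive and smooth.
\end{enumerate}

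\textbf{The degenerate case.} Your remark here is correct and worth keeping. If $V^{\mathrm{in}}=0$, the defining relation forces $x^\infty_{q,p}=\|X^{\mathrm{in}}\|_{q,p}$, while $X(t)\equiv X^{\mathrm{in}}$. So the strict inequality in (1) is false in that case. The paper's proof passes from a non-strict bound to $<$ without comment. What is actually established is $\le$ in general, and $<$ for $V^{\mathrm{in}}\ne 0$ via the lower bound above. The velocity-alignment estimate only needs $\le$.
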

\begin{proof}
Since the proof is based on the standard approach with the system of differential dissipative inequalities (SDDI) in \cite{k2}, we leave it in Appendix \ref{App-C}.
\end{proof}
\subsection{Proof of Theorem \ref{T2.1}}\label{sec:3.2}
In this subsection, we derive the uniform-in-time stability of ICS-type model in two steps. \newline

\noindent $\bullet$~Step A (Derivation of SDDI):~Let $p\in [2,\infty)$, and we assume that two sets of initial data and system parameters satisfy
\begin{align*}
\begin{aligned}
& (X^{\mathrm{in}},V^{\mathrm{in}}),~~(\bar{X}^{\mathrm{in}},\bar{V}^{\mathrm{in}})\in \ell^{p,p}\times \ell^{p,p}, \quad \sum_{i=1}^\infty m_i v_i^{\mathrm{in}}=\sum_{i=1}^\infty m_i \bar{v}_i^{\mathrm{in}}=0, \\
& \kappa>\frac{4}{\tilde{c}_p}\max \left\{\frac{\|V^{\mathrm{in}}\|_{p}}{\int_{2\|X^{\mathrm{in}}\|_{p}}^\infty \phi(s)ds},\frac{\|\bar{V}^{\mathrm{in}}\|_{p}}{\int_{2\|\bar{X}^{\mathrm{in}}\|_{p}}^\infty\phi(s)ds}\right\},
\end{aligned}
\end{align*} 
and let $(X(t), V(t))$, $(\bar{X}(t), \bar{V}(t))$ be global solutions to \eqref{C-1}. Then, we claim that the following estimates hold.
	\begin{equation}\label{C-5}
\begin{cases}
\displaystyle \Big| \frac{d}{dt}\|X-\bar{X}\|_{p} \Big| \leq \|V-\bar{V}\|_{p}, \quad \mbox{a.e.,}~~t > 0, \vspace{8pt}\\
\displaystyle \frac{d}{dt}\|V-\bar{V}\|_p\leq - \kappa 2^{1-p}\phi(2x_p^\infty)\|V-\bar{V}\|_p \vspace{6pt}\\
\displaystyle \hspace{2.5cm} +~ 4\kappa \|\phi\|_{\mathrm{Lip}}e^{-\frac{\kappa \tilde{c}_p}{2} \phi(2\bar{x}_{p}^{\infty})t}\|\bar{V}^{\mathrm{in}}\|_{p}\|X-\bar{X}\|_{p}.
\end{cases}
	\end{equation}
We further split the proof of the above estimates into two cases.\newline

\noindent $\star$ Case A (Derivation of the first inequality in \eqref{C-5}): We first claim that for $q\in [p,\infty)$, we have
\begin{equation} \label{C-6}
\Big| \frac{d}{dt}\|X-\bar{X}\|_{q,p} \Big| \leq \|V-\bar{V}\|_{q,p}.
\end{equation}
{\it Proof of \eqref{C-6}}: Since
\begin{align*}
	\frac{d}{dt}(x_i^k-\bar{x}_i^k) = v_i^k-\bar{v}_i^k,
\end{align*}
we have
\begin{align*}
\Big| \frac{d}{dt}|x_i^k-\bar{x}_i^k| \Big| = \Big| (v_i^k-\bar{v}_i^k) \cdot\mathrm{sgn}(x_i^k-\bar{x}_i^k) \Big| \leq |v_i^k-\bar{v}_i^k|.
\end{align*} 
Then, we use H\"older's inequality to derive
\begin{align*}
\begin{aligned}
& \Big| p|x_i-\bar{x}_i|_p^{p-1}\frac{d}{dt}|x_i-\bar{x}_i|_p \Big| = \Big| \frac{d}{dt}|x_i-\bar{x}_i|_p^p \Big| \leq p\sum_{k=1}^d |x_i^k-\bar{x}_i^k|^{p-1}|v_i^k-\bar{v}_i^k| \\
& \hspace{1cm} \leq p\left(\sum_{k=1}^d	 |x_i^k-\bar{x}_i^k|^p\right)^{\frac{p-1}{p}}\left(\sum_{k=1}^d |v_i^k-\bar{v}_i^k|^p\right)^{\frac{1}{p}} =p|x_i-\bar{x}_i|_p^{p-1}|v_i-\bar{v}_i|_p.
\end{aligned}
\end{align*}
This leads to
\[ \Big| \frac{d}{dt}|x_i-\bar{x}_i|_p \Big| \leq |v_i-\bar{v}_i|_p. \]
Again, we use H\"older's inequality, which gives
\begin{align*}
\begin{aligned}
&  \Big| q\|X-\bar{X}\|_{q,p}^{q-1}\frac{d}{dt}\|X-\bar{X}\|_{q,p}  \Big| = \Big| \frac{d}{dt}\|X-\bar{X}\|_{q,p}^q \Big|\\
& \hspace{1cm}  = \Big|q\sum_{i=1}^\infty m_i|x_i-\bar{x}_i|_p^{q-1}\frac{d}{dt}|x_i-\bar{x}_i|_p \Big| \leq q\sum_i m_i|x_i-\bar{x}_i|_p^{q-1}|v_i-\bar{v}_i|_p \\
& \hspace{1cm} \leq q\left(\sum_{i=1}^\infty m_i |x_i-\bar{x}_i|_q^q\right)^{\frac{q-1}{q}}\left(\sum_i m_i|v_i-\bar{v}_i|_p^q\right)^{\frac{1}{q}}\\
& \hspace{1cm} =q\|X-\bar{X}\|^{q-1}_{q,p}\|V-\bar{V}\|_{q,p}.
\end{aligned}
\end{align*}
Therefore, we have
\[ \Big| \frac{d}{dt}\|X-\bar{X}\|_{q,p} \Big| \leq \|V-\bar{V}\|_{q,p}.\]
In particular, for $q = p$, we have the first inequality in \eqref{C-5}. \newline

\noindent $\star$ Case B  (Derivation of the second inequality in \eqref{C-5}):~We consider $\frac{d}{dt}\|V-\bar{V}\|_p$. From \eqref{C-1}, one can directly get
	\begin{align} \label{C-7}
	\begin{aligned}
		\frac{d}{dt}(v_i^k-\bar{v}_i^k) &= \kappa\sum_{j=1}^\infty m_{j}\phi(\bw(X))\Big((v_j^k-v_i^k)-(\bar{v}_j^k-\bar{v}_i^k)\Big)\\
		&\quad+\kappa \sum_{j=1}^\infty m_j \Big(\phi(\bw(X))-\phi(\bw(\bar{X}))\Big)(\bar{v}_j^k-\bar{v}_i^k).
		\end{aligned}
	\end{align}
	We multiply by $2(v_i^k-\bar{v}_i^k)$ on both sides of \eqref{C-7} to get
	\begin{align*}
		\frac{d}{dt}|v_i^k-\bar{v}_i^k|^2 &= 2\kappa\sum_{j=1}^\infty m_{j}\phi(\bw(X)\Big((v_j^k-\bar{v}_j^k)-(v_i^k-\bar{v}_i^k)\Big)(v_i^k-\bar{v}_i^k)\\
		&\quad+2\kappa \sum_{j=1}^\infty m_{j} \Big(\phi(\bw(X))-\phi(\bw(\bar{X}))\Big)(\bar{v}_j^k-\bar{v}_i^k)(v_i^k-\bar{v}_i^k).
	\end{align*}
Then we use 
\[\frac{d}{dt}|v_i^k-\bar{v}_i^k|^p = \frac{d}{dt}(|v_i^k-\bar{v}_i^k|^2)^{\frac{p}{2}}= \frac{p}{2}|v_i^k-\bar{v}_i^k|^{p-2}\frac{d}{dt}|v_i^k-\bar{v}_i^k|^2\]
to derive
\begin{align*}
	&\frac{d}{dt}\sum_{i=1}^\infty\sum_{k=1}^d m_i |v_i^k-\bar{v}_i^k|^p=\frac{d}{dt}\|V-\bar{V}\|_p^p \\
	& \hspace{0.5cm} =p\kappa\sum_{i,j=1}^\infty\sum_{k=1}^d m_im_j\phi(\bw(X))|v_i^k-\bar{v}_i^k|^{p-2}\Big((v_j^k-\bar{v}_j^k)-(v_i^k-\bar{v}_i^k)\Big)(v_i^k-\bar{v}_i^k)\\
	& \hspace{0.7cm} +p\kappa\sum_{i,j=1}^\infty\sum_{k=1}^d m_im_j \Big(\phi(\bw(X))-\phi(\bw(\bar{X}))\Big)|v_i^k-\bar{v}_i^k|^{p-2} (\bar{v}_j^k-\bar{v}_i^k)(v_i^k-\bar{v}_i^k)\\
	& \hspace{0.5cm} =\frac{p\kappa}{2}\sum_{i,j=1}^\infty\sum_{k=1}^d m_im_j\phi(\bw(X))\Big(|v_i^k-\bar{v}_i^k|^{p-2}(v_i^k-\bar{v}_i^k)-|v_j^k-\bar{v}_j^k|^{p-2}(v_j^k-\bar{v}_j^k)\Big)\\
	&\hspace{0.7cm}\times\Big((v_j^k-\bar{v}_j^k)-(v_i^k-\bar{v}_i^k)\Big)\\
	&\hspace{0.7cm} +p\kappa\sum_{i,j=1}^\infty\sum_{k=1}^d m_im_j \Big(\phi(\bw(X))-\phi(\bw(\bar{X}))\Big)|v_i^k-\bar{v}_i^k|^{p-2}(\bar{v}_j^k-\bar{v}_i^k)(v_i^k-\bar{v}_i^k)\\
	& \hspace{0.5cm} =:\mathcal{I}_{11} + \mathcal{I}_{12}.
\end{align*}	
Below, we estimate the term ${\mathcal I}_{1i},~i = 1,2$ one by one. \newline

\noindent$\clubsuit$ (Estimate of $\mathcal{I}_{11}$): We use the relation
	\begin{align*}
		\Big(|v_i^k-\bar{v}_i^k|^{p-2}(v_i^k-\bar{v}_i^k)-|v_j^k-\bar{v}_j^k|^{p-2}(v_j^k-\bar{v}_j^k)\Big)\Big((v_j^k-\bar{v}_j^k)-(v_i^k-\bar{v}_i^k)\Big)\leq 0
	\end{align*}
	and
	\[  \bw(X)\le 2\|X\|_{p}\le 2x_p^{\infty} \]
	to find 
\begin{align*}
\begin{aligned}
		\mathcal{I}_{11} & \le\frac{p\kappa}{2}\phi(2x_p^{\infty})\sum_{i,j=1}^\infty\sum_{k=1}^d m_im_j\Big(|v_i^k-\bar{v}_i^k|^{p-2}(v_i^k-\bar{v}_i^k)-|v_j^k-\bar{v}_j^k|^{p-2}(v_j^k-\bar{v}_j^k)\Big)\\
	&\quad 	\times\Big((v_j^k-\bar{v}_j^k)-(v_i^k-\bar{v}_i^k)\Big).
	\end{aligned}
	\end{align*}
We set 
\[
a_i^k:=v_i^k-\bar v_i^k,
\]
and use Lemma \ref{L2.3} to find 
\[
\Big(|a_i^k|^{p-2}a_i^k-|a_j^k|^{p-2}a_j^k\Big)(a_i^k-a_j^k)
\ge 2^{2-p}|a_i^k-a_j^k|^p,
\qquad p\geq2.
\]
Hence, we have
\begin{align*}
	\mathcal I_{11}
	&\le
	-\frac{p\kappa}{2}\phi(2x_p^\infty)
	\sum_{i,j=1}^\infty\sum_{k=1}^d m_im_j
	\Big(|a_i^k|^{p-2}a_i^k-|a_j^k|^{p-2}a_j^k\Big)(a_i^k-a_j^k) \\
	&\le
	- p\kappa 2^{1-p}\phi(2x_p^\infty)
	\sum_{i,j=1}^\infty\sum_{k=1}^d m_im_j |a_i^k-a_j^k|^p.
\end{align*}
Moreover, we use 
\[ \sum_{j=1}^\infty m_j=1 \quad \mbox{and} \quad \sum_{j=1}^\infty m_j a_j^k=0 \]
to obtain
\[
|a_i^k|^p
=
\left|\sum_{j=1}^\infty m_j(a_i^k-a_j^k)\right|^p
\le
\sum_{j=1}^\infty m_j|a_i^k-a_j^k|^p.
\]
This yields
\[
\sum_{i=1}^\infty\sum_{k=1}^d m_i|a_i^k|^p
\le
\sum_{i,j=1}^\infty\sum_{k=1}^d m_im_j|a_i^k-a_j^k|^p.
\]
Therefore, we have
\[
\mathcal I_{11}
\le
- p\kappa 2^{1-p}\phi(2x_p^\infty)
\sum_{i=1}^\infty\sum_{k=1}^d m_i |v_i^k-\bar v_i^k|^p=- p\kappa 2^{1-p}\phi(2x_p^\infty)\|V-\bar{V}\|_p^p.
\]
\vspace{.3cm}
	
\noindent	$\clubsuit$ (Estimate of $\mathcal{I}_{12}$): We use 
\[
|\bw(X)-\bw(\bar{X})|\leq \sum_{i,j=1}^\infty m_im_j \left(|x_j-\bar{x}_j|_p+|x_i-\bar{x}_i|_p\right)
\]
and H\"older's inequality to get
	\begin{align} \label{C-8}
		\begin{aligned}
			|\cI_{12}|&\leq p\kappa\|\phi\|_{\text{Lip}}\sum_{i,j=1}^\infty m_i m_j \Bigg(\sum_{i,j=1}^\infty m_i m_j (|x_j - \bar{x}_j|_p+|x_i-\bar{x}_i|_p)\Bigg)\sum_{k=1}^d|v_i^k-\bar{v}_i^k|^{p-1}|\bar{v}_j^k-\bar{v}_i^k| \\
			&\leq p\kappa \|\phi\|_{\text{Lip}}\sum_{i,j=1}^\infty m_im_j\Bigg(\sum_{j=1}^\infty m_j |x_j - \bar{x}_j|_p+\sum_{i=1}^\infty m_i|x_i-\bar{x}_i|_p\Bigg)|v_i-\bar{v}_i|^{p-1}_p|\bar{v}_j-\bar{v}_i|_p \\
			&\leq 2p\kappa \|\phi\|_{\text{Lip}}\|X-\bar{X}\|_{1,p}\sum_{i,j=1}^\infty m_im_j|v_i-\bar{v}_i|^{p-1}_p(|\bar{v}_j|_p+|\bar{v}_i|_p) \\
			&:=2p\kappa \|\phi\|_{\mathrm{Lip}}\|X-\bar{X}\|_{1,p}(\cI_{121}+\cI_{122}).
		\end{aligned}
	\end{align}
We now estimate $\cI_{121}$ and $\cI_{122}$. \newline
	
\noindent	$\diamond$ (Estimate of $\mathcal{I}_{121}$): One can use Jensen's inequality to get
	\begin{align} \label{C-9}
		\begin{aligned}
			\cI_{121}&:=\sum_{i,j=1}^\infty  m_i m_j |v_i-\bar{v}_i|_p^{p-1}|\bar{v}_j|_p\\
			&=\Bigg(\sum_{i=1}^\infty m_i |v_i-\bar{v}_i|_p^{p-1}\Bigg)\Bigg(\sum_{j=1}^\infty m_j|\bar{v}_j|_p\Bigg)\\
			&=\|V-\bar{V}\|_{p-1,p}^{p-1}\|\bar{V}\|_{1,p}\\
			&\leq \|V-\bar{V}\|_p^{p-1}\|\bar{V}\|_{p}.
		\end{aligned}
	\end{align}
	\noindent	$\diamond$ (Estimate of $\mathcal{I}_{122}$): Again, we use H\"older's inequality to obtain 
	\begin{align} \label{C-10}
		\begin{aligned}
			\cI_{122}&:=\sum_{i,j=1}^\infty m_i m_j |v_i-\bar{v}_i|_p^{p-1}|\bar{v}_i|_p\\
			&= \sum_{i=1}^\infty m_i |v_i-\bar{v}_i|_p^{p-1}|\bar{v}_i|_p\\
			&\leq \|V-\bar{V}\|_p^{p-1}\|\bar{V}\|_p.
		\end{aligned}
	\end{align}
	We collect \eqref{C-8}, \eqref{C-9},\eqref{C-10} to find 
	\begin{align*} \label{C-13}
		\begin{aligned}
			|\cI_{12} |&\leq 4p\kappa \|\phi\|_{\mathrm{Lip}}\|V-\bar{V}\|_p^{p-1}\|\bar{V}\|_p\|X-\bar{X}\|_{1,p}\\
			&\leq 4p\kappa\|\phi\|_{\mathrm{Lip}}\|V-\bar{V}\|_p^{p-1}\|\bar{V}\|_p\|X-\bar{X}\|_p,
		\end{aligned}
	\end{align*}
and combine estimates for $\cI_{11}$ and $\cI_{12}$ to see 
	\begin{align*}\frac{d}{dt}\|V-\bar{V}\|_p^p&\le - p\kappa 2^{1-p}\phi(2x_p^\infty)\|V-\bar{V}\|_p^p+4p\kappa \|\phi\|_{\mathrm{Lip}}\|V-\bar{V}\|_p^{p-1}\|\bar{V}\|_p\|X-\bar{X}\|_{p}.
	\end{align*}
	This leads to 
	\begin{equation*}
		\frac{d}{dt}\|V-\bar{V}\|_p\leq - \kappa 2^{1-p}\phi(2x_p^\infty)\|V-\bar{V}\|_p+4\kappa \|\phi\|_{\mathrm{Lip}}\|\bar{V}\|_p\|X-\bar{X}\|_{p}.
	\end{equation*}
	Then Proposition \ref{P3.1} yields
\begin{align*}
	\frac{d}{dt}\|V-\bar{V}\|_p&\le - \kappa 2^{1-p}\phi(2x_p^\infty)\|V-\bar{V}\|_p+4\kappa \|\phi\|_{\mathrm{Lip}}e^{-\frac{\kappa \tilde{c}_p}{2} \phi(2\bar{x}_{p}^{\infty})t}\|\bar{V}^{\mathrm{in}}\|_{p}\|X-\bar{X}\|_{p}.
	\end{align*}

\vspace{0.2cm}

\noindent $\bullet$~Step B:~Finally, we set
\[\alpha=\kappa \min\left\{2^{1-p}\phi(2x_p^{\infty}),~ \frac{\tilde{c}_p}{2}\phi(2\bar{x}_p^{\infty})\right\},\quad\gamma=4\kappa \|\phi\|_{\mathrm{Lip}}\|\bar{V}^{\mathrm{in}}\|_{p} \]
	in Lemma \ref{L2.6}	to show the desired estimate:
\begin{align*}\label{C-14}
	\begin{aligned}
		\|X-\bar{X}\|_{p} +	\|V(t)-\bar{V}(t)\|_{p}\le C(\|X^{\mathrm{in}}-\bar{X}^{\mathrm{in}}\|_{p}+\|V^{\mathrm{in}}-\bar{V}^{\mathrm{in}}\|_{p}).
	\end{aligned}
\end{align*}

\section{Uniform-in-time  mean-field limit} \label{sec:4}
\setcounter{equation}{0} 
In this section, we study the uniform-in-time stability of the KCS-type model by lifting the corresponding result from the ICS-type model in Section \ref{sec:3}. For this, we first establish the uniform-in-time mean-field limit from ICS-type model to KCS-type model through particle-in-cell method, and then we derive the uniform-in-time stability for the KCS-type model by lifting the corresponding particle result. 
\subsection{Proof of Theorem \ref{T2.2}}\label{sec:4.1}
Recall the KCS-type model:
\begin{equation} 
	\begin{cases} \label{D-1}
		\displaystyle \partial_tf+ \nabla_x \cdot (v f) + \nabla_v \cdot (L[f]f)=0, \quad t > 0,~~(x, v) \in {\mathbb R}^{2d},  \vspace{6pt}\\
		\displaystyle L[f](t,x,v)=-\kappa\int_{\mathbb{R}^{2d}} \phi( \bw[f](t)) \left(v-v_*\right)  f_*d x_* d v_*,\vspace{6pt} \\
		\displaystyle \bw[f](t)=\int_{\mathbb{R}^{4d}}|x-x_*|_pf f_*d x d vd x_* d v_*,\vspace{6pt} \\
		\displaystyle f \Big|_{t = 0}  =f^{\mathrm{in}}.
	\end{cases}
\end{equation}
Let $\cP(\bbr^{2d})$ be the set of all probability measures on $\bbr^{2d}$. First, we recall the definition of  a measure-valued solution to \eqref{D-1}. 
\begin{definition} \label{D4.1}
	For $T\in[0,\infty)$, $\mu_t\in L^\infty ([0,T);\cP(\bbr^{2d}))$ is a measure-valued solution to \eqref{D-1} with initial data $\mu_0$ if the following conditions hold:
	\begin{enumerate}
		\item The map $t~\mapsto~\mu_t$ is weakly continuous in the sense that 
			\[\int_{\bbr^{2d}} \varphi(x,v) \mu(t,dx,dv)\quad \text{is continuous in }t,\quad \forall~\varphi \in \cC_c^1(\bbr^{2d}). \]
		\item $\mu$ satisfies the equation \eqref{D-1} in a weak sense: for any $\varphi\in \cC_c^1([0,T)\times \bbr^{2d})$, 
		\begin{align*}
		\begin{aligned}
		& \int_{\bbr^{2d}}\varphi (t,x,v)\mu_t(dx,dv) \\
		& \hspace{0.5cm} = \int_{\bbr^{2d}}\varphi (0,x,v)\mu_0(dx,dv) + \int_0^t \int_{\bbr^{2d}}(\partial_s\varphi +v\cdot \nabla_x \varphi +L[\mu_s]\cdot \nabla_v\varphi)\mu_s(dx,dv)~	ds.
		\end{aligned}
		\end{align*}
	\end{enumerate}
\end{definition}
\begin{remark}\label{R4.1}
	Note that for a solution $(X(t), V(t))$ to \eqref{C-1} with initial data $(X^\mathrm{in},V^{\mathrm{in}})$, the measure 
	\[\mu_t:=\sum_{i=1}^\infty m_i \delta_{x_i(t)}\otimes \delta_{v_i(t)}\]
	is a measure-valued solution of \eqref{D-1} with initial datum:
	\[\mu_0:=\sum_{i=1}^\infty m_i\delta_{x_i^{\mathrm{in}}}\otimes \delta_{v_i^\mathrm{in}},\]
	in the sense of Definition \ref{D4.1}.
\end{remark}
\vspace{0.2cm}

Recall the notation
\begin{align*}
	\|X_{\mu}\|_{q,p}^q:=	\int_{\bbr^{2d}}|x|_p^q~ \mu(dx,dv), \quad 	\|V_{\mu}\|_{q,p}^q:=	\int_{\bbr^{2d}}|v|_p^q~ \mu(dx,dv).
\end{align*}
We show that the solution of \eqref{D-1} can be approximated by the countable sum of Dirac measures using the infinite particle system. Before moving further, we summarize our proof strategy below.
\begin{itemize}
	\item Step A: We use particle-in-cell method to construct approximation sequence $\mu_0^n$ such that for any $p,q\in[1,\infty]$,
	\[W_{q,p}(\mu_0, \mu_0^n)\to 0 \quad \text{as}\quad n\to\infty.\]
	\item Step B: Next, we estimate the distance between approximation measures.
	\item Step C: Finally, we show that for any $t>0$, $\{\mu_t^n\}_{n=1}^\infty$ is a Cauchy sequence, which yields the uniform-in-time mean-field limit results.
\end{itemize}
Now, we proceed with our proof step by step below.\newline

	\noindent $\bullet$ \textbf{Step A (Construction of an approximation sequence):} For given $n\in\bbn$ and lattice points $\alpha,\beta\in\bbz^d$, we set the cube region $\Omega^n_{\alpha \beta}$ and its measure $m_{\alpha\beta}^n$ as 
\begin{equation*}
	\Omega^n_{\alpha\beta}:=\prod_{k=1}^{d}\left[\frac{\alpha_k}{2^n}, \frac{\alpha_k+1}{2^n}\right]\times \prod_{k=1}^{d}\left[\frac{\beta_k}{2^n}, \frac{\beta_k+1}{2^n}\right],\qquad m_{\alpha\beta}^n:=\mu_{0}(\Omega^n_{\alpha\beta}).
\end{equation*}
	If $m_{\alpha\beta}^n>0$, we define the cell average $(x^n_{\alpha\beta}, v^n_{\alpha\beta})\in\Omega^n_{\alpha\beta}$ as 
\begin{equation} \label{D-1-1}
	x_{\alpha\beta}^n := \frac{1}{m_{\alpha\beta}^n}\int_{\Omega^n_{\alpha\beta}}x\mu_0(dx,dv),\qquad v_{\alpha\beta}^n = \frac{1}{m_{\alpha\beta}^n}\int_{\Omega^n_{\alpha\beta}}v\mu_0(dx,dv).
\end{equation}
If $m_{\alpha\beta}^n=0$, we arbitrarily choose $(x^n_{\alpha\beta}, v^n_{\alpha\beta})\in\Omega^n_{\alpha\beta}$.\newline

Next, we define the approximate measure $\mu_0^n$ of $\mu_0$ by
\[\mu_0^n:=\sum_{\alpha,\beta\in\bbz^d}m_{\alpha\beta}^n\delta_{x_{\alpha\beta}^n}\otimes \delta_{v_{\alpha\beta}^n}.\]
Now, we show that for any $p,q\in[1,\infty]$,
\[W_{q,p}(\mu_0, \mu_0^n)\to 0 \quad \text{as}\quad n\to\infty.\]
For each cube $\Omega_{\alpha\beta}^n$, define the piecewise constant map $c_n:\bbr^{2d}\to\bbr^{2d}$ as 
\begin{align*}
	c_n(x,v):=(x_{\alpha\beta}^n, v_{\alpha\beta}^n)\quad \text{for}\quad (x,v)\in\Omega_{\alpha\beta}^n.
\end{align*}
We set
\begin{align*}
	\pi_n:=(\mathrm{Id},c_n)_\#\mu_0\in \Pi (\mu_0, \mu_0^n).
\end{align*}
In fact, the first marginal of $\pi_n$ is clearly $\mu_0$, and the second marginal of $\pi_n$ is $\mu_0^n$: for any measurable set $A\in\bbr^{2d}$, we have
\begin{equation*}
	(c_n)_\# \mu_0(A) = \sum_{\alpha,\beta\in\bbz^d} \mu_0(\Omega^n_{\alpha\beta}\cap c_n^{-1}(A)) = \sum_{\alpha, \beta\in\bbz^d} \mu_0(\Omega^n_{\alpha\beta})\mathbf{1}_A(x_{\alpha\beta}^n,v_{\alpha\beta}^n)=\mu_0^n(A).
\end{equation*}
Note that for any $(x,v)\in\Omega_{\alpha\beta}^n$, we have
\begin{align*}
	|(x,v)-(x_{\alpha\beta}^n,v_{\alpha\beta}^n)|_p\leq \left(\sum_{k=1}^{2d}\left(\frac{1}{2^{n}}\right)^p\right)^{1/p}=\frac{(2d)^{1/p}}{2^{n}}.
\end{align*}
Then for $p, q\in[1,\infty)$, we have
\begin{align*}
	W_{q,p}^q(\mu_0,\mu_0^n)&\leq \int_{\bbr^{2d}\times\bbr^{2d}}|z-z^*|_p^q~\pi_n(dz,dz^*)\\
	&=\sum_{\alpha,\beta\in\bbz^d}\int_{\bbr^{2d}\times \bbr^{2d}}|z-z^*|_p^q~(\mathrm{Id}, c_n)_\#(\mu_0|_{\Omega_{\alpha\beta}^n}) (dz,dz^*)\\
	&=\sum_{\alpha,\beta\in\bbz^d}\int_{\Omega_{\alpha\beta}^n}|(x,v)-(x_{\alpha\beta}^n,v_{\alpha\beta}^n)|_p^q~\mu_0 (dx,dv)\\
	&\leq \sum_{\alpha,\beta\in\bbz^d}\int_{\Omega_{\alpha\beta}^n}\frac{(2d)^{q/p}}{2^{nq}}~\mu_0(dx,dv)=\frac{(2d)^{q/p}}{2^{nq}}\to 0\quad \text{as} \quad n\to \infty.
\end{align*}
For $q=\infty$, we have
\[
W_{\infty,p}(\mu_0,\mu_0^n) \leq \sup_{(z,z^*)\in \operatorname{supp}\pi_n}|z-z^*|_p \leq \frac{(2d)^{1/p}}{2^{n}}\to 0\quad \text{as}\quad n\to \infty.
\]
The case for $p=\infty$ can be treated in a similar manner.

\vspace{.3cm}

	\noindent $\bullet$ \textbf{Step B (Estimation of approximation sequence):} In this step, we show the distance of different measures in our approximation sequences. First, for fixed $n\in\bbn$, we use rearrangement to express $\mu_0^n$:
\[\mu_0^n:=\sum_{\alpha,\beta\in\bbz^d}m_{\alpha\beta}^n\delta_{x_{\alpha\beta}^n}\otimes \delta_{v_{\alpha\beta}^n},\quad\text{where}\quad m_{\alpha\beta}^n:=\mu_{0}(\Omega^n_{\alpha\beta}),\]
in the form of
\begin{equation*}
	\mu_0^n=\sum_{i=1}^\infty m_i\delta_{x_i^{\mathrm{in}}}\otimes \delta_{v_i^{\mathrm{in}}},
\end{equation*}
where $(X^{\mathrm{in}}, V^{\mathrm{in}})\in \ell^{p,p}\times \ell^{p,p}$. We proceed as follows: we choose a bijection between $\bbz^d\times \bbz^d$ and $\bbn$ as $(\alpha,\beta)\leftrightarrow i$ and set
\begin{equation} \label{D-2}
m_i:= m_{\alpha\beta}^{n},\quad (x_{i}^{\mathrm{in}},v_{i}^{\mathrm{in}}):= (x_{\alpha\beta}^{n},v_{\alpha\beta}^{n}).
\end{equation}
Then we have
\begin{equation} \label{D-3}
	\mu_0^n=\sum_{i=1}^\infty m_i\delta_{x_i^{\mathrm{in}}}\otimes \delta_{v_i^{\mathrm{in}}},
\end{equation}
with $X^{\mathrm{in}}\in\ell^{p,p}$ since
\begin{equation} \label{D-3-1}
	|x_{\alpha\beta}^{n}|_p\leq |x|_p+\frac{d^{\frac{1}{p}}}{2^{n}}\quad \text{for} \quad x\in \Omega_{\alpha\beta}^{n},
\end{equation}
and
\begin{align*}
\sum_{i=1}^\infty m_i |x_i^{\mathrm{in}}|_p^p&= \sum_{\alpha, \beta\in\bbz^d}\int_{\Omega_{\alpha\beta}^{n}} |x^{n}_{\alpha\beta}|_p^p~\mu_0(dx,dv)\\
&\leq 2^{p-1}\sum_{\alpha, \beta\in\bbz^d} \int_{\Omega_{\alpha\beta}^{n}}\left(|x|_p^p+\frac{d}{2^{np}}\right)\mu_0(dx,dv)\\
&=2^{p-1}\bigg[\int_{\bbr^{2d}}|x|_p^p~\mu_0(dx,dv)+\frac{d}{2^{np}}\bigg]<\infty.
\end{align*}
Note that since \eqref{D-3-1} holds, by dominated convergence theorem, we have
\begin{equation*}
	\|X^{\mathrm{in}}\|_p=\|X_{\mu_0^n}\|_p = \bigg(\sum_{\alpha,\beta\in\bbz^d}\int_{\Omega^n_{\alpha\beta}}|x_{\alpha\beta}^n|^p \mu_0(dx,dv) \bigg)^{1/p}\to \|X_{\mu_0}\|_p\quad \text{as }n\to\infty.
\end{equation*}
Similarly, 
\[ V^{\mathrm{in}}\in \ell^{p,p} \quad \mbox{and} \quad \|V^{\mathrm{in}}\|_p=\|V_{\mu_0^n}\|_p\to \|V_{\mu_0}\|_p \quad \mbox{as $n\to\infty$}. \] 
 Furthermore, the relations \eqref{D-1-1} give
\begin{equation} \label{D-3-2}
\sum_{i=1}^\infty m_iv_i^{\mathrm{in}}=\sum_{\alpha,\beta\in\bbz^d} m^n_{\alpha\beta}v^n_{\alpha\beta} = \sum_{\alpha, \beta\in\bbz^d} \int_{\Omega^n_{\alpha\beta}}v\mu_0(dx,dv)= \int_{\bbr^{2d}} v\mu_0(dx,dv)=0.
\end{equation}
Now, we fix $n_1>n_2$ and construct
\[m_{\alpha\beta}^{n_1}=\mu_{0}(\Omega^{n_1}_{\alpha\beta}),\quad\alpha,\beta\in\bbz^d,\quad \mu_0^{n_1}:=\sum_{\alpha,\beta\in\bbz^d}m_{\alpha\beta}^{n_1}\delta_{x_{\alpha\beta}^{n_1}}\otimes \delta_{v_{\alpha\beta}^{n_1}},\]
\[m_{\tilde{\alpha}\tilde{\beta}}^{n_2}=\mu_{0}(\Omega^{n_2}_{\tilde{\alpha}\tilde{\beta}}),\quad \tilde{\alpha},\tilde{\beta}\in\bbz^d,\quad \mu_0^{n_2}:=\sum_{\tilde{\alpha},\tilde{\beta}\in\bbz^d}m_{\tilde{\alpha}\tilde{\beta}}^{n_2}\delta_{x_{\tilde{\alpha}\tilde{\beta}}^{n_2}}\otimes \delta_{v_{\tilde{\alpha}\tilde{\beta}}^{n_2}}.\]
Since $n_1>n_2$, each large cube $\Omega_{\tilde{\alpha}\tilde{\beta}}^{n_2}$ is a disjoint union of $2^{2d(n_1-n_2)}$ number of $\Omega^{n_1}_{\alpha\beta}$'s: 
\[\Omega_{\tilde{\alpha} \tilde{\beta}}^{n_2} = \bigcup_{(\alpha,\beta)\in S(\tilde{\alpha},\tilde{\beta})} \Omega_{\alpha\beta}^{n_1},\quad \text{where}\quad |S(\tilde{\alpha},\tilde{\beta})| = 2^{2d(n_1-n_2)}, \]
and therefore
\[m_{\tilde{\alpha}\tilde{\beta}}^{n_2} = \sum_{(\alpha,\beta)\in S(\tilde{\alpha},\tilde{\beta})} m_{\alpha\beta}^{n_1}.\]
We use \eqref{D-2} and \eqref{D-3} with $n=n_1$ so that we use a bijection between $\bbz^d\times \bbz^d$ and $\bbn$ with $(\alpha,\beta)\longleftrightarrow i$ to express $\mu_0^{n_1}$ as
\begin{equation} \label{D-4}
	\mu_0^{n_1}=\sum_{i=1}^\infty m_i\delta_{x_i^{\mathrm{in}}}\otimes \delta_{v_i^{\mathrm{in}}}.
\end{equation}
Now, we define
\begin{equation} \label{D-5}
(\tilde{\alpha},\tilde{\beta}) := \left(\bigg\lfloor\frac{\alpha}{2^{n_1-n_2}}\bigg\rfloor, \bigg\lfloor\frac{\beta}{2^{n_1-n_2}}\bigg\rfloor\right),\quad (\tilde{x}_{i}^{\mathrm{in}},\tilde{v}_{i}^{\mathrm{in}}):=(x_{\tilde{\alpha}\tilde{\beta}}^{n_2},v_{\tilde{\alpha}\tilde{\beta}}^{n_2}).
\end{equation}
Note that the same $(\tilde{\alpha},\tilde{\beta})$ and $(\tilde{x}_i^{\mathrm{in}}, \tilde{v}_i^{\mathrm{in}})$ is repeated for $2^{2d(n_1-n_2)}$ number of different $(\alpha,\beta)$'s. Then, we have 
\[ (X^{\mathrm{in}}, V^{\mathrm{in}}),(\tilde{X}^{\mathrm{in}}, \tilde{V}^{\mathrm{in}})\in \ell^{p,p}\times \ell^{p,p}, \]
where 
\[\|X^{\mathrm{in}}\|_p = \|X_{\mu_0^{n_1}}\|_p, \quad\|V^{\mathrm{in}}\|_p = \|V_{\mu_0^{n_1}}\|_p,\quad \|\tilde{X}^{\mathrm{in}}\|_p = \|X_{\mu_0^{n_2}}\|_p, \quad\|\tilde{V}^{\mathrm{in}}\|_p = \|V_{\mu_0^{n_2}}\|_p.\]One should keep in mind that we are using the weights $\{m_i\}_{i=1}^\infty =\{m_{\alpha\beta}^{n_1}\}_{\alpha,\beta\in\bbz^d}$ when discussing the space $\ell^{p,p}$. For instance, $\tilde{X}^{\mathrm{in}}\in \ell^{p,p}$ comes from
\begin{equation*}
\|\tilde{X}^{\mathrm{in}}\|_p^p
=\sum_{i=1}^\infty m_i |\tilde{x}_i^{\mathrm{in}}|_p^p 
= \sum_{\tilde{\alpha},\tilde{\beta}\in\bbz^d}\left(\sum_{(\alpha,\beta)\in S (\tilde{\alpha},\tilde{\beta})} m_{\alpha\beta}^{n_1}\right)|x_{\tilde{\alpha}\tilde{\beta}}^{n_2}|_p^p = \sum_{\tilde{\alpha},\tilde{\beta}\in\bbz^d} m_{\tilde{\alpha}\tilde{\beta}}^{n_2}|x^{n_2}_{\tilde{\alpha}\tilde{\beta}}|_p^p = \|X_{\mu_0^{n_2}}\|^p_p.
\end{equation*}
Then, since
\[\|V_{\mu_0}\|_{p}< \frac{\kappa \tilde{c}_p}{4} \int_{2\|X_{\mu_0}\|_{p}}^{\infty}\phi(s)d s\]
and
\[\|V_{\mu_0^{n}}\|_{p}\to \|V_{\mu_0}\|_{p},\quad \|X_{\mu_0^{n}}\|_{p}\to \|X_{\mu_0}\|_{p} \quad \mbox{as $n \to \infty$}, \]
for sufficiently large $n_1,n_2$, we have 
\begin{equation} \label{D-5-1}
	\|V^{\mathrm{in}}\|_{p}< \frac{\kappa \tilde{c}_p}{4} \int_{2\|X^{\mathrm{in}}\|_{p}}^{\infty}\phi(s)d s,\quad\| \tilde{V}^{\mathrm{in}}\|_{p}< \frac{\kappa \tilde{c}_p}{4} \int_{2\|\tilde{X}^{\mathrm{in}}\|_{p}}^{\infty}\phi(s)d s.
\end{equation}
Moreover, by \eqref{D-3-2}, we have
\begin{equation} \label{D-5-2}
	\sum_{i=1}^\infty m_i v_i^{\mathrm{in}}=0, \quad \sum_{i=1}^\infty m_i \tilde{v}_i^{\mathrm{in}}
	=\sum_{\tilde{\alpha},\tilde{\beta}\in\bbz^d}\left(\sum_{(\alpha,\beta)\in S (\tilde{\alpha},\tilde{\beta})} m_{\alpha\beta}^{n_1}\right)v_{\tilde{\alpha}\tilde{\beta} }^{n_2} = \sum_{\tilde{\alpha}, \tilde{\beta}\in\bbz^d} m_{\tilde{\alpha}\tilde{\beta}}^{n_2}v_{\tilde{\alpha}\tilde{\beta} }^{n_2}=0.
\end{equation}
On the other hand, since
\[
\sum_{i=1}^\infty m_i \delta_{\tilde{x}_{i}^{\mathrm{in}}}\otimes\delta_{\tilde{v}_{i}^{\mathrm{in}}} 
= \sum_{\tilde{\alpha},\tilde{\beta}\in\bbz^d}\left(\sum_{(\alpha,\beta)\in S(\tilde{\alpha},\tilde{\beta})}m_{\alpha\beta}^{n_1}\right)\delta_{x_{\tilde{\alpha}\tilde{\beta}}^{n_2}}\otimes\delta_{v_{\tilde{\alpha}\tilde{\beta}}^{n_2}} = \sum_{\tilde{\alpha},\tilde{\beta}\in\bbz^d}m_{\tilde{\alpha}\tilde{\beta}}^{n_2}\delta_{x_{\tilde{\alpha}\tilde{\beta}}^{n_2}}\otimes\delta_{v_{\tilde{\alpha}\tilde{\beta}}^{n_2}} = \mu_0^{n_2},
\]
$\mu_0^{n_2}$ can also be expressed as
\begin{equation} \label{D-6}
\mu_0^{n_2} = \sum_{i=1}^\infty m_i \delta_{\tilde{x}_{i}^{\mathrm{in}}}\otimes \delta_{\tilde{v}_{i}^{\mathrm{in}}}.
\end{equation}
Moreover, the relation \eqref{D-5} implies that for each $i\in\bbn$, $(x_{i}^{\mathrm{in}},v_{i}^{\mathrm{in}})$ and $(\tilde{x}_{i}^{\mathrm{in}},\tilde{v}_{i}^{\mathrm{in}})$ are inside the same cube with length $1/2^{n_2}$:
\begin{equation*} 
(x_{i}^{\mathrm{in}},v_{i}^{\mathrm{in}}) \in \Omega_{\tilde{\alpha}\tilde{\beta}}^{n_2}, \quad (\tilde{x}_{i}^{\mathrm{in}},\tilde{v}_{i}^{\mathrm{in}}) \in \Omega_{\tilde{\alpha}\tilde{\beta}}^{n_2},	
\end{equation*}
where $(\tilde{\alpha}, \tilde{\beta})$ is defined by \eqref{D-5} with $(\alpha, \beta)\leftrightarrow i$. This gives
\begin{equation} \label{D-7}
	\|X^{\mathrm{in}}-\tilde{X}^{\mathrm{in}}\|_{p}\leq \left(\sum_{k=1}^d\left(\frac{1}{2^{n_2}}\right)^p\right)^{1/p}=\frac{d^{1/p}}{2^{n_2}}\quad \text{and}\quad \|V^{\mathrm{in}}-\tilde{V}^{\mathrm{in}}\|_{p}\leq\frac{d^{1/p}}{2^{n_2}}.
\end{equation}
\vspace{.3cm}

\noindent $\bullet$ \textbf{Step C (Derivation of uniform mean-field limit).} Next, we show that for any $t>0$, $\{\mu_t^n\}_{n=1}^\infty$ is a Cauchy sequence. Let $(X(t), V(t)),(\tilde{X}(t), \tilde{V}(t))\in \ell^{p,p}\times \ell^{p,p}$ be global solutions to \eqref{C-1} with initial data $(X^{\mathrm{in}}, V^{\mathrm{in}})$ and $(\tilde{X}^{\mathrm{in}}, \tilde{V}^{\mathrm{in}})$, respectively. \newline

It follows from \eqref{D-4} and \eqref{D-6} that we can choose the transport measure $\pi_t\in \mathcal{P}(\bbr^{2d}\times\bbr^{2d})$ of $\mu_t^{n_1}$ and $\mu_t^{n_2}$ as 
\begin{equation*}
	\pi_t := \sum_{i=1}^\infty m_i \delta_{x_i(t)}\otimes \delta_{v_i(t)}\otimes \delta_{\tilde{x}_i(t)}\otimes \delta_{\tilde{v}_i(t)}.
\end{equation*}
Then we have
\begin{align*}
	W_p^p(\mu_t^{n_1},\mu_t^{n_2}) &= W_p^p\left(\sum_{i
	=1}^\infty m_i \delta_{x_{i}(t)}\otimes \delta_{v_{i}(t)},\sum_{i=1}^\infty m_i \delta_{\tilde{x}_{i}(t)}\otimes \delta_{\tilde{v}_{i}(t)}\right)\\
	&\leq \int_{\bbr^{2d}\times \bbr^{2d}}|(x,v)-(\tilde{x},\tilde{v})|_p^p~d\pi_t(x,v,\tilde{x},\tilde{v})\\
	&=\sum_{i=1}^\infty m_i(|x_i(t)-\tilde{x}_i(t)|_p^p+|v_i(t)-\tilde{v}_i(t)|_p^p)\\
	&=\|X(t)-\tilde{X}(t)\|_p^p+\|V(t)-\tilde{V}(t)\|_p^p.
\end{align*}
Now, since \eqref{D-5-1} and \eqref{D-5-2} hold, we apply Theorem \ref{T2.1} to find 
\begin{equation} \label{D-7-1}
	W_p(\mu_t^{n_1},\mu_t^{n_2}) \leq \|X(t)-\tilde{X}(t)\|_p+\|V(t)-\tilde{V}(t)\|_p \leq C(\|X^{\mathrm{in}}-\tilde{X}^{\mathrm{in}}\|_{p}+ \|V^{\mathrm{in}}-\tilde{V}^{\mathrm{in}}\|_{p}).
\end{equation}
Since
\[\|X_{\mu_0^{n}}\|_{p}\to \|X_{\mu_0}\|_{p} \quad \text{and}\quad  \|V_{\mu_0^{n}}\|_{p}\to \|V_{\mu_0}\|_{p}\quad \mbox{as $n \to \infty$},\]
the quantities $\|X_{\mu_0^n}\|_p$ and $\|V_{\mu_0^n}\|_p$ are uniformly bounded for all sufficiently large $n$. Moreover, as in Step B of Subsection \ref{sec:3.2}, the constant $C>0$ in \eqref{D-7-1} only depends on the uniform bound of $\|X_{\mu_0^n}\|_{p}, \|V_{\mu_0^n}\|_{p}$. Hence, for all sufficiently large $n_1$ and $n_2$, we can choose $C>0$ in \eqref{D-7-1} independently of $n_1$ and $n_2$. Then, we use \eqref{D-7} to find
\[W_{p}(\mu_t^{n_1},\mu_t^{n_2})\leq \frac{Cd^{1/p}}{2^{n_2-1}}.\]
Therefore, for any $\e>0$, there exists $N\in\bbn$ such that for $n_1,n_2\geq N$, we have
\begin{equation} \label{D-8}
	 W_p(\mu_t^{n_1},\mu_t^{n_2})\leq \e,\quad \forall~t\geq0.
\end{equation}
This implies that for each $t\geq 0$, $\{\mu_t^n\}_{n=1}^\infty$ is a Cauchy sequence in $W_p$-metric, and there exists $\mu_t\in \mathcal P_p(\mathbb R^{2d})$ such that
\[W_p(\mu_t^n,\mu_t)\to 0 \qquad \mbox{as } n\to\infty.\]
Moreover, the relation \eqref{D-8} gives that if $n\geq N$, then
\[\sup_{t\in[0,\infty)}W_p(\mu_t^n, \mu_t)\leq \e.\]
Since $\e>0$ was arbitrary, we have
\[\lim_{n\to\infty} \sup_{t\in[0,\infty)}W_p(\mu_t^n,\mu_t)=0.\]

It remains to verify that the limit $\mu_t$ is a measure-valued solution
to \eqref{D-1}. Let $z=(x,v)$, and recall
\[
\bw[\mu]:=\int_{\bbr^{2d}\times\bbr^{2d}}
|x-x'|_p\,\mu(dz)\mu(dz'),\quad L[\mu](z):=-\kappa\phi(\bw[\mu])\left(v-\int_{\bbr^{2d}}v_*\mu(dz_*)\right).
\]
For each $s\geq 0$, let $\pi_s^n\in\Pi(\mu_s^n,\mu_s)$ be an optimal coupling
for $W_p$. Then we have
\[
\begin{aligned}
	|\bw[\mu_s^n]-\bw[\mu_s]|
	&\leq
	\int_{\bbr^{4d}}\int_{\bbr^{4d}}
	\Bigl|
	|x-x'|_p-|\bar x-\bar x'|_p
	\Bigr|
	\,\pi_s^n(dz,d\bar z)\pi_s^n(dz',d\bar z')  \\
	&\leq
	\int_{\bbr^{4d}}\int_{\bbr^{4d}}
	\bigl(|x-\bar x|_p+|x'-\bar x'|_p\bigr)
	\,\pi_s^n(dz,d\bar z)\pi_s^n(dz',d\bar z')  \\
	&=
	2\int_{\bbr^{4d}} |x-\bar x|_p\,\pi_s^n(dz,d\bar z)
	\leq 2W_p(\mu_s^n,\mu_s).
\end{aligned}
\]
Moreover, since 
\[\int_{\bbr^{2d}}v_*\mu_s^n(dz_*)=\int_{\bbr^{2d}}v_*\mu_s(dz_*)=0,\]
for every compact set $K\subset\mathbb R^{2d}$,
\[
\sup_{z\in K}|L[\mu_s^n](z)-L[\mu_s](z)|
\leq C_K W_p(\mu_s^n,\mu_s),
\]
where $C_K>0$ is independent of $n$ and $s$.

Now let $\varphi\in \cC_c^1([0,T)\times \bbr^{2d})$. Since each $\mu_t^n$ is a
measure-valued solution, we have
\begin{align*}
	\begin{aligned}
		& \int_{\bbr^{2d}}\varphi (t,z)\mu_t^n(dz) \\
		& \hspace{0.5cm} = \int_{\bbr^{2d}}\varphi (0,z)\mu_0^n(dz) + \int_0^t \int_{\bbr^{2d}}(\partial_s\varphi +v\cdot \nabla_x \varphi +L[\mu^n_s]\cdot \nabla_v\varphi)\mu^n_s(dz)~	ds.
	\end{aligned}
\end{align*}
Using the convergence $\mu_s^n\to\mu_s$ in $W_p$, the above estimate for
$L[\mu_s^n]-L[\mu_s]$, and the compact support of $\varphi$, we can pass to the limit
$n\to\infty$ in the above weak formulation. Thus,
\begin{align*}
	\begin{aligned}
		& \int_{\bbr^{2d}}\varphi (t,z)\mu_t(dz) \\
		& \hspace{0.5cm} = \int_{\bbr^{2d}}\varphi (0,z)\mu_0(dz) + \int_0^t \int_{\bbr^{2d}}(\partial_s\varphi +v\cdot \nabla_x \varphi +L[\mu_s]\cdot \nabla_v\varphi)\mu_s(dz)~	ds.
	\end{aligned}
\end{align*}
Moreover, the weak continuity of $t\mapsto\mu_t$ follows from $\sup_{t\in[0,\infty)}W_p(\mu_t^n,\mu_t)\to0$ and the weak continuity of
$t\mapsto\mu_t^n$. Hence $\mu_t$ is a measure-valued solution to \eqref{D-1}
in the sense of Definition \ref{D4.1}.

This completes the proof of Theorem \ref{T2.2}.
\subsection{Proof of Theorem \ref{T2.3}}\label{sec:4.2}
Before we discuss the final result on the uniform-in-time stability of the KCS-type model, we recall Wasserstein-type distance $W_{p}$ as follows:
\begin{equation} \label{D-9}
	W_{p}(\mu,\nu)=\inf\limits_{\pi\in\Pi(\mu,\nu)}\left\{\int_{\bbr^{4d}} \Big( |x-\tilde{x}|_p^p+|v-\tilde{v}|_p^p\Big) d\pi(x,v,\tilde{x},\tilde{v})\right\}^{\frac{1}{p}}.
\end{equation}
Next, we present the proof of the main theorem dealing with the uniform-in-time stability result of \eqref{D-1}. We split the proof into two steps.\newline

\noindent $\bullet$ \textbf{Step A (Construction approximation sequences of two measures):} Let $\e>0$ be given. Using Theorem \ref{T2.2} and \eqref{D-3}, we first approximate the initial data $\mu_0$ and $\nu_0$: there exists large $n\in\bbn$ such that the solutions $\mu_t^n$, $\nu_t^n$ of \eqref{D-1} with initial data
\[\mu_0^{n}:=\sum_{i=1}^\infty m^n_i\delta_{x_{i}^{n,\mathrm{in}}}\otimes \delta_{v_{i}^{n,\mathrm{in}}},\quad \nu_0^{n}:=\sum_{i=1}^\infty \tilde{m}_i^n \delta_{\tilde{x}_{i}^{n,\mathrm{in}}}\otimes \delta_{\tilde{v}_{i}^{n,\mathrm{in}}}\]
satisfy
\begin{equation*}
\sup_{t\in[0,\infty)}	W_p(\mu_t, \mu_t^n)<\frac{\e}{2},\quad \sup_{t\in[0,\infty)}W_p(\nu_t, \nu_t^n)<\frac{\e}{2},
\end{equation*}
with $(X^{n,\mathrm{in}}, V^{n,\mathrm{in}}), (\tilde{X}^{n,\mathrm{in}}, \tilde{V}^{n,\mathrm{in}})\in \ell^{p,p}\times  \ell^{p,p}$.
Then by triangle inequality, we have 
\begin{align} \label{D-10}
\begin{aligned}
	W_p(\mu_t,\nu_t)\leq W_p(\mu_t^n,\nu_t^n)+\e,\quad \forall~t>0.
	\end{aligned}
\end{align}
Now, using \eqref{D-9}, we can find an optimal plan $(m_{\ell,k})_{\ell,k\in\bbn}$ whose entries are nonnegative real numbers and it satisfies 
\begin{equation}\label{D-11}
\sum_{\ell=1}^\infty\sum_{k=1}^\infty m_{\ell,k}|x_{\ell}^{n,\mathrm{in}}-\tilde{x}_{k}^{n,\mathrm{in}}|_p^{p}+\sum_{\ell=1}^\infty\sum_{k=1}^\infty m_{\ell,k}|v_{\ell}^{n,\mathrm{in}}-\tilde{v}_{k}^{n,\mathrm{in}}|_{p}^{p}=W_{p}^{p}(\mu_0^n, \nu_0^n),
\end{equation}
where 
\begin{equation*}
	\sum_{k=1}^\infty m_{\ell,k}=m^n_\ell,	\quad \sum_{\ell=1}^\infty m_{\ell,k}=\tilde{m}^n_k.
\end{equation*}
Choose a bijection $\sigma: \bbn\to\bbn^2$ with $\sigma(j):=(\ell_j, k_j)$. For $m_{\ell_j,k_j}>0$, we define 
\begin{equation} \label{D-12}
	\mathbf{m}^n_j:=m_{\ell_j,k_j},\quad y_{j}^{n,\mathrm{in}}:=x_{\ell_j}^{n,\mathrm{in}},\quad \tilde{y}_{j}^{n,\mathrm{in}}:=\tilde{x}_{k_j}^{n,\mathrm{in}},\quad u_{j}^{n,\mathrm{in}}:=v_{\ell_j}^{n,\mathrm{in}},\quad \tilde{u}_{j}^{n,\mathrm{in}}:=\tilde{v}_{k_j}^{n,\mathrm{in}}.
\end{equation}
Then we have 
\[ (Y^{n,\mathrm{in}}, U^{n,\mathrm{in}}), (\tilde{Y}^{n,\mathrm{in}}, \tilde{U}^{n,\mathrm{in}})\in \ell^{p,p}\times \ell^{p,p}, \]
where we use the weights $\{\mathbf{m}^n_j\}_{j=1}^\infty$. For instance, we have 
\[
	\sum_{j=1}^\infty \mathbf{m}^n_j|y_j^{n,\mathrm{in}}|_p^p =\sum_{\ell, k=1}^\infty m_{\ell,k}|x_\ell^{n,\mathrm{in}}|_p^p =\sum_{\ell=1}^\infty m^n_\ell |x_\ell^{n,\mathrm{in}}|_p^p<\infty.
\]
Note that the relations \eqref{D-11} and \eqref{D-12} yield
\begin{align} \label{D-13}
	\begin{aligned}
	&\|Y^{n,\mathrm{in}}-\tilde{Y}^{n,\mathrm{in}}\|_{p}^{p}+ \|U^{n,\mathrm{in}}-\tilde{U}^{n,\mathrm{in}}\|_{p}^{p}\\
	& \hspace{1cm} =\sum_{i=1}^\infty \textbf{m}^n_{i}|y_{i}^{n,\mathrm{in}}-\tilde{y}_{i}^{n,\mathrm{in}}|_p^{p}+\sum_{i=1}^\infty \textbf{m}^n_{i} |u_{i}^{n,\mathrm{in}}-\tilde{u}_{i}^{n,\mathrm{in}}|_{p}^{p}\\
	& \hspace{1cm} =\sum_{\ell=1}^\infty\sum_{k=1}^\infty m_{\ell,k}|x_{\ell}^{n,\mathrm{in}}-\tilde{x}_{k}^{n,\mathrm{in}}|_p^{p} +\sum_{\ell=1}^\infty\sum_{k=1}^\infty m_{\ell,k}|v_{\ell}^{n,\mathrm{in}}-\tilde{v}_{k}^{n,\mathrm{in}}|_{p}^{p}=W_{p}^{p}(\mu_0^n, \nu_0^n).
	\end{aligned}
\end{align}
Now, let $(Y^n(t), U^n(t))$ and $(\tilde{Y}^n(t), \tilde{U}^n(t))\in \ell^{p,p}\times \ell^{p,p}$ be a global solution to \eqref{C-1} with initial data $(Y^{n,\mathrm{in}}, U^{n,\mathrm{in}})$ and $(\tilde{Y}^{n,\mathrm{in}}, \tilde{U}^{n,\mathrm{in}})$, respectively. Since the empirical measures are constructed as in the proof of Theorem \ref{T2.2}, we have
\[\sum_{j=1}^{\infty}\mathbf{m}_j^n u_j^{n,\mathrm{in}} = \sum_{\ell,k=1}^{\infty}m_{\ell,k}^nv_\ell^{n,\mathrm{in}}=\sum_{\ell=1}^\infty m_\ell^n v_\ell^{n,\mathrm{in}}=0,\]
and similarly
\[\sum_{j=1}^{\infty}\mathbf{m}_j^n \tilde{u}_j^{n,\mathrm{in}}=0.\]
We can also rewrite the approximate empirical measures as 
\begin{equation} \label{D-14}
	\mu_t^n=\sum_{j=1}^\infty \mathbf{m}^n_j \delta_{y^n_j(t)}\otimes \delta_{u^n_j(t)} \quad\text{and} \quad \nu_t^n= \sum_{j=1}^\infty \mathbf{m}^n_j\delta_{\tilde{y}^n_j(t)}\otimes \delta_{\tilde{u}^n_j(t)}. 	
\end{equation}
\vspace{.3cm}

\noindent $\bullet$ \textbf{Step B (Derivation of uniform-in-time stability results):} We choose the transport measure $\pi_t\in \mathcal{P}(\bbr^{2d}\times\bbr^{2d})$ of $\mu_t^{n}$ and $\nu_t^{n}$ as 
\begin{equation}\label{D-15}
	\pi_t = \sum_{i=1}^\infty \textbf{m}^n_i \delta_{y^n_i(t)}\otimes \delta_{u^n_i(t)}\otimes \delta_{\tilde{y}^n_i(t)}\otimes \delta_{\tilde{u}^n_i(t)}.
\end{equation}
Now, we use \eqref{D-14} and \eqref{D-15} to find
\begin{align*}
	W_p^p(\mu_t^{n},\nu_t^{n}) &= W_p^p\left(\sum_{i
	=1}^\infty m_i^n \delta_{x_{i}^n(t)}\otimes \delta_{v_{i}^n(t)},\sum_{i=1}^\infty \tilde{m}^n_i \delta_{\tilde{x}^n_{i}(t)}\otimes \delta_{\tilde{v}^n_{i}(t)}\right)\\
		&\leq \int_{\bbr^{4d}}|(x,v)-(\tilde{x},\tilde{v})|_p^p~d\pi_t(x,v,\tilde{x},\tilde{v})\\
	&=\sum_{i=1}^\infty \textbf{m}^n_i\left(|y^n_i(t)-\tilde{y}^n_i(t)|_p^p+|u^n_i(t)-\tilde{u}^n_i(t)|_p^p\right)\\
	&=\|Y^n(t)-\tilde{Y}^n(t)\|_p^p+\|U^n(t)-\tilde{U}^n(t)\|_p^p.
\end{align*}
Then, Theorem \ref{T2.1} and \eqref{D-13} yield
\begin{align*}
	W_p(\mu_t^{n},\nu_t^{n})&\leq\left( \|Y^n(t)-\tilde{Y}^n(t)\|_p^p+\|U^n(t)-\tilde{U}^n(t)\|_p^p\right)^{\frac{1}{p}}\\
	&\leq\|Y^n(t)-\tilde{Y}^n(t)\|_p+\|U^n(t)-\tilde{U}^n(t)\|_p\\
&	\leq C(\|Y^{n,\mathrm{in}}-\tilde{Y}^{n,\mathrm{in}}\|_{p}+ \|U^{n,\mathrm{in}}-\tilde{U}^{n,\mathrm{in}}\|_{p})\\
&\le2^{1-\frac{1}{p}} C(\|Y^{n,\mathrm{in}}-\tilde{Y}^{n,\mathrm{in}}\|_{p}^p+ \|U^{n,\mathrm{in}}-\tilde{U}^{n,\mathrm{in}}\|_{p}^p)^{\frac{1}{p}}\\
&=2^{1-\frac{1}{p}}CW_{p}(\mu_0^n, \nu_0^n),
\end{align*}
where we used the relation:
\[ a+b\leq 2^{1-\frac{1}{r}}(a^r+b^r)^{\frac{1}{r}} \quad \mbox{for $a,b>0$ and $r\geq 1$}. \]
Note that as in the proof of Theorem \ref{T2.2}, we can choose $C>0$ independent of $n$. 
Therefore, we use \eqref{D-10} to get
\begin{equation*}
	W_p(\mu_t,\nu_t)\le W_p(\mu_t^{n},\nu_t^{n}) +\e\leq 2^{1-\frac{1}{p}}CW_{p}(\mu^n_0, \nu^n_0)+\e.
\end{equation*}
Note that Theorem \ref{T2.2} and the triangle inequality give
\[W_p(\mu_0^n, \nu_0^n)\to W_p(\mu_0, \nu_0)\quad \text{as}\quad n\to\infty.\] 
Since $\e>0$ is arbitrary, we conclude the desired result:
\begin{equation*}
	\sup_{t\geq 0}W_p(\mu_t,\nu_t)\leq 2^{1-\frac{1}{p}}CW_{p}(\mu_0, \nu_0).
\end{equation*}

\section{Conclusion}\label{sec:5}
\setcounter{equation}{0} 
In this paper, we have established uniform-in-time stability for the ICS-type model and the KCS-type model in a noncompact setting. To explain the difficulty in the derivation of the desired uniform estimates, we first identified two fundamental obstructions in the classical framework: the velocity diameter of the original ICS model may remain constant in time, and the original KCS model may fail to satisfy uniform-in-time stability in the noncompact spatial setting. These observations illustrate the necessity of introducing a modified communication weight based on a pairwise spatial moment, which is our KCS-type model. To overcome the analytical difficulties, we introduced the SDDI, which enables us to derive quantitative stability estimates uniform in time along the ICS-type model. This uniform stability estimate further led to the uniform-in-time mean-field limit of the corresponding particle model and the corresponding stability theory for the derived KCS-type model. The presented uniform estimates provide a unified framework for analyzing long-time robustness of flocking dynamics in an extended spatial regime. Of course, there are several interesting questions to be answered. For example, our presented results are purely deterministic and continuous in time. Hence, whether the presented results are robust with respect to time-discretization and stochastic perturbations will be interesting questions. We leave these interesting issues for a future work.

\appendix

\vspace{1cm}

\section{Proof of Lemma \ref{L2.1}}\label{App-A}
\setcounter{equation}{0} 
\noindent (1) We use \eqref{B-2} and the index change trick $i\leftrightarrow j$ to get 
\[\sum_{i=1}^\infty m_i\dot{v}_i = \kappa \sum_{i,j=1}^\infty m_im_j \phi(|x_j-x_i|_p)(v_j-v_i)=0.\]
(2) For $q=\infty$, it follows from Proposition \ref{P2.1}. Now we define $G:\bbr^d\to \bbr$ as $G(v):=|v|_p^q$. Then we can write
\begin{equation} \label{A.1}
	\|V(t)\|_{q,p}^q=\sum_{i=1}^\infty m_iG(v_i).
\end{equation}
Since the map $G:~v\mapsto |v|_p^q$ is convex, we have 
\begin{equation} \label{A.2}
	(\nabla G(x)-\nabla G(y))\cdot (x-y) \geq 0,\quad \forall~x,y\in \bbr^d.
\end{equation}
Now, we differentiate \eqref{A.1} to get
\begin{align*}
\begin{aligned}
\frac{d}{dt}\left(\sum_{i=1}^\infty m_iG(v_i)\right) &=\sum_{i=1}^\infty m_i\nabla G(v_i)\cdot \dot{v}_i =\kappa\sum_{i,j=1}^\infty m_i\nabla G(v_i)\cdot m_j\phi(|x_i-x_j|_p)(v_j-v_i)\\
&=-\frac{\kappa}{2} \sum_{i,j=1}^\infty m_im_j\phi(|x_i-x_j|_p)(\nabla G(v_i)-\nabla G(v_j))\cdot(v_i-v_j)\leq 0.
\end{aligned}
\end{align*}
Therefore, $\|V(t)\|_{q,p}^q$ is non-increasing, and so is $\|V(t)\|_{q,p}$. Next, we claim 
\begin{equation} \label{A.3}
	\frac{d}{dt}\|X\|_{q,p}\leq \|V\|_{q,p}.
\end{equation}
Since $\frac{dx_i^k}{dt} = v_i^k,$ we have
\begin{align*}
	\frac{d}{dt}|x_i^k|=v_i^k \cdot\mathrm{sgn}(x_i^k)\leq |v_i^k|.
\end{align*}
Then, we use H\"older's inequality to derive
\begin{align*}
\begin{aligned}
	p|x_i|_p^{p-1}\frac{d}{dt}|x_i|_p &=\frac{d}{dt}|x_i|_p^p \leq p\sum_{k=1}^d |x_i^k|^{p-1}|v_i^k| \\
	&\leq p\left(\sum_{k=1}^d |x_i^k|^p\right)^{\frac{p-1}{p}}\left(\sum_{k=1}^d |v_i^k|^p\right)^{\frac{1}{p}} = p|x_i|_p^{p-1}|v_i|_p.
\end{aligned}
\end{align*}
This yields
\begin{equation} \label{A.4}
\Big| \frac{d}{dt}|x_i|_p \Big| \leq |v_i|_p.
\end{equation}
We again use H\"older's inequality to find 
\begin{align*}
\begin{aligned}
\Big| q\|X\|_{q,p}^{q-1}\frac{d}{dt}\|X\|_{q,p} \Big| &= \Big| \frac{d}{dt}\|X\|_{q,p}^q \Big| = p \Big| \sum_{i=1}^\infty m_i|x_i|_p^{q-1}\frac{d}{dt}|x_i|_p \Big| \\
&\leq q\sum_{i=1}^\infty m_i|x_i|_p^{q-1}|v_i|_p \leq q\left(\sum_{i=1}^\infty m_i |x_i|_q^q\right)^{\frac{q-1}{q}}\left(\sum_{i=1}^\infty m_i|v_i|_p^q\right)^{\frac{1}{q}} \\
& =q\|X\|^{q-1}_{q,p}\|V\|_{q,p}.
\end{aligned}
\end{align*}
We divide both sides by $q\|X\|_{q,p}^{q-1}$, which proves our claim \eqref{A.3}. We combine \eqref{A.3} and $\|V(t)\|_{q,p}\le \|V^{\mathrm{in}}\|_{q,p}$ to get
\begin{equation*}
	\frac{d}{dt}\|X\|_{q,p}\leq \|V^{\mathrm{in}}\|_{q,p}.
\end{equation*}
We integrate both sides of the above relation from $0$ to $t$ to get the desired result.\\\\

\noindent (3) By \eqref{A.4}, we have
\[\frac{d}{dt}\sum_{i=1}^\infty m_i e^{\alpha|x_i|_p}\leq \sum_{i=1}^\infty m_i e^{\alpha |x_i|_p}\alpha |v_i|_p \leq \alpha \cP_\infty \sum_{i=1}^\infty m_i e^{\alpha|x_i|_p}.\]
This leads to the desired estimate:
\[\sum_{i=1}^\infty m_i e^{\alpha |x_i|_p}\leq e^{\alpha \cP_\infty t}\sum_{i=1}^\infty m_i e^{\alpha |x_i^{\mathrm{in}}|_p}.\]

\vspace{1cm}

\section{Proof of Proposition \ref{P2.4}}\label{App-B}
\setcounter{equation}{0} 
Note that 
\begin{equation*}
	\|V(t)\|_{q,p}^q=\sum_{i=1}^\infty m_iG(v_i).
\end{equation*}
Then, we use Lemma \ref{L2.3} to derive
\begin{align} \label{B.1}
	\begin{aligned}
		\frac{d}{dt}\sum_{i=1}^\infty m_i|v_i|_p^q &= -\frac{\kappa}{2} \sum_{i,j=1}^\infty m_im_j\phi(|x_i-x_j|_p)(\nabla G(v_i)-\nabla G(v_j))\cdot(v_i-v_j)\\
		&\leq -\frac{\kappa c_{q}}{2}\sum_{i,j=1}^\infty m_im_j\phi(|x_i-x_j|_p)|v_i-v_j|_p^q\\
		&\leq -\frac{\kappa c_{q}}{2}\phi(2R(t))\sum_{i,j\in \Lambda_t}m_im_j|v_i-v_j|_p^q.
	\end{aligned}
\end{align}
We use 
\[\sum_{i,j\in \Lambda_t} = \sum_{i,j=1}^\infty -2\sum_{i=1}^\infty\sum_{j\in \Lambda_t^c}+\sum_{i,j\in \Lambda_t^c}\]
on \eqref{B.1} to get
\begin{align}
\begin{aligned} \label{B.2}
\frac{d}{dt}\sum_{i=1}^\infty m_i|v_i|_p^q &\leq -\frac{\kappa c_{q}}{2}\phi(2R(t))\sum_{i,j=1}^\infty m_im_j|v_i-v_j|_p^q \\
&\hspace{0.5cm} +\kappa c_{q}\phi(2R(t))\sum_{i=1}^\infty\sum_{j\in \Lambda_t^c}m_i m_j|v_i-v_j|_p^q.
\end{aligned}
\end{align}
Now, since $w\mapsto |w|_p^q$ is convex with 
\[ \sum_{j=1}^\infty m_j=1 \quad \mbox{and} \quad \sum_{j=1}^\infty m_jv_j=0, \]
we use Jensen's inequality to get
\[
	\sum_{i=1}^\infty m_i |v_i|_p^q  = \sum_{i=1}^\infty m_i\left|\sum_{j=1}^\infty  m_j(v_i-v_j)\right|_p^q \leq\sum_{i,j=1}^\infty m_i m_j |v_i-v_j|_p^q.
\]
This implies
\begin{equation} \label{B.3}
	-\frac{\kappa c_{q}}{2}\phi(2R(t))\sum_{i,j=1}^\infty m_im_j|v_i-v_j|_p^q \leq -\frac{\kappa c_{q}}{2}\phi(2R(t))\sum_{i=1}^\infty m_i|v_i|_p^q.
\end{equation}
Next, we use $|v_i-v_j|_p^q \leq 2^{q-1}(|v_i|_p^q+|v_j|_p^q)$ to derive
\begin{align}\label{B.4}
	\begin{aligned}
		& \sum_{i=1}^\infty \sum_{j\in \Lambda_t^c}m_im_j|v_i-v_j|_p^q \\
		&  \hspace{1cm} \leq 2^{q-1}\left(\sum_{j\in \Lambda_t^c}m_j \right)\sum_{i=1}^\infty m_i|v_i|_p^q+2^{q-1}\sum_{j\in \Lambda_t^c}m_j|v_j|_p^q\\
		&  \hspace{1cm} \leq 2^{q-1}\left(\sum_{j\in \Lambda_t^c}m_j\right)\sum_{i=1}^\infty m_i|v_i|_p^q + 2^{q-1}P_\infty^q\sum_{j\in \Lambda_t^c}m_j.
	\end{aligned}
\end{align}
We combine \eqref{B.2}, \eqref{B.3}, and \eqref{B.4} to derive
\begin{align*}
	\frac{d}{dt}\sum_{i=1}^\infty m_i|v_i|_p^q \leq -\frac{\kappa c_{q}}{2}\phi(2R(t))\left(1-2^q\sum_{j\in \Lambda_t^c}m_j\right)\sum_{i=1}^\infty m_i|v_i|_p^q +2^{q-1}c_q\kappa P_\infty^q\phi(2R(t)) \sum_{j\in \Lambda_t^c}m_j.
\end{align*}
We choose $R(t)$ to satisfy
\[\sum_{j\in \Lambda_0^c}m_j\leq \frac{1}{2^{q+1}}.\]
This leads to 
\begin{equation} \label{B.5}
	\frac{d}{dt}\sum_{i=1}^\infty m_i|v_i|_p^q \leq -\frac{\kappa c_{q}}{4}\phi(2R(t))\sum_{i=1}^\infty m_i|v_i|_p^q +2^{q-1}c_q \kappa P_\infty^q \phi(2R(t))\sum_{j\in \Lambda_t^c}m_j.
\end{equation}
Now, we apply Gr\"onwall's lemma for \eqref{B.5} to derive
\begin{align} \label{B.6}
	\begin{aligned}
		\sum_{i=1}^\infty m_i |v_i(t)|_p^q&\leq \exp\left({-\frac{\kappa c_{q}}{4}\int_0^t\phi(2R(s))ds}\right)\|V^\mathrm{in}\|_{q,p}^q\\
		&\quad +2^{q-1}c_q\kappa P_\infty^q\int_0^t \exp\left(-\frac{\kappa c_{q}}{4}\int_s^t\phi(2R(u))du\right)\phi(2R(s))\sum_{j\in \Lambda_s^c}m_j~ds.
	\end{aligned}
\end{align}
Recall 
\begin{equation*}
	\phi(r) = \frac{1}{(1+r^2)^{\beta/2}}, \quad R(t) \simeq (1+t)^\gamma,\quad \sum_{j\in \Lambda_t^c}m_j\lesssim (1+t)^{-\bar{q}(\gamma-1)},
\end{equation*}
which gives
\[\phi(2R(t))\simeq (1+t)^{-\beta\gamma}.\]
First, since $\beta\gamma<1$, we have
\begin{equation}	\label{B.7}
	\exp\left({-\frac{\kappa c_{q}}{4}\int_0^t\phi(2R(s))ds}\right)\lesssim \exp\left(-C(1+t)^{1-\beta\gamma}\right),
\end{equation}
where $C>0$ is a constant. 
For the latter part of \eqref{B.6}, since
\[\int_s^t\phi(2R(u))du \gtrsim\int_s^t (1+u)^{-\beta\gamma}du\geq (t-s)(1+t)^{-\beta\gamma},\]
we have
\begin{align*}
	&\int_0^t \exp\left(-\frac{\kappa c_{q}}{4}\int_s^t\phi(2R(u))du\right)\phi(2R(s))\sum_{j\in \Lambda_s^c}m_j~ds\\
	& \hspace{1cm} \lesssim \int_0^t \exp\left(-c(t-s)(1+t)^{-\beta\gamma}\right)(1+s)^{-\beta\gamma-\bar{q}(\gamma-1)}ds\qquad \\
	& \hspace{1cm} =\int_0^t \exp\left(-cr(1+t)^{-\beta\gamma}\right)(1+t-r)^{-\beta\gamma -\bar{q}(\gamma-1)}dr,
\end{align*}
where $r=t-s$. \newline

Note that
\begin{align} \label{B.8}
	\begin{aligned}
		&\int_{\frac{t}{2}}^t\exp\left(-cr(1+t)^{-\beta\gamma}\right)\underbrace{(1+t-r)^{-\beta\gamma-\bar{q}(\gamma-1)}}_{\leq 1}dr \\
		& \hspace{1cm} \leq \int_{\frac{t}{2}}^\infty  \exp\left(-cr(1+t)^{-\beta\gamma}\right)dr  \lesssim(1+t)^{\beta\gamma}\exp\left(-c'(1+t)^{1-\beta\gamma}\right),
	\end{aligned}
\end{align}
where $c'>0$ is a constant. 
Also, since $1+t-r\geq 1+\frac{t}{2}$ for  $r\in\left[0,\frac{t}{2}\right]$, we have
\begin{align}\label{B.9}
	\begin{aligned}
		&\int_{0}^{\frac{t}{2}}\exp\left(-cr(1+t)^{-\beta\gamma}\right)(1+t-r)^{-\beta\gamma-\bar{q}(\gamma-1)}dr\\
		& \hspace{1cm} \lesssim (1+t)^{-\beta\gamma-\bar{q}(\gamma-1)}\int_{0}^\infty \exp\left(-cr(1+t)^{-\beta\gamma}\right)dr\\
		& \hspace{1cm} \lesssim (1+t)^{-\beta\gamma-\bar{q}(\gamma-1)}(1+t)^{\beta \gamma} = (1+t)^{-\bar{q}(\gamma-1)}.
	\end{aligned}
\end{align}
We combine \eqref{B.6}, \eqref{B.7}, \eqref{B.8}, and \eqref{B.9} to get 
\begin{align*}
	\|V(t)\|_{q,p}^q\lesssim (1+t)^{-\bar{q}(\gamma-1)}.
\end{align*} 
Therefore, we have
\begin{equation*}
	\|V(t)\|_{q,p}\lesssim (1+t)^{-\frac{\bar{q}}{q}(\gamma-1)}.
\end{equation*}

\vspace{1cm}

\section{Proof of Proposition \ref{P3.1}}\label{App-C}
\setcounter{equation}{0} 
\noindent We first define energy functionals $\mathcal{E}_{\pm}$:
\begin{align*}
\mathcal{E}_{\pm}:=	\|V\|_{q,p}\pm\frac{\kappa \tilde{c}_q}{4} \int_{2\|X^{\mathrm{in}}\|_{q,p}}^{2\|X\|_{q,p}}\phi(s)d s.
\end{align*}
By Lemma \ref{L3.1} we know 
\[\mathcal{E}_{\pm}\le	\mathcal{E}_{\pm}^{\mathrm{in}} = \|V^{\mathrm{in}}\|_{q,p}. \]
This implies 
\[\frac{\kappa \tilde{c}_q}{4} \int_{2\|X^{\mathrm{in}}\|_{q,p}}^{2\|X\|_{q,p}}\phi(s)d s\le\|V^{\mathrm{in}}\|_{q,p}, \]
and there exists an $x_{q,p}^{\infty}<\infty$ such that 
\[\|X\|_{q,p}<x_{q,p}^{\infty}.\]
Here, $x_{q,p}^{\infty}$ is the largest positive number such that 
\[\|V^{\mathrm{in}}\|_{q,p}=\frac{\kappa \tilde{c}_q}{4} \int_{2\|X^{\mathrm{in}}\|_{q,p}}^{2x_{q,p}^{\infty}}\phi(s)d s.\]
The velocity alignment follows from $\eqref{SDDI}_2$ and $\|X\|_{q,p}<x_{q,p}^{\infty}$.

\end{document}